\newtheorem{theorem}{Theorem}
\newtheorem*{acknow}{Acknowledgement}
\newtheorem{corollary}[theorem]{Corollary}
\newtheorem{lemma}[theorem]{Lemma}
\newcommand{\N}{\mathbb{N}}
\begin{document}

\title[Critical BPRE with immigration:
survival of a single family]{Critical branching processes in random environment with immigration:
survival of a single family}
\author{C. Smadi}
\address{Charline Smadi, Department of Discrete Mathematics, Steklov Mathematical Institute of Russian Academy of Sciences, 8 Gubkin
Street, 117 966 Moscow GSP-1, Russia}
\email{charline.smadi@irstea.fr}
\author{V. A. Vatutin}
\address{Vladimir A. Vatutin, Department of Discrete Mathematics, Steklov Mathematical Institute of Russian Academy of Sciences, 8 Gubkin
Street, 117 966 Moscow GSP-1, Russia}
\email{vatutin@mi.ras.ru}

\maketitle

\begin{abstract}
We consider a critical branching process in an i.i.d. random environment, in which 
one immigrant arrives at each generation. We are interested in the event $\mathcal{A}_i(n)$ that 
all individuals alive at time $n$ are offspring of the immigrant which joined the population at time $i$.
We study the asymptotic probability of this event when $n$ is large and $i$ follows 
different asymptotics which may be related to $n$ ($i$ fixed, close to $n$, or going to infinity but far from $n$).
In order to do so, we establish some
conditional limit theorems for random walks, which are of independent interest.\\

\noindent \textbf{AMS 2000 subject classifications.} Primary 60J80; Secondary 60G50.\\

\noindent \textbf{Keywords.} Branching process, random environment, immigration, random walk, conditioned random walk

\end{abstract}

\section{Introduction and main result}

We consider a branching process with immigration evolving in a random
environment. Individuals in such a process reproduce independently of each other according to
random offspring distributions which vary from one generation to the other.
In addition, an immigrant enters the population at each generation. To give
a formal definition let $\Delta $ be the space of all probability measures
on $\mathbb{N}_{0}:=\{0,1,2,\ldots \}.$ Equipped with the
metric of total variation $\Delta $ becomes a Polish space. Let $F$ be a
random variable taking values in $\Delta $, and let $F_{n},n\in \mathbb{N}:=%
\mathbb{N}_{0}\backslash \left\{ 0\right\} $ be a sequence of independent
copies of $F$. The infinite sequence $\mathcal{E}=\left\{ F_{n},n\in \mathbb{%
N}\right\} $ is called a random environment.

A sequence of $\mathbb{N}_{0}$-valued random variables $\mathbf{Y}=\left\{
Y_{n},\ n\in \mathbb{N}_{0}\right\} $ specified on\ the respective
probability space $(\Omega ,\mathcal{F},\mathbf{P})$ is called a branching
process with one immigrant in random environment (BPIRE), if $Y_{0}$ is
independent of $\mathcal{E}$ and, given $\mathcal{E}$ the process $\mathbf{Y}
$ is a Markov chain with 
\begin{equation}
\mathcal{L}\left( Y_{n}|Y_{n-1}=y_{n-1},\mathcal{E}=(f_{1},f_{2},...)\right)
=\mathcal{L}(\xi _{n1}+\ldots +\xi _{ny_{n-1}}+1)  \label{BasicDefBPimmigr}
\end{equation}%
for every $n\in \mathbb{N}$, $y_{n-1}\in \mathbb{N}_{0}$ and $%
f_{1},f_{2},...\in \Delta $, where $\xi _{n1},\xi _{n2},\ldots $ are i.i.d.
random variables with distribution $f_{n}.$ In the language of branching
processes $Y_{n-1}$ is the $(n-1)$th generation size of the population and $%
f_{n}$ is the offspring distribution of an individual at generation $n-1$.
For the sake of simplicity, we consider that if $Y_{n-1}=y_{n-1}>0$ is the
population size of the ($n-1)$th generation of $\mathbf{Y}$ then first $\xi
_{n1}+\ldots +\xi _{ny_{n-1}}$ individuals of the $n$th generation are born
and afterwards one immigrant enters the population.\newline

We will call an $(i,n)$-clan the set of individuals alive at generation $n$ and being
children of the immigrant which entered the population at generation $i$. We
say that only the $(i,n)$-clan survives in $\mathbf{Y}$ at moment $n$ if $%
Y_{n}^{-}:=\xi _{n1}+\ldots +\xi _{ny_{n-1}}>0$ and all $Y_{n}^{-}$
particles belong to the $(i,n)$-clan. Let $\mathcal{A}_{i}(n)$ be the event
that only the $(i,n)$-clan survives in $\mathbf{Y}$ at moment $n$. The aim
of this paper is to study the asymptotic behavior of the probability $%
\mathbf{P}\left( \mathcal{A}_{i}(n)\right) $ as $n\rightarrow \infty $ and $i
$ varies with $n$ in an appropriate way. It is a natural question when we
are concerned with the diversity of a population. If we assume, for
instance, that each immigrant has a new type or belongs to a new species, the
realisation of the event $\mathcal{A}_{i}(n)$ means that the entire
population is of the same type or species, and its probability quantifies
the distribution of the time of the most recent common ancestor, in the case where there is only one founder.
For instance it has been shown recently that the current invasion of France by the yellow-legged hornet is due 
to a single female which gave birth after its arrival, probably in 
horti-cultural pots carried on cargo boats from China \cite{V06,A15}.
A natural further question could be to investigate the law of the random environment
close to the time of arrival of this founder. 
Indeed, due to the stochasticity of the environment, the fate of a mutant 
strongly depends on the time of its arrival (see for instance \cite{HI96} for 
biological implications of this fact).
The law of
the immigration has been chosen simple for technical reasons, but this works
constitutes a first step in the study of more general immigration laws.%
\newline

Populations of many species are maintained by recurrent events
of extinction of local populations and subsequent invasions from other
populations (e.g., \cite{H92}), and BPRIE's appear as a natural population model.
They have been studied in several papers (see \cite{K73,KKS75,AFa2013,AFa2015,AFa2018,AFa2019} in particular) in the context of random walks in random 
(in space) environment. Indeed there is an equivalence between the law of the hitting time of an integer $n$ by a random 
walk in random environment and the total progeny of a BPRIE up to generation $n$.
Haccou and coauthors \cite{HI96,HV03} studied the establishment probability of a population modeled by a BPRIE with the question of invasions in mind.
In particular, they proved that sequential invasions have a higher probability than simultaneous invasions.
In \cite{DLVZ19,LVZ19}, the authors have studied the tail distribution of the life-periods of BPRIE's in the critical and subcritical cases, respectively, and 
with an immigration law more general than in our case. 
Up to our knowledge, the properties of the events $\mathcal{A}_i(n)$, quantifying in some sense the low diversity of the population
at large times, has not been investigated until now.\\

We consider, along with the process $\mathbf{Y}$, a standard branching
process $\mathbf{Z}=\left\{ Z_n,\ n\in \mathbb{N}_{0}\right\} $ in the
random environment $\mathcal{E} $ which, given $\mathcal{E}$ is a Markov
chain with $Z_0=1$ and
\begin{equation}
\mathcal{L}\left( Z_{n}|Z_{n-1}=z_{n-1},\mathcal{E}=(f_{1},f_{2},...)\right)
=\mathcal{L}(\xi _{n1}+\ldots +\xi _{nz_{n-1}})  \label{BPordinary}
\end{equation}
for $n\in \mathbb{N}$, $z_{n-1}\in \mathbb{N}_{0}$ and $f_{1},f_{2},...\in
\Delta $.

To formulate our results we introduce the so-called associated random walk $%
\mathbf{S}=\left( S_{n},n\in \mathbb{N}_{0}\right) $. This random walk has
increments $X_{n}=S_{n}-S_{n-1}$, $n\geq 1$, defined as 
\begin{equation*}
X_{n}=\log m\left( F_{n}\right)
\end{equation*}%
which are i.i.d. copies of the logarithmic mean offspring number $X:=\log $ $%
m(F)$ with%
\begin{equation*}
m(F):=\sum_{j=1}^{\infty }jF\left( \left\{ j\right\} \right) .
\end{equation*}%
%
%
%

With each measure $F$ we associate the respective probability generating
function 
\begin{equation*}
F(s)=\sum_{j=0}^{\infty }F\left( \left\{ j\right\} \right) s^{j}.
\end{equation*}%
We assume that the probability generating functions meet the following
restrictions.\newline

\noindent \textbf{Hypothesis A1}. The probability generating function $F(s)$
is geometric with probability 1, that is%
\begin{equation}
F(s)=\frac{q}{1-ps}=\frac{1}{1+m(F)(1-s)}  \label{Frac_generating}
\end{equation}%
with random $p,q\in (0,1)$ satisfying $p+q=1$ and 
\begin{equation*}
m(F)=\frac{p}{q}=e^{\log (p/q)}=e^{X}.
\end{equation*}

\noindent \textbf{Hypothesis A2}. The branching process in random
environment is critical and satisfies the following moment conditions: 
\begin{equation*}
\mathbf{E}\left[ X\right] =0,\quad \mathbf{E}\left[ X^{2}\right] \in \left(
0,\infty \right) \quad \text{and}\quad \mathbf{E}\left[ e^{X}+e^{-X}\right]
<\infty .
\end{equation*}

\noindent \textbf{Hypothesis A3}. The distribution of $X$ is absolutely
continuous.\newline

Recall that $\mathcal{A}_{i}(n)$ is the event that only the $(i,n)$-clan
survives in $\mathbf{Y}$ at moment $n$. The main result of this paper
provides the asymptotic behavior of the probability $\mathbf{P}\left( 
\mathcal{A}_{i}(n)\right) $ as $n\rightarrow \infty $ and $i$ varies with $n$
in an appropriate way.

\begin{theorem}
\label{T_total}If Hypotheses A1-A2 are valid then

1) for any fixed $i$%
\begin{equation}
\lim_{n\rightarrow \infty }n^{3/2}\mathbf{P}\left( \mathcal{A}_{i}(n)\right)
=w_{i}\in \left( 0,\infty \right) ;  \label{AsumBegin}
\end{equation}

2) for any fixed $N$%
\begin{equation*}
\lim_{n\rightarrow \infty }n^{1/2}\mathbf{P}\left( \mathcal{A}%
_{n-N}(n)\right) =r_{N}\in \left( 0,\infty \right) ;
\end{equation*}

3) if, in addition, Hypothesis A3 is valid then 
\begin{equation}
\lim_{\min \left( i,n-i\right) \rightarrow \infty }i^{1/2}\left( n-i\right)
^{3/2}\mathbf{P}\left( \mathcal{A}_{i}(n)\right) =K\in \left( 0,\infty
\right) .  \label{AsymInterm}
\end{equation}
\end{theorem}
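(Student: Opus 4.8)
The plan is to reduce the event $\mathcal{A}_i(n)$ to a statement about the survival of a single BPRE started from the immigrant at time $i$ together with the simultaneous extinction of all the other clans. Conditionally on the environment $\mathcal{E}$, the $(i,n)$-clan is the population at generation $n$ of an ordinary BPRE $\mathbf{Z}$ started at time $i$ in the shifted environment $F_{i+1},F_{i+2},\dots$; the clans started at times $j\ne i$ (with $j\le n$) are independent copies of the same construction started at time $j$; and the initial population $Y_0$ contributes one more independent clan started at time $0$. Under Hypothesis A1 the generating functions are fractional-linear, so all these quantities are explicitly computable in terms of the associated random walk: for a critical BPRE with geometric reproduction laws, $\mathbf{P}(Z_{n-i}>0\mid\mathcal E)=\bigl(1+\sum_{k=i+1}^{n}e^{-(S_k-S_i)}\bigr)^{-1}$ and, more importantly, the probability that a clan started at time $j\le n$ is \emph{extinct} by time $n$ equals $1-\bigl(1+\sum_{k=j+1}^{n}e^{-(S_k-S_j)}\bigr)^{-1}$, again a functional of the walk. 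Hence, writing $\mathbf{P}(\mathcal{A}_i(n)\mid\mathcal E)$ as the product over $j\in\{0,1,\dots,n\}\setminus\{i\}$ of these extinction probabilities times the survival probability of the $i$-th clan, one gets a closed expression to which one then applies $\mathbf{E}[\cdot]$ over the environment.

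Next I would analyse this annealed expectation by the standard change-of-measure / conditioning technique for critical random walks. The survival factor $\bigl(1+\sum_{k=i+1}^{n}e^{-(S_k-S_i)}\bigr)^{-1}$ forces the post-$i$ increments of $\mathbf S$ to behave like a random walk conditioned to stay nonnegative (this is where the $(n-i)^{-3/2}$ comes from: it is the familiar $\mathbf{P}(\tau>n-i)\sim c(n-i)^{-1/2}$ type estimate refined by the local behaviour of the walk, together with the meander normalisation), while the product of extinction factors over $j<i$ contributes, after the analogous conditioning on the pre-$i$ part of the walk, the factor $i^{-1/2}$ — morally the probability that the walk run backward from time $i$ stays on the favourable side so that the early clans do die out. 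So the proof of part 3) splits into: (a) a \emph{spine decomposition at time $i$}, separating the environment before and after $i$, using independence of increments; (b) for the future part, a conditional limit theorem (of the type the abstract promises to establish) giving $(n-i)^{3/2}\mathbf{P}(\text{$i$-clan survives at $n$, all $i<j\le n$ clans dead}\mid \mathcal{F}_i)\to$ a limit functional of the conditioned walk; (c) for the past part, the companion statement that $i^{1/2}\mathbf{E}[\prod_{0\le j<i}(\text{$j$-clan dead at }n)\,\cdot\,(\text{coupling term})]$ converges; (d) combining (b)–(c) by dominated convergence, identifying $K\in(0,\infty)$ as the product of the two limit constants. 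Parts 1) and 2) are the degenerate cases $i$ fixed (the past part is a finite product, so only the $n^{-3/2}$ future asymptotics survive, upgraded by absolute continuity not being needed) and $n-i=N$ fixed (the future part is a finite product giving a strictly positive $\mathcal{F}_i$-measurable constant, and the past part alone produces the $n^{-1/2}$, so that here $r_N = \lim n^{1/2}\mathbf P(\cdots)$).

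The main obstacle, and the reason Hypothesis A3 (absolute continuity of $X$) is needed only in part 3), is step (b)–(c): one must show that the two ``tail'' events — the future clans dying while the $i$-clan lives, and the past clans dying — decouple in the limit and that each, after the correct polynomial normalisation, converges to a nondegenerate constant. This requires a conditional limit theorem for the bridge-like behaviour of $\mathbf S$ pinned by both a ``stay positive after $i$'' constraint and a ``stay controlled before $i$'' constraint, and the local/absolute-continuity hypothesis is what lets one pass from the survival event $\{\sum e^{-(S_k-S_i)} = O(1)\}$ to a genuine limiting density on the space of conditioned walks (preventing atoms that would spoil the constant's finiteness or positivity). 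Concretely the hard estimate will be a uniform (in $i$, $n$ with $\min(i,n-i)\to\infty$) version of $\mathbf{E}\bigl[e^{-\lambda \min_{k\le m}S_k}\,;\,\tau>m\bigr]\asymp m^{-1/2}$ together with its refinement needed to extract the extra $m^{-1}$; I would prove this by the Wiener–Hopf / Spitzer factorisation plus the explicit geometric structure from Hypothesis A1, exactly as the abstract advertises those ``conditional limit theorems for random walks, which are of independent interest''. Once those are in place, assembling part 3) — and specialising to parts 1) and 2) — is bookkeeping: dominated convergence with a dominating function built from the $m^{-1/2}$ bound, and identification of $w_i$, $r_N$, $K$ as explicit (if unilluminating) expectations over the limiting conditioned walk.
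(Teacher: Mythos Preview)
Your outline for parts 1) and 2) is essentially correct and matches the paper: under Hypothesis~A1 the conditional probability admits the closed form
\[
\mathcal{H}_{i,n}=\frac{a_i}{a_n+b_n-b_{i+1}}\cdot\frac{a_n}{a_n+b_n-b_1}
\]
(Corollary~\ref{C_fractional}), and for fixed $i$ (resp.\ fixed $N=n-i$) one conditions on the short side, applies the Guivarc'h--Liu asymptotics of Lemma~\ref{L_Guiv0}, and finishes by dominated convergence.

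For part 3), however, there is a genuine gap. Your steps (b)--(d) assert that the ``future'' and ``past'' pieces decouple asymptotically and that $K$ is the \emph{product} of two limit constants. But the second factor $a_n/(a_n+b_n-b_1)=e^{-S_n}/\sum_{k=1}^n e^{-S_k}$ involves the \emph{entire} walk, not just one side of time $i$; equivalently, the past extinction factors satisfy $\prod_{k<i}F_{k,n}(0)=\prod_{k<i}F_{k,i}(F_{i,n}(0))$ and thus depend on the future through $F_{i,n}(0)$. Your unspecified ``coupling term'' is precisely the hard part, and nothing in the outline explains how to carry out a \emph{joint} asymptotic with both $i\to\infty$ and $n-i\to\infty$ simultaneously. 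The product form of $K$ is unjustified and is not how the paper obtains it.

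The paper's route for part 3) is structurally different. It first time-reverses, using $(X_1,\dots,X_n)\stackrel{d}{=}(X_n,\dots,X_1)$, to rewrite $\mathbf{P}(\mathcal{A}_i(n))=\mathbf{E}[\bar{\mathcal{H}}_{j,n}]$ with $j=n-i$ and $\bar{\mathcal{H}}_{j,n}=e^{-\bar S_j}(1-\bar F_{0,j-1}(0))(1-\bar F_{0,n-1}(0))$. It then decomposes according to the \emph{location $\bar\tau(n)$ of the global minimum} of $\bar{\mathbf S}$ on $[0,n]$: the contributions from $\bar\tau(n)\in[N,j-N]$ and $\bar\tau(n)\in[j+N,n]$ are shown to be $o\bigl(j^{-3/2}(n-j)^{-1/2}\bigr)$ for large $N$ (Lemmas~\ref{L_Negl}--\ref{L_neglLeftInterm}), while the contributions from $\bar\tau(n)$ near $0$ and near $j$ are each handled via conditional limit theorems under the measures $\mathbf{P}^{\pm}_x$ (Lemmas~\ref{L_pr4New}--\ref{L_pr6New}) and shown to be $\sim c_k\,j^{-3/2}(n-j)^{-1/2}$. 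The constant $K$ arises as a \emph{sum} over these boundary contributions, not a product.

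Finally, the role of Hypothesis~A3 is more specific than you suggest: it is used in Lemma~\ref{L_Condx} to make $x\mapsto\mathbf{E}_x^+[O_\infty]$ continuous, which is what yields the uniformity in $x$ needed when gluing the pieces at the (random) location of the minimum. Your ``bridge-like behaviour'' picture is not wrong as heuristics, but the actual technical use is this continuity statement.
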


These results may seem counter-intuitive at a first glance. 
For instance, if we take $i=n/2$ in point 3), we obtain
$$ \lim_{n \rightarrow \infty }n^2 \mathbf{P} \left( \mathcal{A}_{n/2}(n)\right) =4K. $$
And thus the event $\mathcal{A}_i(n)$ for a fixed $i$ is more likely to happen than the event $\mathcal{A}_{n/2}(n)$, whereas the 
clan $(i,n)$ has to survive longer than the clan $(n/2,n)$. But we can get more intuition on the result by thinking  in terms of associated random 
walk $\mathbf{S}$.

As previously observed under different assumptions on the random environment
(see, for instance, \cite{vatutin2013evolution} for a comprehensive review
on the critical and subcritical cases (before 2013) or the recent monograph
\cite{GV2017}) the survival of a branching process in random environment until a given time $n$ is
essentially determined by its survival until the moment when the associated
random walk $\mathbf{S}$ attains its infimum on the interval $[0,n]$. The idea is that if we divide
the trajectory of the process on the interval $[0,n]$ into two parts, one
before the running infimum of the random environment $\mathbf{S}$, and one
after this running infimum, the process will live in a favorable environment
after the running infimum of the random environment, and will thus survive
with a non-negligible probability until time $n$, provided it survived until
the time of the running infimum.
But for the survival of the population to be likely until the time of the infimum of the random environment on $[0,n]$,
this infimum should not take too small values. To precise this intuition, let us recall the two main results of 
\cite{4h} which concerns critical branching processes in random environment without immigration. First of all, Theorem 1.1 in \cite{4h} states that there exists 
a positive and finite constant $\theta$ such that: 
$$ \mathbf{P}(Z_n>0) \sim \theta \mathbf{P}(\min(S_0,...,S_n)\geq 0), \quad \text{as} \quad n \to \infty. $$
Theorem 1.4 in \cite{4h} states that the law of $(\tau(n),\min(S_0,...,S_n))$ conditionally on the event $\{Z_n>0\}$ converges weakly to some probability 
measure on $\mathbb{N}_0 \times \mathbb{R}_0^-$, where $\tau(n)$ is the time when the random walk $\mathbf{S}$ reaches its minimum on $[0,n]$.
In particular, it implies that for any $\varepsilon>0$ there exists $x(\varepsilon)>0$ such that for any $x \geq x(\varepsilon)$ and $n$ large enough,
$$ \mathbf{P}(Z_n>0, \min(S_0,...,S_n)\leq -x) \leq \varepsilon \mathbf{P}(Z_n> 0). $$

As a consequence, for an immigrant arriving at generation $i$ to be the only ancestor of the population at time $n$ we have to combine 
(at least) two elements: first the random environment has to be bad enough before time $i$ for the other families alive before time $i$
to get extinct (broadly speaking $\min(S_0,...,S_i)\leq 0$, and even $\tau(i)$ close ot $i$); second the environment has to be good enough 
after time $i$ for the $(i,n)$-clan to stay alive (broadly speaking $\min (S_{i+k}-S_i, 0 \leq k \leq n-i)\geq 0$). Of course the 
analysis of the probability of the event $\mathcal{A}_i(n)$ is more involved, as we also have to take into account the fact that the 
subsequent clans do not survive, but, as we will now show, this analysis give the good order of magnitude for the part before time $i$.
The probability that the minimum of the random walk on the time interval $[0,n]$ is reached at time $i$ is:
\begin{align*} \mathbf{P}(\tau(n)=i)&= \mathbf{P}(\tau(i)=i)\mathbf{P}(\min(S_0,...,S_{n-i})\geq 0)\\
&=\mathbf{P}(\max(S_1,...,S_{i})< 0)\mathbf{P}(\min(S_0,...,S_{n-i})\geq 0) \\
&\sim \frac{C}{i^{1/2}(n-i)^{1/2}} \quad \text{as} \quad (i,n-i) \to \infty, \end{align*}
where we have used duality principle for random walks (see Theorem 4.1 in \cite{GV2017}) and Equation \eqref{Asym0} below.
Such an analysis is enough to understand the leading order for point 2) of the theorem, but a more thorough analysis is required to 
understand the term of order $(n-i)^{-3/2}$, describing the fact that only the $(i,n)$-clan survives. This will be the aim of the subsequent proofs.\\

The rest of the paper is organised as follows. In Section \ref{sec_aux_res} we collect some auxiliary results dealing with explicite expressions for 
the probability of the event $\mathcal{A}_i(n)$. Section \ref{sec_nN} is dedicated to the proof of point 2) of Theorem \ref{T_total}.
The proof of point 1) of Theorem \ref{T_total} is provided in Section \ref{sec_fixed_i}.
Finally the proof of Theorem \ref{T_total} is completed in Section \ref{sec_ini_to_infty}.\\

In the sequel we will denote by $C,C_{1}, C_{2},...$ constants which may vary from line to line
and by $K_{1},K_{2},...$ some fixed
constants.

\section{Auxiliary results} \label{sec_aux_res}

Given the environment $\mathcal{E}=\left\{ F_{n},n\in \mathbb{N}\right\} $,
we construct the i.i.d. sequence of generating functions 
\begin{equation*}
F_{n}(s):=\sum_{j=0}^{\infty }F_{n}\left( \left\{ j\right\} \right)
s^{j},\quad s\in \lbrack 0,1],
\end{equation*}%
and use below the convolutions of $F_{1},...,F_{n}$ specified for $0\leq
i\leq n-1$ by the equalities $F_{n,n}(s):=s$, 
\begin{equation*}
\left\{ 
\begin{array}{l}
F_{i,n}(s) :=F_{i+1}(F_{i+2}(\ldots F_{n}(s)\ldots )),\quad \\ 
F_{n,i}(s) :=F_{n}(F_{n-1}(\ldots F_{i+1}(s)\ldots )).%
\end{array}
\right.
\end{equation*}

Then we can express the probability of the event $\mathcal{A}_i(n)$ conditionally on $\mathbf{S}$ as follows:
\begin{equation}  \label{expr_PAin}
\mathbf{P}\left( \mathcal{A}_{i}(n)| \mathbf{S}\right) =\mathbf{E}\left[
(1-F_{i,n}(0))\prod_{k\neq i}^{n-1}F_{k,n}(0) \Bigg| \mathbf{S} \right] .
\end{equation}

For the sake of readability, let us now introduce a set of notation:

$$h_{n}(s):=(1-F_{0,n}(s))\prod_{k=1}^{n-1}F_{k,n}(s),$$
and for $i\leq n$ 
\begin{equation*}
a_{i,n}:=e^{S_{i}-S_{n}},\quad
b_{i,n}:=\sum_{k=i}^{n-1}e^{S_{i}-S_{k}},\quad 
a_{n}:=a_{0,n}=e^{-S_{n}}\quad \text{and} \quad b_{n}:=b_{0,n}=\sum_{k=0}^{n-1}e^{-S_{k}}.
\end{equation*}

We have the following equality:

\begin{lemma}
\label{L_represent2}Under Hypothesis $A1$%
$$
h_{n}(s) =\frac{1}{a_{n}\left( 1-s\right) ^{-1}+b_{n}}\frac{a_{n}\left(
1-s\right) ^{-1}}{a_{n}\left( 1-s\right) ^{-1}+b_{n}-b_{1}}.$$
\end{lemma}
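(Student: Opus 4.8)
The plan is to compute $h_n(s)$ by exploiting the fact that, under Hypothesis~A1, each $F_k$ is a fractional-linear (geometric) generating function, and that the class of such functions is closed under composition. First I would record the basic identity: writing $F_k(s) = 1/(1+m_k(1-s))$ with $m_k = e^{X_k}$, one has $1-F_k(s) = m_k(1-s)/(1+m_k(1-s))$. A convenient bookkeeping device is to track the pair $(1-F_{k,n}(s))^{-1}$, or equivalently to observe that for fractional-linear maps the quantity $\tfrac{1}{1-F_{k,n}(s)}$ is affine in $\tfrac{1}{1-s}$. A standard induction (see the classical computation for fractional-linear BPRE, e.g. in \cite{GV2017} or \cite{4h}) then gives, for $0\le k \le n$,
\begin{equation*}
\frac{1}{1-F_{k,n}(s)} = e^{S_k - S_n}\,\frac{1}{1-s} + \sum_{j=k}^{n-1} e^{S_k - S_j},
\end{equation*}
i.e. $\tfrac{1}{1-F_{k,n}(s)} = a_{k,n}(1-s)^{-1} + b_{k,n}$ in the notation of the excerpt; in particular $\tfrac{1}{1-F_{0,n}(s)} = a_n(1-s)^{-1} + b_n$.

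Next I would assemble the product $\prod_{k=1}^{n-1} F_{k,n}(s)$. Using $F_{k,n}(s) = 1 - \big(a_{k,n}(1-s)^{-1}+b_{k,n}\big)^{-1}$ directly looks messy, so instead I would use the telescoping structure: since $1 - F_{k,n}(s) = \big(1 - F_{k-1,n}(s)\big)\cdot (\text{something})^{-1}$ is not quite a clean ratio, the cleaner route is to note that $F_{k,n}(s) = F_{k+1}\!\big(F_{k+1,n}(s)\big)$ evaluated appropriately — actually the most transparent identity is
\begin{equation*}
F_{k,n}(s) = \frac{a_{k,n}(1-s)^{-1} + b_{k,n} - 1}{a_{k,n}(1-s)^{-1} + b_{k,n}} = \frac{a_{k,n}(1-s)^{-1} + b_{k+1,n}}{a_{k,n}(1-s)^{-1} + b_{k,n}},
\end{equation*}
using $b_{k,n} = 1 + e^{S_k} b_{k+1,n}\cdot e^{-S_k}\cdots$ — more precisely $b_{k,n} - 1 = \sum_{j=k+1}^{n-1} e^{S_k-S_j} = e^{S_k-S_{k+1}} b_{k+1,n} = a_{k,k+1} b_{k+1,n}$, and similarly $a_{k,n} = a_{k,k+1} a_{k+1,n}$, so numerator of $F_{k,n}$ equals $a_{k,k+1}\big(a_{k+1,n}(1-s)^{-1}+b_{k+1,n}\big)$ while its denominator is $a_{k,k+1}(a_{k+1,n}(1-s)^{-1}+b_{k+1,n}) + 1$. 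The point is that $\prod_{k=1}^{n-1} F_{k,n}(s)$ telescopes: writing each factor as (denominator of the $(k+1)$-th level expression) over (denominator of the $k$-th level), the product collapses. I expect the outcome to be
\begin{equation*}
\prod_{k=1}^{n-1} F_{k,n}(s) = \frac{a_{n,n}(1-s)^{-1} + b_{n,n}}{a_{1,n}(1-s)^{-1}+b_{1,n}} \cdot (\text{boundary correction}),
\end{equation*}
and since $a_{n,n}(1-s)^{-1}+b_{n,n} = (1-s)^{-1}$ (as $F_{n,n}(s)=s$, $a_{n,n}=1$, $b_{n,n}=0$), and $a_{1,n}(1-s)^{-1}+b_{1,n} = \big(a_n(1-s)^{-1}+b_n-b_1\big)$ after multiplying through by $e^{S_1}$ (note $a_n = e^{-S_n} = e^{-S_1}a_{1,n}$ and $b_n - b_1 = \sum_{k=1}^{n-1}e^{-S_k} = e^{-S_1} b_{1,n}$, with $b_1 = \sum_{k=0}^{0} e^{-S_k} = 1 = e^{-S_0}$), the factor $e^{-S_1}$ cancels between numerator and denominator, leaving $\prod_{k=1}^{n-1}F_{k,n}(s) = \dfrac{a_n(1-s)^{-1}}{a_n(1-s)^{-1}+b_n-b_1}$.

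Finally, multiplying by $1 - F_{0,n}(s) = \big(a_n(1-s)^{-1}+b_n\big)^{-1}$ gives exactly the claimed expression
\begin{equation*}
h_n(s) = \frac{1}{a_n(1-s)^{-1}+b_n}\cdot\frac{a_n(1-s)^{-1}}{a_n(1-s)^{-1}+b_n-b_1}.
\end{equation*}
The main obstacle I anticipate is purely bookkeeping: getting the telescoping of $\prod_{k=1}^{n-1} F_{k,n}(s)$ exactly right, including the correct treatment of the boundary terms at $k=1$ (which produces the $-b_1$) and at $k=n-1$ (ensuring the chain terminates at $F_{n,n}(s)=s$), and keeping track of the $e^{S_k}$ normalizing factors so they cancel cleanly. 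Once the recursion $\tfrac{1}{1-F_{k,n}(s)} = a_{k,n}(1-s)^{-1}+b_{k,n}$ is established by induction on $n-k$, everything else is algebraic simplification; there is no probabilistic content beyond Hypothesis~A1. I would present it as: (i) prove the recursion for $\tfrac{1}{1-F_{k,n}}$ by induction; (ii) deduce the telescoped product; (iii) combine and simplify using $a_n = e^{-S_n}$, $b_1 = 1$.
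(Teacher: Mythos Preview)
Your approach is essentially the same as the paper's: establish by induction the fractional-linear formula $\tfrac{1}{1-F_{k,n}(s)}=a_{k,n}(1-s)^{-1}+b_{k,n}$, then telescope the product $\prod_{k=1}^{n-1}F_{k,n}(s)$ and multiply by $1-F_{0,n}(s)$. The paper does exactly this, but with one normalization that removes all the bookkeeping you anticipate as the main obstacle: dividing numerator and denominator of $\tfrac{1}{1-F_{i,n}(s)}$ by $e^{S_i}$ gives $1-F_{i,n}(s)=\dfrac{a_i}{a_n(1-s)^{-1}+b_n-b_i}$, and since $b_i+a_i=b_{i+1}$ one gets directly
\[
F_{i,n}(s)=\frac{a_n(1-s)^{-1}+b_n-b_{i+1}}{a_n(1-s)^{-1}+b_n-b_i},
\]
so the product over $i=1,\dots,n-1$ telescopes immediately to $\dfrac{a_n(1-s)^{-1}}{a_n(1-s)^{-1}+b_n-b_1}$. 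This sidesteps the $a_{k,k+1}$ factors and the ``boundary correction'' you were tracking; your route reaches the same destination but with more algebra to manage.
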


\begin{proof}
Hypothesis A1 implies 
\begin{equation}
\ F_{i}(s)=\frac{q_{i}}{1-p_{i}s}=\frac{1}{1+e^{X_{i}}\left( 1-s\right) }
\end{equation}%
for all $i\in \mathbb{N}$. By induction we can prove that 
\begin{equation}
F_{0,n}(s)=1-\frac{1}{a_{n}\left( 1-s\right) ^{-1}+b_{n}}  \label{Frac_F}
\end{equation}%
and, therefore,%
\begin{eqnarray}  \label{expr_Fin}
F_{i,n}(s) &=&1-\frac{1}{a_{i,n}\left( 1-s\right) ^{-1}+b_{i,n}}  \notag \\
&=&1-\frac{a_{i}}{a_{n}\left( 1-s\right) ^{-1}+b_{n}-b_{i}}=\frac{%
a_{n}\left( 1-s\right) ^{-1}+b_{n}-b_{i+1}}{a_{n}\left( 1-s\right)
^{-1}+b_{n}-b_{i}}.
\end{eqnarray}%
Thus,%
\begin{eqnarray*}
h_{n}(s) &=&\frac{1}{a_{n}\left( 1-s\right) ^{-1}+b_{n}}\prod_{j=1}^{n-1}%
\frac{a_{n}\left( 1-s\right) ^{-1}+b_{n}-b_{j+1}}{a_{n}\left( 1-s\right)
^{-1}+b_{n}-b_{j}} \\
&=&\frac{1}{a_{n}\left( 1-s\right) ^{-1}+b_{n}}\frac{a_{n}\left( 1-s\right)
^{-1}}{a_{n}\left( 1-s\right) ^{-1}+b_{n}-b_{1}}.
\end{eqnarray*}%
This ends the proof.
\end{proof}

To end this section, we will provide an expression in terms of $a_i$'s and $%
b_i$'s for the random variable
\begin{equation*}
\mathcal{H}_{i,n}:=\left( 1-F_{i,n}(0)\right) \prod_{k\neq i}^{n-1}F_{k,n}(0).
\end{equation*}

\begin{corollary}
\label{C_fractional} Under Hypothesis $A1$ 
\begin{equation*}
\mathcal{H}_{0,n}=\frac{1}{a_{n}+b_{n}}\frac{a_{n}}{a_{n}+b_{n}-b_{1}}
\end{equation*}%
and, for any $i=1,2,...,n-1$ 
\begin{equation*}
\mathcal{H}_{i,n}=\frac{a_{i}}{a_{n}+b_{n}-b_{i+1}}\frac{a_{n}}{%
a_{n}+b_{n}-b_{1}}.
\end{equation*}
\end{corollary}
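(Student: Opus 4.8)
My plan is to read off both identities directly from the fractional--linear representations established inside the proof of Lemma~\ref{L_represent2}, namely from \eqref{Frac_F} and \eqref{expr_Fin} evaluated at $s=0$, together with the telescoping structure of the product $\prod_k F_{k,n}$.

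The case $i=0$ is immediate: by definition $\mathcal{H}_{0,n}=h_n(0)$, so putting $s=0$ in Lemma~\ref{L_represent2} (and using $a_n(1-0)^{-1}=a_n$) gives
$$\mathcal{H}_{0,n}=\frac{1}{a_n+b_n}\,\frac{a_n}{a_n+b_n-b_1}.$$
For $1\le i\le n-1$ I would start from $\mathcal{H}_{i,n}=(1-F_{i,n}(0))\prod_{k=1,\,k\ne i}^{n-1}F_{k,n}(0)$ and insert the two shapes of \eqref{expr_Fin} at $s=0$: the middle expression gives $1-F_{i,n}(0)=a_i/(a_n+b_n-b_i)$, and the last one gives $F_{k,n}(0)=(a_n+b_n-b_{k+1})/(a_n+b_n-b_k)$. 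The full product $\prod_{k=1}^{n-1}F_{k,n}(0)$ telescopes --- the numerator at index $k$ cancels the denominator at index $k+1$ --- and, since $b_n-b_n=0$, it collapses to $a_n/(a_n+b_n-b_1)$; deleting the factor with $k=i$ multiplies this by its reciprocal $(a_n+b_n-b_i)/(a_n+b_n-b_{i+1})$. Substituting and cancelling the common factor $a_n+b_n-b_i$ yields
$$\mathcal{H}_{i,n}=\frac{a_i}{a_n+b_n-b_i}\cdot\frac{a_n}{a_n+b_n-b_1}\cdot\frac{a_n+b_n-b_i}{a_n+b_n-b_{i+1}}=\frac{a_i}{a_n+b_n-b_{i+1}}\,\frac{a_n}{a_n+b_n-b_1},$$
which is the claimed formula. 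Equivalently one can bypass the telescoping by writing $\prod_{k=1,\,k\ne i}^{n-1}F_{k,n}(0)=h_n(0)\big/\bigl((1-F_{0,n}(0))\,F_{i,n}(0)\bigr)$ and quoting Lemma~\ref{L_represent2} and \eqref{Frac_F} together with $1-F_{0,n}(0)=1/(a_n+b_n)$.

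There is no genuine obstacle here: once \eqref{Frac_F} and \eqref{expr_Fin} are available the statement reduces to elementary algebra. The only points that deserve a little care are keeping the exact range of the product (the index $k=0$ does not occur, which is precisely why the factor $a_n+b_n-b_1$ rather than $a_n+b_n$ appears when $i\ge 1$) and noting the degenerate values $F_{n,n}(s)=s$ and $b_n-b_n=0$ that make the telescoped product collapse to $a_n$ in the numerator.
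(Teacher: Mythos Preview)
Your proof is correct and follows exactly the approach the paper takes: the paper's proof simply states that the first identity is $h_n(0)$ from Lemma~\ref{L_represent2} and that the second ``derives from \eqref{expr_Fin}, by taking $s=0$'', which is precisely the telescoping computation you have spelled out in detail.
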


\begin{proof}
The first part of the Corollary is a direct consequence of Lemma \ref%
{L_represent2}, as $\mathcal{H}_{0,n}=h_{n}(0).$ The second part derives
from \eqref{expr_Fin}, by taking $s=0$.
\end{proof}

The next statement is a particular case of a theorem established in \cite%
{guivarc2001proprietes}.

\begin{lemma}
\label{L_Guiv0} Let $\eta _{n},n\geq 1$ be a sequence of i.i.d. positive random
variables having non-lattice distribution. Set $\Upsilon
_{n}:=\prod_{k=1}^{n}\eta _{k}$, $\Lambda _{n}:=\sum_{k=1}^{n}\Upsilon _{k}$. 
Assume that there exist strictly positive numbers $\varepsilon ,\alpha
,\beta $ and two nonegative continuous functions $g$ and $h$ defined on $%
[0,\infty )$ and not identically equal to zero and a constant $C>0$ such
that for all $a>0,c_{1}\geq 0$ and $c_{2}\geq 0$%
\begin{equation*}
g(a)\leq Ca^{\alpha },\ h(c_1)\leq \frac{C}{\left( 1+c_1\right) ^{\beta }},\
\left\vert h(c_{1})-h(c_{2})\right\vert \leq C\left\vert
c_{1}-c_{2}\right\vert ^{\varepsilon }.
\end{equation*}%
If%
\begin{equation*}
\mathbf{E}\log \eta _{1}=0,\ \mathbf{E}\left[ \eta _{1}^{\alpha }+\eta
_{1}^{-\varepsilon }\right] <\infty 
\end{equation*}%
then there exist two positive constants $c(\varphi ,\psi )$ and $c(\psi )$
such that 
\begin{equation}
\lim_{n\rightarrow \infty }n^{3/2}\mathbf{E}\left[ g(\Upsilon _{n})h(\Lambda
_{n})\right] =c(g,h),\ \ \lim_{n\rightarrow \infty }n^{1/2}\mathbf{E}\left[
h(\Lambda_{n})\right] =c(h).  \label{GuivStatement}
\end{equation}
\end{lemma}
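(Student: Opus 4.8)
This statement is the particular case of the general renewal theorem of \cite{guivarc2001proprietes} in which the underlying sequence is i.i.d.; we nevertheless indicate the structure of the argument, since the same ideas reappear in the sequel. Set $X_k:=\log\eta_k$, so that the $X_k$ are i.i.d., centered, with finite positive variance (which holds in all our applications by Hypothesis~A2) and with $\mathbf{E}[e^{\alpha X}+e^{-\varepsilon X}]<\infty$; with $S_0:=0$ and $S_n:=X_1+\dots+X_n$ one has $\Upsilon_n=e^{S_n}$ and $\Lambda_n=\sum_{k=1}^n e^{S_k}$. For each of the two limits the plan is to (i) localise the expectation, using the decay of $h$ (and of $g$), onto a fluctuation event of $\mathbf{S}$ whose probability has the prescribed order $n^{-1/2}$, resp.\ $n^{-3/2}$; (ii) show that on this event $\Lambda_n$, and $\Upsilon_n$, converge in law under the conditioned measure, via the known local limit theorems for $\mathbf{S}$ conditioned to stay on one side of a level and for its dual; (iii) conclude by bounded convergence, the limiting constant being read off from the renewal functions of $\mathbf{S}$ and $-\mathbf{S}$.

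For the second limit write $M_n:=\max_{1\le k\le n}S_k$, so that $h(\Lambda_n)\le C(1+\Lambda_n)^{-\beta}\le C(1+e^{M_n})^{-\beta}$. Stratifying over the values of $M_n$ and using the uniform bound $\mathbf{P}(M_n\le y)\le C'(1+y)/\sqrt n$ (a consequence of \eqref{Asym0}), the contribution of $\{M_n>x\}$ to $\sqrt n\,\mathbf{E}[h(\Lambda_n)]$ is at most $C\sum_{j\ge x}e^{-\beta j}(1+j)$, which tends to $0$ as $x\to\infty$, uniformly in $n$. For fixed $x$, under $\mathbf{P}(\cdot\mid M_n\le x)$ the walk drifts to $-\infty$ and $\Lambda_n$ converges in law to a finite limit $\Lambda_\infty^{(x)}$, while $h$ is bounded and continuous; combined with $\mathbf{P}(M_n\le x)\sim V(x)/\sqrt n$ this yields $\sqrt n\,\mathbf{E}[h(\Lambda_n)\mathbf{1}\{M_n\le x\}]\to V(x)\,\mathbf{E}[h(\Lambda_\infty^{(x)})]$, and letting $x\to\infty$ gives $c(h)\in(0,\infty)$.

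For the first limit the extra factor $g(\Upsilon_n)\le C e^{\alpha S_n}$ is small when $S_n\to-\infty$; since under $\{M_n\le x\}$ one has $S_n\asymp-\sqrt n$, the product $g(\Upsilon_n)h(\Lambda_n)$ is non-negligible only on the rarer event $\{M_n\le x\}\cap\{S_n\ge -y\}$ --- a trajectory that stays below level $x$ and yet comes back near $0$ at time $n$. By the classical bridge-type estimate (\eqref{Asym0} together with a local limit theorem for $S_n$ under the staying-below conditioning), $\mathbf{P}(M_n\le x,\,S_n\ge -y)$ is of order $n^{-3/2}$, which produces the normalisation $n^{3/2}$. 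On this event the path looks, near time $0$, like the walk conditioned to stay below $x$ forever and, near time $n$ after time reversal, like its dual conditioned to stay above a level, so $\Lambda_n$ again converges in law to a finite limit and $\Upsilon_n=e^{S_n}$ is tight; bounded convergence then gives $n^{3/2}\mathbf{E}[g(\Upsilon_n)h(\Lambda_n)]\to c(g,h)\in(0,\infty)$.

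The main obstacle lies in steps (ii)--(iii): establishing the convergence in law of $\Lambda_n$ under the conditioned measures and making it compatible with a bounded-convergence passage to the limit uniformly in $n$. The decay bound $h(c)\le C(1+c)^{-\beta}$ and the power bound $g(a)\le Ca^\alpha$, combined with $\mathbf{E}[\eta_1^\alpha+\eta_1^{-\varepsilon}]<\infty$, supply the required domination, while the H\"older estimate $|h(c_1)-h(c_2)|\le C|c_1-c_2|^\varepsilon$ is exactly what allows one to replace $\Lambda_n$ by a truncation $\sum_{k\le M}e^{S_k}$ with an error bounded by the $\varepsilon$-th power of the tail $\sum_{k>M}e^{S_k}$, which is uniformly small under the conditioning. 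Once these estimates are available the remainder is routine fluctuation theory for $\mathbf{S}$; alternatively, one simply invokes \cite{guivarc2001proprietes}.
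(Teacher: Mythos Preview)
The paper does not prove this lemma at all: it simply states that the result is a particular case of a theorem in \cite{guivarc2001proprietes} and moves on. Your proposal is therefore consistent with the paper's treatment---indeed you explicitly note that one may ``simply invoke \cite{guivarc2001proprietes}''---and the heuristic outline you supply (localisation via the decay of $h$ and the growth bound on $g$, reduction to fluctuation events of the associated walk of probability $n^{-1/2}$ resp.\ $n^{-3/2}$, convergence of $\Lambda_n$ under the conditioned measures, H\"older regularity of $h$ to control truncation errors) is a fair summary of the mechanism behind the cited result.

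One small caveat: your sketch tacitly assumes $\mathrm{Var}(\log\eta_1)<\infty$, which is not part of the hypotheses of the lemma as stated (you acknowledge this by appealing to Hypothesis~A2 for the applications). The general statement in \cite{guivarc2001proprietes} does not require this, so if you were to turn the sketch into a self-contained proof you would either have to add that assumption or work harder; but since the paper itself defers entirely to the reference, this is not a discrepancy with the paper's argument.
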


\section{The case $i=n-N$} \label{sec_nN}

Thanks to the auxiliary results derived in the previous section, we have now the needed ingredients to prove the second statement of Theorem \ref{T_total}.

\begin{lemma}
\label{L_Largei}If Hypotheses A1-A2 are valid then, for each fixed $N$%
\begin{equation*}
\lim_{n\rightarrow \infty }\sqrt{n}\mathbf{P}\left( \mathcal{A}%
_{n-N}(n)\right) =r_{N}\in \left( 0,\infty \right) .
\end{equation*}
\end{lemma}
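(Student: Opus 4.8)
The plan is to compute $\mathbf{P}(\mathcal{A}_{n-N}(n))$ by averaging the explicit expression for $\mathcal{H}_{n-N,n}$ from Corollary~\ref{C_fractional} over the environment, and then to identify the contribution governing the $n^{-1/2}$ rate. Writing $i = n-N$, Corollary~\ref{C_fractional} gives
\begin{equation*}
\mathcal{H}_{n-N,n} = \frac{a_{n-N}}{a_n + b_n - b_{n-N+1}}\cdot \frac{a_n}{a_n + b_n - b_1}.
\end{equation*}
The first step is to rewrite the denominators in a way that separates the ``early'' part of the walk (indices $0,\dots,n-N$) from the ``late'' part (indices $n-N,\dots,n-1$), which only involves the last $N$ steps. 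Using $a_n + b_n - b_{n-N+1} = e^{-S_{n-N+1}}\sum_{k=n-N+1}^{n} e^{S_{n-N+1}-S_k}$ and $a_{n-N}=e^{-S_{n-N}}$, the first factor depends only on the increments $X_{n-N+1},\dots,X_n$, a fixed finite block. The delicate factor is $a_n/(a_n+b_n-b_1) = e^{-S_n}/(e^{-S_n}+\sum_{k=1}^{n-1} e^{-S_k})$, which, after factoring out $e^{-S_1}$ from numerator and denominator, becomes $\Upsilon_{n}/(\Upsilon_n + \Lambda_{n})$-type expression in the language of Lemma~\ref{L_Guiv0} with $\eta_k = e^{X_{k+1}}$.

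The second step is to bring the problem into the exact form of Lemma~\ref{L_Guiv0}. Conditioning on the last $N$ increments (which contribute an $O(1)$ random factor, integrable by Hypothesis~A2 since $\mathbf{E}[e^{X}+e^{-X}]<\infty$), one is left with an expectation of the form $\mathbf{E}[h(\Lambda_{n-N})\cdot(\text{bounded functional of the tail block})]$, where $h(c) = 1/(1+c)$ satisfies $h(c)\le C/(1+c)$ and is Hölder (indeed Lipschitz) as required, with $\beta = \varepsilon = 1$. The moment hypotheses $\mathbf{E}\log\eta_1 = \mathbf{E}[X] = 0$ and $\mathbf{E}[\eta_1^{\alpha}+\eta_1^{-\varepsilon}] = \mathbf{E}[e^{\alpha X}+e^{-X}] < \infty$ hold by Hypothesis~A2 for a suitable $\alpha$ (we only need the $h$-part of \eqref{GuivStatement}, so the function $g$ can be taken trivial or $g\equiv 1$). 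The second limit in \eqref{GuivStatement} then yields $n^{1/2}\mathbf{E}[h(\Lambda_{n-N})] \to c(h) \in (0,\infty)$, and absolute continuity/non-latticeness needed for Lemma~\ref{L_Guiv0} follows from Hypothesis~A2 (the distribution of $X$ is non-lattice since $\mathbf{E}[X^2]\in(0,\infty)$ together with $\mathbf{E}[X]=0$; if one wants non-latticeness outright it is implied here, and in any case A3 guarantees it).

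The third step is to handle the coupling between the tail block and $\Lambda_{n-N}$: the factor $1/(a_n+b_n-b_{n-N+1})$ is \emph{not} independent of $\Lambda_{n-N}$ through $S_{n-N}$, so one cannot factor the expectation naively. The fix is to write everything as a functional of $(\Lambda_{n-N}, e^{-S_{n-N}}, X_{n-N+1},\dots,X_n)$ and note that $e^{-S_{n-N}} \le \Lambda_{n-N}$, so the tail factors are dominated by $1/(1+\Lambda_{n-N})$ times a bounded functional of the last $N$ increments; conversely one gets a matching lower bound on the event that the last $N$ increments are bounded, which has positive probability. This squeezing, combined with the Lemma~\ref{L_Guiv0} asymptotics, gives the two-sided bound and identifies $r_N = c(h)\cdot\mathbf{E}[\text{tail functional}] \in (0,\infty)$; the finiteness and strict positivity of $r_N$ come respectively from the $C/(1+c)$ bound on $h$ and from the positive-probability lower-bound event.

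The main obstacle I expect is precisely this entanglement in the middle factor: the denominator $a_n + b_n - b_{n-N+1}$ mixes the tail increments with $S_{n-N}$, which in turn is the dominant term inside $\Lambda_{n-N}$, so a clean product decomposition is unavailable and one must instead control the functional $\frac{e^{-S_{n-N}}}{e^{-S_{n-N}}\,(\text{tail sum})}\cdot\frac{1}{1+\Lambda_{n-N}}$ uniformly. Showing that this is asymptotically equivalent (up to the constant $\mathbf{E}$ of the tail functional) to $h(\Lambda_{n-N})$ requires a dominated-convergence argument using the integrable envelopes from Hypothesis~A2, plus the fact that conditionally on the early walk the tail block is independent with a fixed law — this is where the bulk of the technical work lies.
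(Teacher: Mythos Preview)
Your overall strategy---isolate the last $N$ environmental steps and reduce the remaining expectation to the second limit in Lemma~\ref{L_Guiv0}---matches the paper's. The difference is in execution, and there the paper's route is substantially cleaner while yours, as sketched, has a gap.

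The paper does \emph{not} start from Corollary~\ref{C_fractional}. Instead it uses the compositional identity $F_{k,n}(0)=F_{k,i}(F_{i,n}(0))$ for $k<i=n-N$, so that
\[
\mathbf{P}(\mathcal A_{i}(n))=\mathbf{E}\Bigl[(1-\tilde F_{0,N}(0))\prod_{k=1}^{N-1}\tilde F_{k,N}(0)\,\Psi_i\bigl(\tilde F_{0,N}(0)\bigr)\Bigr],
\qquad \Psi_i(s)=\mathbf{E}\Bigl[\prod_{k=1}^{i-1}F_{k,i}(s)\Bigr].
\]
The coupling between the early and late blocks is thereby reduced to a \emph{single scalar} $s=\tilde F_{0,N}(0)\in[0,1)$. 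A duality step ($\prod_k F_{k,i}(s)\stackrel{d}{=}\prod_k F_{k,0}(s)$) plus linear-fractional telescoping yields $\Psi_i(s)=\mathbf{E}\bigl[(1-s)^{-1}/((1-s)^{-1}+\Lambda_{i-1})\bigr]$, which is exactly of the form $\mathbf{E}[h(\Lambda_{i-1})]$; Lemma~\ref{L_Guiv0} gives $\sqrt{i}\,\Psi_i(s)\to\psi(s)$ and dominated convergence finishes.

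Your route via Corollary~\ref{C_fractional} leads, after conditioning on the tail block $B=\sum_{k=n-N+1}^{n}e^{S_{n-N}-S_k}$, to the second factor $\Upsilon_m/(\Lambda_m+\Upsilon_m B)$ with $m=n-N$, $\Upsilon_m=e^{-S_m}$, $\Lambda_m=\sum_{k=1}^{m}e^{-S_k}$. This is \emph{not} of the form $h(\Lambda_m)$ for a fixed $h$, so your ``step two'' does not go through as written. The squeezing you propose in ``step three'' yields at best $\tfrac{1}{1+B}\cdot\tfrac{\Upsilon_m}{\Lambda_m}\le \tfrac{\Upsilon_m}{\Lambda_m+\Upsilon_m B}\le \tfrac{\Upsilon_m}{\Lambda_m}$, i.e.\ two-sided $n^{-1/2}$ bounds with \emph{different} constants, which does not establish the existence of a limit $r_N$. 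The missing ingredient is precisely the duality the paper uses: rewrite the factor as $1/(\Lambda_m/\Upsilon_m+B)$ and note that $\Lambda_m/\Upsilon_m=\sum_{k=1}^{m}e^{S_m-S_k}$ has, by time-reversal of the i.i.d.\ increments, the law of a fresh $\tilde\Lambda_{m-1}$; this restores the clean form needed for Lemma~\ref{L_Guiv0}. Once you add that step your argument and the paper's coincide.

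Two smaller points. First, the first factor $a_{n-N}/(a_n+b_n-b_{n-N+1})$ equals $1/B$ and depends on the last $N$ increments only, so your later remark that its denominator ``mixes the tail increments with $S_{n-N}$'' contradicts what you correctly wrote earlier; the entanglement lives entirely in the \emph{second} factor. Second, $\mathbf{E}X=0$ and $\mathbf{E}X^2\in(0,\infty)$ do \emph{not} force $X$ to be non-lattice (take $X=\pm 1$); non-latticeness is a genuine hypothesis in Lemma~\ref{L_Guiv0}.
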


\begin{proof}
Taking the expectation with respect to the $\sigma $-algebra generated by
the sequence $F_{n-N},F_{n-N+1},...,F_{n}$ and making the changes $%
F_{j}\rightarrow \tilde{F}_{j-i}$ we write 
\begin{align*}
\mathbf{P}\left( \mathcal{A}_{i}(n)\right) &=\mathbf{E}\left[
(1-F_{i,n}(0))\prod_{k=i+1}^{n-1}F_{k,n}(0)%
\prod_{k=1}^{i-1}F_{k,i}(F_{i,n}(0))\right] \\
&=\mathbf{E}\left[ (1-\tilde{F}_{0,N}(0))\prod_{k=1}^{N-1}\tilde{F}%
_{k,N}(0)\Psi _{i}(\tilde{F}_{0,N}(0))\right] ,
\end{align*}%
where 
\begin{equation*}
\Psi _{i}(s):=\mathbf{E}\left[ \prod_{k=1}^{i-1}F_{k,i}(s)\right] .
\end{equation*}

Observe that as the environments are i.i.d.,
\begin{equation*}
\mathbf{E}\left[ \prod_{k=1}^{i-1}F_{k,i}(s)\right] =\mathbf{E}\left[
\prod_{k=1}^{i-1}F_{k,0}(s)\right] .
\end{equation*}%
Now from Lemma 2 in \cite{DLVZ19}, 
\begin{equation*}
\prod_{k=1}^{i-1}F_{k,0}(s)=\frac{(1-s)^{-1}}{(1-s)^{-1}+%
\sum_{k=1}^{i-1}e^{-S_{k}}}.
\end{equation*}%
By a direct application of the second statement in Lemma \ref{L_Guiv0}, we
obtain that for any $s\in \lbrack 0,1)$, 
\begin{equation*}
\lim_{i\rightarrow \infty }\sqrt{i}\Psi _{i}(s):=\psi (s)\in (0,\infty ).
\end{equation*}%
Hence, by the dominated convergence theorem we conclude that 
\begin{equation*}
\lim_{n\rightarrow \infty }\sqrt{n}\mathbf{P}\left( \mathcal{A}%
_{n-N}(n)\right) =\mathbf{E}\left[ (1-\tilde{F}_{0,N}(0))\prod_{k=1}^{N-1}%
\tilde{F}_{k,N}(0)\psi (\tilde{F}_{0,N}(0))\right] .
\end{equation*}

This ends the proof of Lemma \ref{L_Largei} and justifies the second statement of Theorem 1.
\end{proof}

\section{The case of fixed $i$} \label{sec_fixed_i}

We now consider the case of a fixed $i$. Introduce the running maximum and
minimum of the associated random walk $\mathbf{S}$  
\begin{equation} \label{def_LM}
M_{n}:=\max \left( S_{1},...,S_{n}\right) ,\quad L_{n}:=\min \left(
S_{0},S_{1},...,S_{n}\right) 
\end{equation}%
and denote by 
\begin{equation}
\tau (n):=\min \{0\leq k\leq n:S_{k}=L_{n}\}  \label{def_tau(n)}
\end{equation}
the moment of the first random walk minimum up to time $n$.

Observe that by a Sparre-Andersen identity (see for instance \cite{GV2017} p. 68) and according to Proposition 2.1 in \cite{ABKV} there exist positive
constants $K_{1}$ and $K_{2}$ such that, as $n\rightarrow \infty $ 
\begin{equation}
\mathbf{E}\left[ e^{S_{n}};\tau (n)=n\right] =\mathbf{E}\left[
e^{S_{n}};M_{n}<0\right] \sim \frac{K_{1}}{n^{3/2}},\quad \mathbf{E}\left[
e^{-S_{n}};L_{n}\geq 0\right] \sim \frac{K_{2}}{n^{3/2}}.
\label{AsymConditional}
\end{equation}

It will allow us to prove the following result, which will be the main tool for proving the first statement of Theorem \ref{T_total}.

\begin{lemma}
\label{L_Gui2}Under Hypotheses A1-A2 for any $x\geq 0$%
\begin{equation*}
\lim_{n\rightarrow \infty }n^{3/2}\mathbf{E}\left[ \frac{1}{a_{n}+b_{n}}%
\frac{a_{n}}{x+a_{n}+b_{n}}\right] =:\Pi (x)>0.
\end{equation*}%
In addition, there exists a constant $C\in (0,\infty )$ such that, for all $%
x>0$ 
\begin{equation*}
\mathbf{E}\left[ \frac{1}{a_{n}+b_{n}}\frac{a_{n}}{x+a_{n}+b_{n}}\right]
\leq \frac{C}{n^{3/2}}.
\end{equation*}
\end{lemma}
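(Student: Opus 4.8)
The plan is to recognize $\mathcal{H}_{0,n}$ from Corollary~\ref{C_fractional} inside the expectation and to reduce everything to Lemma~\ref{L_Guiv0}. Concretely, write $a_n = e^{-S_n} = \Upsilon_n^{-1}$... actually, rather than $a_n$ alone, notice that the ratio $b_n/a_n = \sum_{k=0}^{n-1} e^{S_n - S_k}$ and $1/a_n = e^{S_n}$ are the natural objects: dividing numerator and denominator appropriately,
\[
\frac{1}{a_n+b_n}\frac{a_n}{x+a_n+b_n}
=\frac{a_n}{(a_n+b_n)(x+a_n+b_n)}.
\]
Multiplying top and bottom by $a_n^{-2}=e^{2S_n}$ gives
\[
\frac{a_n^{-1}}{(1+b_n/a_n)(x a_n^{-1}+1+b_n/a_n)}
=\frac{e^{S_n}}{\bigl(1+R_n\bigr)\bigl(x e^{S_n}+1+R_n\bigr)},\qquad R_n:=\sum_{k=0}^{n-1}e^{S_n-S_k}.
\]
By the duality $ (S_n - S_{n-j})_{0\le j\le n}\overset{d}{=}(S_j)_{0\le j\le n}$, we have $(e^{S_n},R_n)\overset{d}{=}(\Upsilon_n,1+\Lambda_{n-1})$ in the notation of Lemma~\ref{L_Guiv0} with $\eta_k=e^{X_k}$ (Hypothesis A2 gives $\mathbf E\log\eta_1=0$ and $\mathbf E[\eta_1+\eta_1^{-1}]<\infty$; Hypothesis A3 — or rather non-latticeness, which follows already from absolute continuity, but here we only need A2 plus the non-lattice assumption implicit in A3 being unnecessary since A1--A2 suffice as stated — actually the non-lattice hypothesis needed for Lemma~\ref{L_Guiv0} must be checked; under A1--A2 alone one invokes the version with $e^{X}$ non-lattice, which is part of the standing setup). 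So it remains to write the integrand as $g(\Upsilon_n)h(\Lambda_{n-1})$ for suitable $g,h$ satisfying the hypotheses of Lemma~\ref{L_Guiv0}.

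The obstacle is that the integrand is \emph{not} of the clean product form $g(\Upsilon_n)h(\Lambda_n)$: the factor $x e^{S_n}+1+R_n$ mixes $\Upsilon_n$ and $\Lambda_{n-1}$. The way around this is to split on the size of $e^{S_n}=\Upsilon_n$. On the event $\{\Upsilon_n\le 1\}$ one has $x\Upsilon_n\le x$, so
\[
\frac{1}{x\Upsilon_n+1+\Lambda_{n-1}}\in\Bigl[\frac{1}{x+1+\Lambda_{n-1}},\frac{1}{1+\Lambda_{n-1}}\Bigr],
\]
and on this event the integrand is sandwiched between $\Upsilon_n\,h_x(\Lambda_{n-1})$ and $\Upsilon_n\,h_0(\Lambda_{n-1})$ with $h_y(c):=[(1+c)(y+1+c)]^{-1}$; each $h_y$ satisfies $h_y(c)\le C(1+c)^{-2}$ and is Lipschitz, and $g(a)=a$ satisfies $g(a)\le Ca$, so Lemma~\ref{L_Guiv0} applies directly with $\alpha=1$, $\beta=2$, $\varepsilon=1$. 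On the complementary event $\{\Upsilon_n>1\}$ one simply bounds the whole integrand by $\Upsilon_n^{-1}\mathbf 1_{\{\Upsilon_n>1\}}$ (since $(1+R_n)(x\Upsilon_n+1+R_n)\ge 1$), and shows $n^{3/2}\mathbf E[\Upsilon_n^{-1};\Upsilon_n>1]\to 0$; this last fact follows from $\mathbf E[\Upsilon_n^{-1};\Upsilon_n>1]=\mathbf E[e^{-S_n};S_n>0]\le \mathbf E[e^{-S_n}]=(\mathbf E e^{-X})^n$, which decays geometrically under A2 unless $\mathbf E e^{-X}\ge 1$; since $\mathbf E e^{-X}\ge e^{-\mathbf E X}=1$ by Jensen with equality only in the degenerate case, one instead argues via the local limit theorem / the bound $\mathbf E[e^{-S_n};L_n\ge 0]\sim K_2 n^{-3/2}$ together with $\mathbf E[e^{-S_n};S_n>0,L_n<0]$ being negligible — more cleanly, use that on $\{\Upsilon_n>1\}$ we also have $b_n\ge e^{-\min_{k<n}S_k}$ large when the walk dips, and a direct estimate $\mathbf E[\tfrac{a_n}{(a_n+b_n)^2};\Upsilon_n>1]\le \mathbf E[\tfrac{1}{a_n+b_n}\wedge 1;\ldots]$ is $O(n^{-3/2})$ by the methods behind \eqref{AsymConditional}.

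Assembling: the limit $\Pi(x)$ exists and equals $\lim n^{3/2}\mathbf E[\Upsilon_n h_x(\Lambda_{n-1});\Upsilon_n\le1]$, which by Lemma~\ref{L_Guiv0} (applied to the truncated functions $a\mapsto a\wedge 1$ and $h_x$, both still meeting the growth/Lipschitz conditions) is a positive finite constant — positivity because $\Pi(x)\ge \lim n^{3/2}\mathbf E[\Upsilon_n h_x(\Lambda_{n-1});\Upsilon_n\le1]>0$ since the integrand is strictly positive on a set of positive probability and the analogous limit with $x=0$ is $\lim n^{3/2}\mathbf E[\mathcal H_{0,n}]$-type quantity known to be positive (it underlies \eqref{AsumBegin}). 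The uniform-in-$x$ bound $\mathbf E[\tfrac{1}{a_n+b_n}\tfrac{a_n}{x+a_n+b_n}]\le C n^{-3/2}$ is immediate from the same split: the $\{\Upsilon_n\le1\}$ part is bounded by $\mathbf E[\Upsilon_n h_0(\Lambda_{n-1})]\le C n^{-3/2}$ uniformly in $x$ since $h_x\le h_0$, and the $\{\Upsilon_n>1\}$ part is $O(n^{-3/2})$ uniformly in $x$ as shown. The main technical point to get right is the negligibility of the $\{\Upsilon_n>1\}$ contribution with the correct $n^{-3/2}$ rate; everything else is a routine verification of the hypotheses of Lemma~\ref{L_Guiv0}.
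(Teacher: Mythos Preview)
Your detour through duality is unnecessary and creates a phantom obstacle. If instead you set $\eta_k := e^{-X_k}$ in Lemma~\ref{L_Guiv0}, then $\Upsilon_n = e^{-S_n} = a_n$ and $1+\Lambda_n = 1+\sum_{k=1}^n e^{-S_k} = \sum_{k=0}^n e^{-S_k} = a_n + b_n$. Hence
\[
\frac{1}{a_n+b_n}\,\frac{a_n}{x+a_n+b_n}
= \frac{\Upsilon_n}{(1+\Lambda_n)(1+x+\Lambda_n)} = g(\Upsilon_n)\,h(\Lambda_n)
\]
with $g(y)=y$ and $h(y)=[(1+y)(1+x+y)]^{-1}$. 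This is already in the product form required by Lemma~\ref{L_Guiv0}, and the first statement follows in one line --- this is exactly what the paper does. By dualizing to $\eta_k=e^{X_k}$ you turn the harmless shift $x$ into the mixed term $x\Upsilon_n$ and then have to undo the damage.

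More seriously, your workaround does not establish the limit. The sandwich on $\{\Upsilon_n\le 1\}$ only gives
\[
\Upsilon_n h_x(\Lambda_{n-1}) \le (\text{integrand}) \le \Upsilon_n h_0(\Lambda_{n-1}),
\]
so after taking $n^{3/2}\mathbf{E}[\,\cdot\,]$ you obtain $c(g,h_x)\le \liminf \le \limsup \le c(g,h_0)$, and these two constants are \emph{different} for $x>0$. You then assert ``the limit $\Pi(x)$ exists and equals $\lim n^{3/2}\mathbf E[\Upsilon_n h_x(\Lambda_{n-1});\Upsilon_n\le1]$,'' but nothing you wrote justifies this equality. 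The negligibility of the $\{\Upsilon_n>1\}$ piece is also left as a sequence of abandoned attempts rather than an argument.

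For the uniform bound the paper's route is again shorter: since $a_n+b_n = \sum_{k=0}^n e^{-S_k} \ge e^{-S_{\tau(n)}}$, one has
\[
\frac{1}{a_n+b_n}\,\frac{a_n}{x+a_n+b_n} \le e^{-S_n} e^{2S_{\tau(n)}},
\]
and decomposing $\mathbf{E}[e^{-S_n+2S_{\tau(n)}}]=\sum_{k=0}^n \mathbf{E}[e^{S_k};\tau(k)=k]\,\mathbf{E}[e^{-S_{n-k}};L_{n-k}\ge 0]$ together with \eqref{AsymConditional} gives $C n^{-3/2}$ directly, uniformly in $x$.
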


\begin{proof} The first statement follows from the first statement of Lemma \ref{L_Guiv0} with the functions $g(y)=y$ and $h(y)= 1/((1+y)(1+x+y))$.

To prove the second one, observe that
\begin{equation*}
\mathbf{E}\left[ \frac{1}{a_{n}+b_{n}}\frac{a_{n}}{x+a_{n}+b_{n}}\right]
\leq \mathbf{E}\left[ e^{-S_{n}}e^{2S_{\tau (n)}}\right] .
\end{equation*}%
Using (\ref{AsymConditional}), the right-hand side can be evaluated from above 
\begin{eqnarray*}
\mathbf{E}\left[ e^{-S_{n}}e^{2S_{\tau (n)}}\right]  &=&\sum_{k=0}^{n}%
\mathbf{E}\left[ e^{S_{k}-S_{n}}e^{S_{k}};\tau (n)=k\right]  \\
&=&\sum_{k=0}^{n}\mathbf{E}\left[ e^{S_{k}};\tau (k)=k\right] \mathbf{E}%
\left[ e^{S_{n-k}};L_{n-k}\geq 0\right]  \\
&\leq &C\sum_{k=0}^{n}\frac{1}{\left( k+1\right) ^{3/2}}\frac{1}{\left(
n-k+1\right) ^{3/2}}\leq \frac{C}{n^{3/2}},
\end{eqnarray*}%
as desired.
\end{proof} 

We can now prove the first statement of Theorem \ref{T_total}. By definition
\begin{eqnarray*}
\mathcal{H}_{i,n} &=&\frac{e^{-S_{i}}}{\sum_{k=i}^{n}e^{-S_{k}}}\frac{%
e^{-S_{n}}}{\sum_{k=1}^{n}e^{-S_{k}}} \\
&\mathcal{=}&\frac{1}{1+\sum_{k=i+1}^{n}e^{S_{i}-S_{k}}}\frac{%
e^{-(S_{n}-S_{i})}}{1+\sum_{k=1}^{i-1}e^{S_{i}-S_{k}}+%
\sum_{k=i+1}^{n}e^{S_{i}-S_{k}}}.
\end{eqnarray*}%
Taking the expectation with respect to the sequence $S_{0},S_{1},...,S_{i}$
we have 
\begin{align*}
\mathbf{P}\left( \mathcal{A}_{i}(n)\right) &=\mathbf{E}\left[ \frac{a_{i}}{%
a_{n}+b_{n}-b_{i+1}}\frac{a_{n}}{a_{n}+b_{n}-b_{1}}\right] \\
&=\mathbf{E}\left[ T_{n-i}\left(
\sum_{k=1}^{i-1}e^{S_{i}-S_{k}}\right) \right],
\end{align*}%
where for $m \in \N$,
\begin{equation*}
T_{m}(x):=\mathbf{E}\left[ e^{-S_{m}}\frac{1}{1+\Lambda_{m}}\frac{1}{%
1+x+\Lambda_{m}}\right] .
\end{equation*}%
Now applying Lemma \ref{L_Gui2} and using the dominated convergence theorem
we conclude that%
\begin{eqnarray*}
\lim_{n\rightarrow \infty }n^{3/2}\mathbf{P}\left( \mathcal{A}_{i}(n)\right)
&=&\mathbf{E}\left[ \lim_{n\rightarrow \infty
}n^{3/2}T_{n-i}\left( \sum_{k=1}^{i-1}e^{S_{i}-S_{k}}\right) \right] \\
&=&\mathbf{E}\left[ \Pi \left(
\sum_{k=1}^{i-1}e^{S_{i}-S_{k}}\right) \right] =:w_{i}\in \left( 0,\infty
\right) .
\end{eqnarray*}

\section{ The case $min$ ($i,n-i)\rightarrow \infty $} \label{sec_ini_to_infty}

The proof of the third statement of Theorem \ref{T_total} requires a different approach which we develop
in the next subsection.

\subsection{Measures $\mathbf{P}_{x}^{+}$ and $\mathbf{P}_{x}^{-}$}

As mentioned before, the survival of a clan is intimately related to the value of the running minimum of the associated random walk $\mathbf{S}$ initiated at 
its immigration time. More generally, conditioning on the event that the running minimum is not too small turned to be a very powerful tool in the study of the survival 
probability of BPRE's (see \cite{4h} for instance). We will use a similar technique, and to this aim
we need to perform two changes of measure using the
right-continuous functions $U:\mathbb{R}$ $\rightarrow \lbrack 0,\infty )$
and $V:\mathbb{R}$ $\rightarrow \lbrack 0,\infty )$ specified by
\begin{equation*}
U(x):=1+\sum_{n=1}^{\infty }\mathbf{P}\left( S_{n}\geq -x,M_{n}<0\right)
,\quad x\geq 0,
\end{equation*}
\begin{equation*}
V(x):=1+\sum_{n=1}^{\infty }\mathbf{P}\left( S_{n}<-x,L_{n}\geq 0\right)
,\quad x\leq 0.
\end{equation*}

It is known (see, for instance, \cite{4h}\ and \cite{ABKV}) that for any
oscillating random walk 
\begin{equation}
\mathbf{E}\left[ U(x+X);X+x\geq 0\right] =U(x),\quad x\geq 0,  \label{Mes1}
\end{equation}%
and
\begin{equation}
\mathbf{E}\left[ V(x+X);X+x<0\right] =V(x),\quad x\leq 0.  \label{Mes2}
\end{equation}

Moreover, we have the following asymptotics (see Lemma 2.1 in \cite{4h}): there exist two constants $C_1$ and $C_2$ such that for every $x \geq 0$, $m \in \N$
\begin{equation} \label{Asym1}
      \mathbf{P}(L_m \geq -x) \leq C_1 U(x) \mathbf{P}(L_m \geq 0) \leq U(x)\frac{C_2}{\sqrt{m}};
     \end{equation}
and according to Corollary 3 in \cite{Don12} there exists a constant $C_3$ such that as $m\rightarrow \infty $ 
\begin{equation}
\mathbf{P}\left( L_{m}\geq -x\right) \sim U(x)\mathbf{P}\left( L_{m}\geq
0\right) \sim U(x)\frac{C_3}{\sqrt{m}}  \label{Asym0}
\end{equation}%
uniformly in $0\leq x\ll \sqrt{m}$.

Let $\mathcal{E}=\left\{ F_{n},n\in \N\right\} $ be a random environment and
let $\mathcal{F}_{n},n\in \N,$ be the $\sigma $-field of events generated by
the random variables $F_{1},F_{2},...,F_{n}$ and the sequence $%
Y_{0},Y_{1},...,Y_{n}$. The set of these $\sigma $-fields forms a filtration 
$\mathfrak{F}$. The increment $X_{n},n\in \N,$ of the random walk $%
\mathbf{S}$ are measurable with respect to the $\sigma $-field $\mathcal{F}%
_{n}$. Using the martingale property (\ref{Mes1})-(\ref{Mes2}) of $U,V$ we
introduce in now a standard way (see, for instance, \cite{GV2017}, Chapter
7) a sequence of probability measures $\{ \mathbf{P}_{(n)}^{+},n\geq 1\} $
on the $\sigma $-field $\mathcal{F}_{n}$ by means of the densities 
\begin{equation*}
d\mathbf{P}_{(n)}^{+}:=U(S_{n})I\left\{ L_{n}\geq 0\right\} d\mathbf{P}.
\end{equation*}%
This and Kolmogorov's extension theorem show that, on a suitable probability
space there exists a probability measure $\mathbf{P}^{+}$ on the $\sigma $%
-field $\mathfrak{F}$ such that 
\begin{equation}
\mathbf{P}^{+}|\mathcal{F}_{n}=\mathbf{P}_{(n)}^{+},\ n\in \N.
\label{DefMeasures}
\end{equation}

In the sequel we allow for arbitrary initial value $S_{0}=x$. Then, we write 
$\mathbf{P}_{x}$ and $\mathbf{E}_{x}$ for the corresponding probability
measures and expectations. Thus, $\mathbf{P}=\mathbf{P}_{0}.$ With this
notation, (\ref{DefMeasures}) may be rewritten as follows: for every $%
\mathcal{F}_{n}$-measurable random variable $O_{n}$ 
\begin{equation*}
\mathbf{E}_{x}^{+}\left[ O_{n}\right] :=\frac{1}{U(x)}\mathbf{E}_{x}\left[
O_{n}U(S_{n});L_{n}\geq 0\right] ,\ x\geq 0.
\end{equation*}

Similarly, $V$ gives rise to probability measures $\mathbf{P}_{x}^{-},x\leq
0 $, which can be defined via: 
\begin{equation*}
\mathbf{E}_{x}^{-}\left[ O_{n}\right] :=\frac{1}{V(x)}\mathbf{E}_{x}\left[
O_{n}V(S_{n});M_{n}<0\right] ,\ x\leq 0.
\end{equation*}

By means of the measures $\mathbf{P}_{x}^{+}$, $\mathbf{P}_{x}^{-}$, we
investigate the limit behavior of certain conditional distributions.

For $\lambda >0$, let $\mu _{\lambda }$ and $\nu _{\lambda }$ be the
probability measures on $[0,+\infty )$ and $(-\infty ,0)$ given by their
densities 
\begin{equation*}
\mu _{\lambda }(dz)\ :=\ c_{1}e^{-\lambda z}U(z)1_{\{z\geq 0\}}\,dz\ ,\quad
\nu _{\lambda }(dz)\ :=\ c_{2}e^{\lambda z}V(z)1_{\{z<0\}}\,dz
\end{equation*}%
with 
\begin{equation}  \label{def_ci}
c_{1}^{-1}=c_{1\lambda }^{-1}:=\int_{0}^{\infty }e^{-\lambda
z}U(z)\,dz,\quad c_{2}^{-1}=c_{2\lambda }^{-1}:=\int_{-\infty
}^{0}e^{\lambda z}V(z)\,dz.
\end{equation}

The next two lemmas are natural modifications of Lemmas 7.3 and 7.5
in \cite{GV2017}, Chapter 7. We use the agreement $\delta n:=\lfloor \delta
n\rfloor $ for $0<\delta <1$ in their formulations.

\begin{lemma}
\label{L_Newp41} Take $0<\delta <1$. Let $G_{n}:=g_{n}(F_{1},\ldots
,F_{\delta n})$, $n \in \N$, be random variables with values in an Euclidean
(or polish) space $\mathcal{G}$ such that, as $n \to \infty$
\begin{equation*}
G_{n}\ \rightarrow \ G_{\infty }\quad \mathbf{P}^{+}\text{-a.s.}
\end{equation*}%
for some $\mathcal{G}$-valued random variable $G_{\infty }$. Also let $%
H_{n}:=h_{n}(F_{1},\ldots ,F_{\delta n})$, $n\geq 1$, be random variables
with values in an Euclidean (or polish) space $\mathcal{H}$ such that, as $n \to \infty$,
\begin{equation*}
H_{n}\ \rightarrow \ H_{\infty }\quad \mathbf{P}_{x}^{-}\text{-a.s. }
\end{equation*}%
for all $x\leq 0$ and some $\mathcal{H}$-valued random variable $H_{\infty }$. Denote 
\begin{equation*}
\tilde{H}_{n}:=h_{n}(F_{n},\ldots ,F_{n-\delta n+1})\ .
\end{equation*}%
Let, further $\psi (z),z\geq 0$, be a nonnegative continuous function such
that $\psi (z)e^{-\theta z},z\geq 0$, is bounded for some $\theta >0$. Then,
for any bounded continuous function $\varphi :\mathcal{G}\times \mathcal{H}%
\times \mathbb{R}\rightarrow \mathbb{R}$, and $\lambda >\theta $ 
\begin{align}
 \lim_{n\rightarrow \infty }&\frac{\mathbf{E}[\psi (S_{n})\varphi (G_{n},%
\tilde{H}_{n},S_{n})e^{-\lambda S_{n}}\;;\;L_{n}\geq 0]}{\mathbf{E}%
[e^{-\lambda S_{n}};L_{n}\geq 0]\ }\;  \notag \\
& =\ \iiint \psi (-z)\varphi (u,v,-z)\mathbf{P}^{+}\left( G_{\infty }\in
du\right) \mathbf{P}_{z}^{-}\left( H_{\infty }\in dv\right) \nu _{\lambda
}(dz)\ .  \label{Cond1}
\end{align}
\end{lemma}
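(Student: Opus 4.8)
The plan is to rewrite the expectation on the left-hand side of \eqref{Cond1} by splitting the environment into the first $\delta n$ coordinates, the last $\delta n$ coordinates, and the middle block, and then invoke the known asymptotics \eqref{Asym0} together with the change-of-measure formulas for $\mathbf{P}^{+}$ and $\mathbf{P}^{-}_{x}$. First I would use the duality (time-reversal) of the random walk to replace $\tilde H_n = h_n(F_n,\ldots,F_{n-\delta n+1})$ by a quantity built from the reversed walk $\widehat S_k := S_n - S_{n-k}$; under reversal the event $\{L_n\ge 0\}$ and the factor $e^{-\lambda S_n}$ transform in a controlled way, and $\widehat S$ has the same law as $S$. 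This puts $G_n$ and $\tilde H_n$ at opposite ends of a single trajectory of length $n$ conditioned to stay nonnegative, with the running minimum over $[0,n]$ forcing both endpoints to be "high" relative to the bulk.

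Next I would condition on the value $S_{\delta n}$ (which governs $G_n$) and on $S_n - S_{n-\delta n}$ (which governs $\tilde H_n$), and on $S_n$ itself (which enters $\psi$ and the exponential). The standard decomposition is: on $\{L_n \ge 0\}$, write the probability as the product of (i) a factor for the walk on $[0,\delta n]$ staying nonnegative and ending near a point $u$-configuration, (ii) a factor for the walk on $[\delta n, n-\delta n]$ staying nonnegative, and (iii) a factor for the reversed walk on the last block. For the middle block one applies \eqref{Asym0}: the probability that a walk started at height $y_1$ and ending at height $y_2$ stays nonnegative over a time interval of length $\sim(1-2\delta)n$ is asymptotically $U(y_1)V(-y_2)\cdot C/n^{3/2}$ up to the appropriate normalization, and crucially the $U$ and $V$ factors are exactly what convert the raw measure $\mathbf P$ on the two end blocks into $\mathbf P^{+}$ on the left block and $\mathbf P^{-}_{z}$ on the right block. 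The exponential weight $e^{-\lambda S_n}$ and the function $\psi(S_n)$ localize $S_n = -z$ with the density proportional to $e^{\lambda z}V(z)$, i.e. $\nu_\lambda(dz)$, after dividing by $\mathbf E[e^{-\lambda S_n}; L_n \ge 0]$; this is precisely where the hypothesis $\lambda > \theta$ is used, to guarantee integrability of $\psi(-z)e^{\lambda z}$ against $V(z)\,dz$ and hence a genuine probability measure in the limit.

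Having reduced everything to the two end blocks, I would use the a.s.\ convergence $G_n \to G_\infty$ under $\mathbf P^{+}$ and $\tilde H_n \to H_\infty$ under $\mathbf P^{-}_x$ (the latter via duality, since $\tilde H_n$ on the reversed walk is distributed like $H_n$ on a walk conditioned by $V$) together with bounded convergence for the bounded continuous $\varphi$, to pass to the limit and obtain the triple integral. The factor $\psi(S_n)$ is handled by the growth bound $\psi(z)e^{-\theta z}$ bounded: it gives a dominating function $e^{(\theta-\lambda)S_n}$ which is integrable against the conditioned law because $\lambda > \theta$, so dominated convergence applies even though $\psi$ itself is unbounded.

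\textbf{Main obstacle.} The delicate point is making the block decomposition rigorous and uniform: one must show that the joint law of $\bigl(G_n, \tilde H_n, S_n, S_{\delta n}, S_{n-\delta n}\bigr)$ under $\mathbf P(\,\cdot\,; L_n \ge 0)$, suitably renormalized, genuinely factorizes in the limit into the product $\mathbf P^{+}(G_\infty \in du)\,\mathbf P^{-}_z(H_\infty \in dv)\,\nu_\lambda(dz)$, with no residual correlation through the middle block. This requires the uniform version of \eqref{Asym0} (valid for $0 \le x \ll \sqrt m$) plus a truncation argument discarding the event that $S_{\delta n}$ or $S_n - S_{n-\delta n}$ is of order $\sqrt n$ — an event of negligible conditional probability — so that the middle-block asymptotic can be applied with controlled error. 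Since the statement explicitly says this is a "natural modification" of Lemmas 7.3 and 7.5 in \cite{GV2017}, I expect to follow that scheme closely, the only genuinely new ingredient being the unbounded weight $\psi(S_n)$, which is absorbed by the integrability margin $\lambda > \theta$ as indicated above.
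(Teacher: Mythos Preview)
Your outline is essentially a from-scratch proof of Lemma 7.3 in \cite{GV2017} with the extra weight $\psi$ built in, and as such it would work. But you are doing far more than the paper does. The paper's actual argument is a two-line reduction to the cited result, not a reconstruction of it: write $\lambda=\theta+\delta$ with $\delta>0$, observe that $\psi(z)e^{-\theta z}\varphi(u,v,z)$ is a \emph{bounded} continuous function of $(u,v,z)$ (by the hypothesis on $\psi$), and then apply Lemma 7.3 of \cite{GV2017} directly with this bounded test function and the exponential $e^{-\delta S_n}$. The limit comes out as an integral against $\nu_\delta(dz)$ with integrand carrying an extra factor $e^{\theta z}$; unwinding the definitions \eqref{def_ci} shows that $e^{\theta z}\nu_\delta(dz)$, renormalised, is precisely $\nu_\lambda(dz)$, and the renormalising constant cancels against the corresponding ratio $\mathbf{E}[e^{-\delta S_n};L_n\ge 0]/\mathbf{E}[e^{-\lambda S_n};L_n\ge 0]$ in the denominator. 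That is the whole proof.

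In other words, you correctly identified that the only new ingredient is the unbounded weight $\psi$ and that the margin $\lambda>\theta$ is what handles it, but you missed the clean way to exploit that margin: split the exponent rather than redo the block decomposition, duality, truncation, and uniform local-limit estimates. Your approach buys nothing extra here, since all of that machinery is already packaged inside the cited lemma; the paper's trick buys a proof that fits in a paragraph.
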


The following lemma is a counterpart.

\begin{lemma}
\label{L_Newp42} Let $G_{n},H_{n},\tilde{H}_{n}$, $ n \in \N $ be
as in Lemma \ref{L_Newp41}, now fulfilling, as $n \to \infty$
\begin{equation*}
G_{n}\ \rightarrow \ G_{\infty }\quad \mathbf{P}_{x}^{+}\text{-a.s.}
,\quad H_{n}\ \rightarrow \ H_{\infty }\quad \mathbf{P}^{-}\text{-a.s.}
\end{equation*}%
for all $x\geq 0$. Let, further $\psi (z),z\geq 0$, be a nonnegative
continuous function such that $\psi (z)e^{\theta z},z<0$, is bounded for some 
$\theta >0$. Then, for any bounded continuous function $\varphi :\mathcal{G}%
\times \mathcal{H}\times \mathbb{R}\rightarrow \mathbb{R}$ and for $\lambda
>\theta $ 
\begin{align}
& \frac{\mathbf{E}[\psi (S_{n})\varphi (G_{n},\tilde{H}_{n},S_{n})e^{\lambda
S_{n}}\;;\;\tau (n)=n]}{\mathbf{E}[e^{\lambda S_{n}};\tau(n)=n]}\;  \notag
\\
& \rightarrow \ \iiint \psi (-z)\varphi (u,v,-z)\mathbf{P}_{z}^{+}\left(
G_{\infty }\in du\right) \mathbf{P}^{-}\left( H_{\infty }\in dv\right) \mu
_{\lambda }(dz)\ .  \label{Cond3}
\end{align}
\end{lemma}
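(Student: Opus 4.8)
The plan is to prove Lemma~\ref{L_Newp42} along the same lines as Lemma~\ref{L_Newp41}, following the template of Lemmas~7.3 and 7.5 in \cite{GV2017}, Chapter~7. The role played there by the conditioning $\{L_n\ge 0\}$ is played here by $\{\tau(n)=n\}$, on which $S_n$ is the (strict) running minimum on $[0,n]$; correspondingly the weight is now $e^{+\lambda S_n}$ instead of $e^{-\lambda S_n}$, so that the functions $U$ and $V$ (equivalently, the measures $\mathbf{P}^{+}$ and $\mathbf{P}^{-}$), as well as the measures $\mu_\lambda$ and $\nu_\lambda$, exchange their roles. Fix $\delta\in(0,1)$; we may assume $\delta<1/2$, the remaining range being handled by an additional splitting as in \cite{GV2017}, Chapter~7. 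I would decompose $[0,n]$ into the blocks $[0,\delta n]$, $[\delta n,n-\delta n]$, $[n-\delta n,n]$ and apply the Markov property of $\mathbf{S}$ at times $\delta n$ and $n-\delta n$, noting that $G_n$ is $\mathcal{F}_{\delta n}$-measurable, that $\tilde H_n$ is a functional of the increments over the last block taken in the order $F_n,F_{n-1},\dots$ expected by $h_n$, and that on $\{\tau(n)=n\}$ one has $S_k>S_n$ for all $k<n$. It is convenient to prove the claim first for $\varphi$ of product form $\varphi_1(u)\varphi_2(v)$ and to recover the general bounded continuous case by approximation.

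On the first block, set $z:=-S_n\ge 0$; since $S_k>S_n$ for $0\le k\le\delta n$, the shifted walk $(S_k+z)_{0\le k\le\delta n}$ starts at $z$ and stays positive, so I would introduce the density $U(S_{\delta n}+z)\,I\{S_k+z\geq 0,\ 0\le k\le\delta n\}/U(z)$ of the change of measure $\mathbf{P}_z^{+}$, use the martingale identity \eqref{Mes1}, and invoke the hypothesis $G_n\to G_\infty$ $\mathbf{P}_z^{+}$-a.s.; this replaces the $G_n$-dependence, in the limit, by integration against $\mathbf{P}_z^{+}(G_\infty\in du)$, leaving behind the renewal factor $U(z)$. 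On the last block, reverse time within the block by putting $\check S_j:=S_n-S_{n-j}$, $0\le j\le\delta n$: the constraint $S_{n-j}>S_n$ becomes $\check S_j<0$ with $\check S_0=0$, so, using \eqref{Mes2} and the hypothesis $\tilde H_n\to H_\infty$ $\mathbf{P}^{-}$-a.s., the $\tilde H_n$-dependence becomes, in the limit, integration against $\mathbf{P}^{-}(H_\infty\in dv)$. The middle block, of length $(1-2\delta)n\to\infty$, is then treated by a local limit theorem for the associated walk, exactly as in \cite{GV2017}, Chapter~7, using \eqref{AsymConditional}, \eqref{Asym1}, \eqref{Asym0} and the Sparre--Andersen/duality identity: after division by $\mathbf{E}[e^{\lambda S_n};\tau(n)=n]$ it produces the limiting law of $-S_n$, which together with the factor $U(z)$ from the first block, the weight $e^{\lambda S_n}\to e^{-\lambda z}$ and the normalising constant yields exactly $\mu_\lambda(dz)=c_{1\lambda}e^{-\lambda z}U(z)\,1_{\{z\geq 0\}}\,dz$, while $\psi(S_n)\to\psi(-z)$. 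Assembling the three blocks gives the right-hand side of \eqref{Cond3}.

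The main obstacle is, as in all such gluing arguments, the justification of the passage to the limit under the expectation, which has two delicate aspects. First, the $e^{\lambda S_n}$-weighted quantities must be dominated uniformly in $n$; this is where $\lambda>\theta$ is used, since the bound $\psi(-z)\le Ce^{\theta z}$ together with $U(z)=O(z)$ gives $\int_0^\infty\psi(-z)e^{-\lambda z}U(z)\,dz<\infty$, while atypically large fluctuations of the walk along the blocks are absorbed by the uniform estimate \eqref{Asym1}. Second, one must upgrade the $\mathbf{P}_z^{+}$-a.s.\ convergence of $G_n$ and the $\mathbf{P}^{-}$-a.s.\ convergence of $\tilde H_n$ to convergence of the corresponding conditional laws under the level-$n$ conditioning; this uses that $\mathbf{P}_z^{+}$ and $\mathbf{P}^{-}$ coincide on $\mathcal{F}_{\delta n}$ with the respective tilted measures, that $\delta n\to\infty$, and a suitable coupling (Skorokhod's representation), so that the conditional law of $G_n$ given the first block converges to $\mathbf{P}_z^{+}(G_\infty\in\cdot)$ and that of $\tilde H_n$ given the last block to $\mathbf{P}^{-}(H_\infty\in\cdot)$, uniformly enough to be inserted under the middle-block local limit theorem. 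Both aspects are settled by the computations already performed for Lemmas~7.3 and 7.5 in \cite{GV2017} and for Lemma~\ref{L_Newp41} above, which carry over here with only the notational changes indicated.
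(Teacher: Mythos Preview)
Your plan is essentially to reprove Lemma~7.5 of \cite{GV2017} from scratch, adapted so as to carry the extra factor $\psi(S_n)$ along through the three-block decomposition. This would work, and the ingredients you list (the block decomposition at $\delta n$ and $n-\delta n$, the changes of measure to $\mathbf{P}_z^{+}$ and $\mathbf{P}^{-}$ via \eqref{Mes1}--\eqref{Mes2}, the local limit theorem on the middle block, and the domination argument using $\lambda>\theta$) are exactly what the proof of Lemma~7.5 in \cite{GV2017} uses. So the outline is sound, if laborious.

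The paper, however, proceeds much more simply: it reduces Lemma~\ref{L_Newp42} to Lemma~7.5 of \cite{GV2017} by a one-line trick rather than redoing the whole construction. Write $\lambda=\theta+\delta$ with $\delta>0$. On the event $\{\tau(n)=n\}$ one has $S_n<0$, and by hypothesis $\psi(z)e^{\theta z}$ is bounded for $z<0$; hence $\tilde\varphi(u,v,z):=\psi(z)e^{\theta z}\varphi(u,v,z)$ is a bounded continuous function to which Lemma~7.5 of \cite{GV2017} applies directly with parameter $\delta$ in place of $\lambda$. The only remaining computation is to rewrite the resulting limit, which involves $\mu_\delta$, in terms of $\mu_\lambda$: since $\mu_\delta(dz)=c_{1\delta}e^{-\delta z}U(z)\,dz$ and the integrand acquires an extra $e^{-\theta z}$ from $\tilde\varphi$, one recovers $e^{-\lambda z}U(z)\,dz$ up to the normalising constant, and the ratio of normalisations is absorbed by writing the denominator $\mathbf{E}[e^{\lambda S_n};\tau(n)=n]$ in the same split form. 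This avoids entirely the block decomposition, the coupling arguments, and the uniform-domination issues you flag as ``delicate aspects''. Your approach buys a self-contained argument independent of the black box in \cite{GV2017}; the paper's approach buys a two-line proof.
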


\begin{proof}
The proofs of the two statements are very similar. We show only the first
one. Write $\lambda =\theta +\delta ,$ $\delta >0$. Since $\psi
(z)e^{-\theta z}\varphi (x,y,z)$ is a bounded continuous function, we may
apply Lemma 7.3 in \cite{GV2017}, Chapter 7 and using the definition of $\nu
_{\lambda }(dz)$ conclude that, as $n\rightarrow \infty $

\begin{multline*}
\frac{\mathbf{E}[\psi (S_{n})\varphi (G_{n},\tilde{H}_{n},S_{n})e^{-\lambda
S_{n}}\;;\;L_{n}\geq 0]}{\mathbf{E}[e^{-\lambda S_{n}};L_{n}\geq 0]} \\
=\frac{\mathbf{E}[\psi (S_{n})e^{-\theta S_{n}}\varphi (G_{n},\tilde{H}%
_{n},S_{n})e^{-\delta S_{n}}\;;\;L_{n}\geq 0]}{\mathbf{E}[e^{-\delta
S_{n}};L_{n}\geq 0]}\frac{\mathbf{E}[e^{-\delta S_{n}};L_{n}\geq 0]}{\mathbf{%
E}[e^{-\lambda S_{n}};L_{n}\geq 0]} \\
\rightarrow \ \frac{\iiint e^{\theta z}\psi (-z)\varphi (u,v,-z)\mathbf{P}%
^{+}\left( G_{\infty }\in du\right) \mathbf{P}_{z}^{-}\left( H_{\infty }\in
dv\right) \nu _{\delta }(dz)}{\int_{-\infty }^{0}e^{\theta z}\nu _{\delta
}(dz)} \\
=\frac{\iiint e^{\theta z}\psi (-z)\varphi (u,v,-z)\mathbf{P}^{+}\left(
G_{\infty }\in du\right) \mathbf{P}_{z}^{-}\left( H_{\infty }\in dv\right)
e^{\delta z}V(z)\,dz}{\int_{-\infty }^{0}e^{\delta z}V(z)\,dz}\frac{%
\int_{-\infty }^{0}e^{\delta z}V(z)\,dz}{\int_{-\infty }^{0}e^{(\theta
+\delta )z}V(z)\,dz} \\
=\iiint \psi (-z)\varphi (u,v,-z)\mathbf{P}^{+}\left( G_{\infty }\in
du\right) \mathbf{P}_{z}^{-}\left( H_{\infty }\in dv\right) \nu _{\lambda
}(dz),
\end{multline*}%
as desired.
\end{proof}

\begin{lemma}
\label{L_Condx}Let Hypotheses $A1-A3$ be valid and $O_{n}$ be a sequence of
uniformly bounded random variables adapted to the filtration $\mathfrak{F}%
=\left( \mathcal{F}_{n},n\geq 1\right) $ such that, as $n\rightarrow \infty $%
\begin{equation*}
O_{n}\rightarrow O_{\infty }\text{ \ \ \ }\mathbf{P}_{x}^{+}\text{ a.s.}
\end{equation*}%
for all $x$ from a finite interval $[0,N]$. Then, as $n\rightarrow \infty $ 
\begin{equation}
\mathbf{E}\left[ O_{n}|L_{n}\geq -x\right] \rightarrow \mathbf{E}_{x}^{+}%
\left[ O_{\infty }\right]  \label{Cond+}
\end{equation}%
uniformy in $x\in \lbrack 0,N].$

Let 
\begin{equation*}
O_{n}\rightarrow O_{\infty }\text{ \ \ \ }\mathbf{P}_{-x}^{-}\text{ a.s.}
\end{equation*}%
for all $x$ from a finite interval $[0,N]$. Then, as $n\rightarrow \infty $ 
\begin{equation}
\mathbf{E}\left[ O_{n}|M_{n}<-x\right] \rightarrow \mathbf{E}_{-x}^{-}\left[
O_{\infty }\right]  \label{Cond-}
\end{equation}%
uniformly in $x\in \lbrack 0,N].$
\end{lemma}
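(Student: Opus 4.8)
The plan is to reduce the conditional expectations with respect to the conditioned events $\{L_n \geq -x\}$ and $\{M_n < -x\}$ to expectations under the $h$-transformed measures $\mathbf{P}_x^+$ and $\mathbf{P}_{-x}^-$, exploiting the martingale identities \eqref{Mes1}--\eqref{Mes2} together with the uniform asymptotics \eqref{Asym1}--\eqref{Asym0}. I will prove only \eqref{Cond+}; the statement \eqref{Cond-} is entirely symmetric, with $U$, $L_n$, $\mathbf{P}^+$ replaced by $V$, $M_n$, $\mathbf{P}^-$ and the sign of the starting point reversed. First I would split the time interval: fix $0<\delta<1$, write $k=\delta n$, and decompose $O_n$ using the Markov property at time $k$. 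For the piece of the trajectory on $[0,k]$ one uses that $O_n$ is $\mathcal{F}_n$-measurable and that, conditionally on $\{L_n\geq -x\}$, the initial segment $S_0,\dots,S_k$ behaves (after the change of measure) like the walk under $\mathbf{P}_x^+$; for the tail segment $S_k,\dots,S_n$ one needs that $S_k$ stays at height $O(1)$ and that the conditioning $\{L_n\geq -x\}$ exerts negligible influence on $O_n$, which is genuinely a function of the early coordinates up to an asymptotically vanishing error because $O_n\to O_\infty$.

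More concretely, the key step is the identity
\[
\mathbf{E}\left[O_n\,;\,L_n\geq -x\right]
= \mathbf{E}_x\left[\frac{O_n}{U(S_n)}\,U(S_n)\,;\,L_n\geq 0\right]
\]
after translating the walk to start at $x$, so that by definition of $\mathbf{P}_x^+$,
\[
\frac{\mathbf{E}\left[O_n\,;\,L_n\geq -x\right]}{\mathbf{P}(L_n\geq -x)}
= \frac{U(x)\,\mathbf{E}_x^+[O_n]}{\mathbf{P}(L_n\geq -x)}.
\]
Now \eqref{Asym0} gives $\mathbf{P}(L_n\geq -x)\sim U(x)\,C_3/\sqrt n$ uniformly for $x\in[0,N]$, and the same asymptotic without the shift gives $\mathbf{P}(L_n\geq 0)\sim C_3/\sqrt n$; these ratios therefore both tend to $1$ uniformly in $x\in[0,N]$. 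It remains to show $\mathbf{E}_x^+[O_n]\to \mathbf{E}_x^+[O_\infty]$ uniformly in $x\in[0,N]$. Since $O_n\to O_\infty$ $\mathbf{P}_x^+$-a.s.\ for each such $x$ and the $O_n$ are uniformly bounded, pointwise-in-$x$ convergence is immediate from dominated convergence; to upgrade to uniformity I would either invoke a Dini-type argument after verifying equicontinuity in $x$ of $x\mapsto \mathbf{E}_x^+[O_n]$ (the dependence on $x$ enters only through $U(x)$ in the normalisation and through the law of the initial point, both continuous), or, more robustly, decompose at time $\delta n$ so that the dependence of $\mathbf{E}_x^+[O_n]$ on $x$ is frozen into the finite-dimensional law of $(F_1,\dots,F_{\delta n})$ under $\mathbf{P}_x^+$ and use that on $[0,N]$ this family of laws varies continuously, hence the supremum over $x$ of the error tends to $0$.

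The main obstacle is exactly this last point: obtaining the convergence \emph{uniformly} in $x\in[0,N]$ rather than merely for each fixed $x$. The almost-sure hypothesis $O_n\to O_\infty$ is stated separately under each $\mathbf{P}_x^+$, and there is no a priori joint control; the fix is to use that $O_n$, being adapted, is well approximated by $O_{\delta n}=O_{\delta n}(F_1,\dots,F_{\delta n})$, reducing everything to the weak convergence of the $\delta n$-dimensional environment laws under $\mathbf{P}_x^+$, which is the content of (the proof technique of) Lemma~\ref{L_Newp41} and Lemma~7.3 of \cite{GV2017}, and to observe that this convergence, together with the continuity in $x$ of the limiting law $\mathbf{P}_x^+$ on environments, is uniform on the compact $[0,N]$. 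Combining the uniform ratio asymptotics from \eqref{Asym0} with this uniform convergence of $\mathbf{E}_x^+[O_n]$ yields \eqref{Cond+}; applying the dual construction with $V$ and \eqref{AsymConditional} in place of \eqref{Asym0} yields \eqref{Cond-}, completing the proof. $\hfill\square$
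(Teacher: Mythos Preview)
Your ``key step'' identity is incorrect, and this derails the argument. From the definition of $\mathbf{P}_x^+$ one has
\[
U(x)\,\mathbf{E}_x^+[O_n]=\mathbf{E}_x\bigl[O_n\,U(S_n);L_n\geq 0\bigr],
\]
whereas the translated expectation is simply $\mathbf{E}_x[O_n;L_n\geq 0]$; these differ by the factor $U(S_n)$ inside the expectation. (Writing $O_n=\frac{O_n}{U(S_n)}\cdot U(S_n)$ gives the correct identity $\mathbf{E}_x[O_n;L_n\geq 0]=U(x)\,\mathbf{E}_x^+[O_n/U(S_n)]$, not $U(x)\,\mathbf{E}_x^+[O_n]$.) Your subsequent claim that ``these ratios therefore both tend to $1$'' cannot hold: combining your displayed identity with $\mathbf{P}(L_n\geq -x)\sim U(x)C_3/\sqrt{n}$ and $\mathbf{E}_x^+[O_n]\to\mathbf{E}_x^+[O_\infty]$ would force $\mathbf{E}[O_n\mid L_n\geq -x]\sim (\sqrt{n}/C_3)\,\mathbf{E}_x^+[O_\infty]\to\infty$, contradicting $|O_n|\leq 1$.

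The missing mechanism is that the weight $U(S_n)$ does not sit inside $\mathbf{E}_x^+$ for free; it must be \emph{produced} by the tail of the walk. The paper obtains this by freezing $O_k$ at a fixed time $k$, applying the Markov property,
\[
\mathbf{E}[O_k;L_n\geq -x]=\mathbf{E}\bigl[O_k\,\mathbf{P}(L'_{n-k}\geq -x-S_k\mid S_k);L_k\geq -x\bigr],
\]
and then using \eqref{Asym0} so that $\mathbf{P}(L'_{n-k}\geq -x-S_k)/\mathbf{P}(L_n\geq -x)\to U(x+S_k)/U(x)$, which is exactly the Radon--Nikodym factor defining $\mathbf{E}_x^+[O_k]$. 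Passing from $O_k$ to $O_n$ then requires an additional device (the paper enlarges the horizon to $n\gamma$ with $\gamma>1$ so that the remaining $n(\gamma-1)$ steps furnish a $U(x+S_n)$ factor, allowing one to bound $\mathbf{E}[\,|O_n-O_k|;L_{n\gamma}\geq -x]$ by $C\,U(x)\,\mathbf{E}_x^+[\,|O_n-O_k|\,]\,\mathbf{P}(L_{n(\gamma-1)}\geq 0)$). Your idea for the uniformity step---an equicontinuity/Dini argument based on continuity of $x\mapsto U(x)$ and of $\mathbf{P}(L_n\geq -x)$---is sound and is in fact what the paper does at the end; but it only applies once pointwise convergence has been correctly established.
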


\begin{proof} We prove (\ref{Cond+}). Assume without loss of generality
that $\left\vert O_{n}\right\vert \leq 1$ for all $n$. First we show that,
as $n\rightarrow \infty $ 
\begin{equation*}
\mathbf{E}\left[ O_{k}|L_{n}\geq -x\right] \rightarrow \mathbf{E}_{x}^{+}%
\left[ O_{k}\right] 
\end{equation*}%
for any fixed $k$ and $x\in \lbrack 0,N].$ To this aim we write%
\begin{equation*}
\mathbf{E}\left[ O_{k}|L_{n}\geq -x\right] =\frac{\mathbf{E}\left[
O_{k};L_{n}\geq -x\right] }{\mathbf{P}(L_{n}\geq -x)}=\frac{\mathbf{E}\left[
O_{k}\mathbf{P}\left( L_{n-k}^{\prime }\geq -x-S_{k}|S_{k}\right) ;L_{k}\geq
-x\right] }{\mathbf{P}(L_{n}\geq -x)}.
\end{equation*}%
In view of \eqref{Asym0} 
\begin{equation*}
\frac{\mathbf{P}(L_{n}\geq -y)}{\mathbf{P}(L_{n}\geq -x)}\sim \frac{U(y)}{%
U(x)}
\end{equation*}%
as $n\rightarrow \infty $ uniformly in $x\in \lbrack 0,N]$ and $y=o\left( 
\sqrt{n}\right) $. Besides, using \eqref{Asym1} and since
\begin{eqnarray*}
\frac{1}{U(x)}\mathbf{E}\left[ O_{k}U\left( x+S_{k}\right) ;L_{k}\geq -x%
\right] &=&\frac{1}{U(x)}\mathbf{E}_{x}\left[ O_{k}U\left( S_{k}\right)
;L_{k}\geq 0\right] \\
&=&\mathbf{E}_{x}^{+}\left[ O_{k}\right] <\infty ,
\end{eqnarray*}%
the dominated convergence theorem gives%
\begin{eqnarray*}
\lim_{n\rightarrow \infty }\mathbf{E}\left[ O_{k}|L_{n}\geq -x\right] &=&%
\mathbf{E}\left[ O_{k}\lim_{n\rightarrow \infty }\frac{\mathbf{P}\left(
L_{n-k}^{\prime }\geq -x-S_{k}|S_{k}\right) }{\mathbf{P}(L_{n}\geq -x)}%
;L_{k}\geq -x\right] \\
&=&\frac{1}{U(x)}\mathbf{E}\left[ O_{k}U\left( x+S_{k}\right) ;L_{k}\geq -x%
\right] =\mathbf{E}_{x}^{+}\left[ O_{k}\right]
\end{eqnarray*}%
for each fixed $x\in \lbrack 0,N]$, where $\mathbf{S}'$ is distributed as $\mathbf{S}$, the two random walks are independent, and $L'$ is the running minimum of $\mathbf{S}'$.

Further, we fix $\gamma >1$ and consider the difference%
\begin{eqnarray*}
\left\vert \mathbf{E}\left[ O_{n};L_{n}\geq -x\right] -\mathbf{E}\left[
O_{n};L_{n\gamma }\geq -x\right] \right\vert &\leq &\mathbf{P}(L_{n}\geq -x)-%
\mathbf{P}\left( L_{n\gamma }\geq -x\right) \\
&=&U(x)(1+\varepsilon (n,x,\gamma ))\left( \mathbf{P}(L_{n}\geq 0)-\mathbf{P}%
\left( L_{n\gamma }\geq 0\right) \right)
\end{eqnarray*}%
where, for any $\varepsilon >0$ 
\begin{equation*}
\sup_{x\in \lbrack 0,N]}\left\vert \varepsilon (n,x,\gamma )\right\vert \leq
\varepsilon
\end{equation*}%
for all sufficiently large $n$. Hence using \eqref{Asym0} it follows that%
\begin{equation}
\frac{\left\vert \mathbf{E}\left[ O_{n};L_{n}\geq -x\right] -\mathbf{E}\left[
O_{n};L_{n\gamma }\geq -x\right] \right\vert }{\mathbf{P}(L_{n}\geq -x)}\leq
(1+\varepsilon _{1}(n,x,\gamma ))\frac{\left( \mathbf{P}(L_{n}\geq 0)-%
\mathbf{P}\left( L_{n\gamma }\geq 0\right) \right) }{\mathbf{P}(L_{n}\geq 0)}
\label{Gamma_diff}
\end{equation}%
where, for any $\varepsilon >0$ 
\begin{equation*}
\sup_{x\in \lbrack 0,N]}\left\vert \varepsilon _{1}(n,x,\gamma )\right\vert
\leq \varepsilon
\end{equation*}%
for all sufficiently large $n$.
Further,%
\begin{eqnarray*}
\left\vert \mathbf{E}\left[ O_{n};L_{n\gamma }\geq -x\right] -\mathbf{E}%
\left[ O_{k};L_{n\gamma }\geq -x\right] \right\vert &\leq &\mathbf{E}\left[
\left\vert O_{n}-O_{k}\right\vert ;L_{n\gamma }\geq -x\right] \\
&=&\mathbf{E}\left[ \left\vert O_{n}-O_{k}\right\vert \mathbf{P}%
_{S_{n}}( L'_{n(1-\gamma )}\geq -x) ;L_{n}\geq -x\right] \\
&\leq &C\mathbf{E}\left[ \left\vert O_{n}-O_{k}\right\vert
U(x+S_{n});L_{n}\geq -x\right] \mathbf{P}( L'_{n(\gamma -1)}\geq 0)
\\
&=&CU(x)\mathbf{E}_{x}^{+}\left[ \left\vert O_{n}-O_{k}\right\vert \right] 
\mathbf{P}\left( L_{n(\gamma -1)}\geq 0\right) .
\end{eqnarray*}

Thus, letting first $n$ to infinity and then $k$ to infinity we conclude
that 
\begin{eqnarray*}
\lim_{k\rightarrow \infty }\lim_{n\rightarrow \infty }\frac{\left\vert 
\mathbf{E}\left[ O_{n};L_{n\gamma }\geq -x\right] -\mathbf{E}\left[
O_{k};L_{n\gamma }\geq -x\right] \right\vert }{\mathbf{P}\left( L_{n}\geq
-x\right) } \leq C\lim_{k\rightarrow \infty }\mathbf{E}_{x}^{+}\left[ \left\vert
O_{\infty }-O_{k}\right\vert \right] \frac{\mathbf{1}}{\sqrt{\gamma -1}}=0.
\end{eqnarray*}%
Thus,%
\begin{equation*}
\mathbf{E}\left[ O_{n}|L_{n}\geq -x\right] \rightarrow \mathbf{E}_{x}^{+}%
\left[ O_{\infty }\right] 
\end{equation*}%
for each fixed $x\in \lbrack 0,N].$

Note further, that for $x>y$%
\begin{align*}
\left\vert \frac{\mathbf{E}\left[ O_{n};L_{n}\geq -x\right] }{\mathbf{P}%
(L_{n}\geq -x)}-\frac{\mathbf{E}\left[ O_{n};L_{n}\geq -y\right] }{\mathbf{P}%
(L_{n}\geq -y)}\right\vert  
&\leq \left\vert \frac{\mathbf{E}\left[ O_{n};L_{n}\geq -x\right] -\mathbf{E%
}\left[ O_{n};L_{n}\geq -y\right] }{\mathbf{P}(L_{n}\geq -x)}\right\vert  \\
& \quad +\left\vert\mathbf{E}\left[ O_{n};L_{n}\geq -y\right] \right\vert \left( \frac{1}{\mathbf{P}%
(L_{n}\geq -y)}-\frac{1}{\mathbf{P}(L_{n}\geq -x)}\right)  \\
&\leq 2\left( 1-\frac{\mathbf{P}(L_{n}\geq -y)}{\mathbf{P}(L_{n}\geq -x)}%
\right) .
\end{align*}%
In view of Hypothesis A3 $\mathbf{P}(L_{n}\geq -x)$ is continuous in $x$ for
each fixed $n$. Therefore, the conditional expectation $\mathbf{E}\left[
O_{n}|L_{n}\geq -x\right] $ is continuous in $x$ for each fixed $n$. Since%
\begin{equation*}
\frac{\mathbf{P}(L_{n}\geq -y)}{\mathbf{P}(L_{n}\geq -x)}\rightarrow \frac{%
U(y)}{U(x)}
\end{equation*}%
as $n\rightarrow \infty $ uniformly in $x$ and $y$ from any finite interval,
it follows that, for any $\varepsilon >0$ 
\begin{equation*}
\left\vert \frac{\mathbf{E}\left[ O_{n};L_{n}\geq -x\right] }{\mathbf{P}%
(L_{n}\geq -x)}-\frac{\mathbf{E}\left[ O_{n};L_{n}\geq -y\right] }{\mathbf{P}%
(L_{n}\geq -y)}\right\vert \leq 2\left( 1-\left( 1-\varepsilon \right) \frac{%
U(y)}{U(x)}\right) 
\end{equation*}%
for all sufficiently large $n$. \ Therefore, $\mathbf{E}_{x}^{+}\left[
O_{\infty }\right] $ is continuous in $x$. This implies, in turn, that
convergence 
\begin{equation*}
\mathbf{E}\left[ O_{n}|L_{n}\geq -x\right] \rightarrow \mathbf{E}_{x}^{+}%
\left[ O_{\infty }\right] 
\end{equation*}%
as $n\rightarrow \infty $ is uniform in $x\in \left[ 0,N\right] $, proving (%
\ref{Cond+}).

The validity of (\ref{Cond-}) can be checked in a similar way.
\end{proof}

Lemma \ref{L_Condx} allows us to establish further modifications of Lemmas \ref{L_Newp41} and \ref{L_Newp42}.

\begin{lemma}
\label{L_pr4New} Take $0<\delta <1$. Let $G_{n}:=g_{n}(F_{1},\ldots
,F_{\delta n})$ , $n \in \N$, be random variables with values in an Euclidean
(or polish) space $\mathcal{G}$ such that, as $n \to \infty$
\begin{equation*}
G_{n}\ \rightarrow \ G_{\infty }\quad \mathbf{P}^{+}\text{-a.s.}
\end{equation*}%
for some $\mathcal{G}$-valued random variable $G_{\infty }$. Also let $%
H_{n}:=h_{n}(F_{1},\ldots ,F_{\delta n})$, $ n \in \N $, be random variables
with values in an Euclidean (or polish) space $\mathcal{H}$ such that, as $n \to \infty$
\begin{equation*}
H_{n}\ \rightarrow \ H_{\infty }\quad \mathbf{P}_{x}^{-}\text{-a.s. }
\end{equation*}%
for all $x\leq 0$ and some $\mathcal{H}$-valued random variable $H_{\infty }$. Denote 
\begin{equation*}
\tilde{H}_{n}:=h_{n}(F_{n},\ldots ,F_{n-\delta n+1}).
\end{equation*}%
Let $T_{n}:=t_{n}(F_{1},\ldots ,F_{n})$ be random variables with values in
an Euclidean (or polish) space $\mathcal{T}$ such that, as $n\rightarrow
\infty $ 
\begin{equation*}
T_{n}\ \rightarrow \ T_{\infty }\quad \mathbf{P}_{x}^{+}\text{-a.s. }
\end{equation*}%
for all $x\geq 0$ and some $\mathcal{T}$-valued random variable $T_{\infty }$. 
Denote for $r \leq n$ $\hat{T}_{n-r}:=t_{n-r}(F_{n-r+1},\ldots ,F_{n})$. Then, for $%
\lambda >0$ and for any bounded continuous function $\varphi :\mathcal{G}%
\times \mathcal{H}\times \mathbb{R\times }\mathcal{T}\rightarrow \mathbb{R}$%
, as $\min \left( r,n-r\right) \rightarrow \infty ,$ 
\begin{eqnarray*}
&&\frac{\mathbf{E}[\varphi (G_{r},\tilde{H}_{r},S_{r};\hat{T}%
_{n-r})e^{-\lambda S_{r}}\;;\;L_{n}\geq 0]}{\mathbf{E}[e^{-\lambda
S_{r}};L_{n}\geq 0]\ } \\
\; &\rightarrow &\ \iiiint U(-z)\varphi (u,v,-z,t)\mathbf{P}^{+}\left(
G_{\infty }\in du\right) \mathbf{P}_{z}^{-}\left( H_{\infty }\in dv\right) 
\mathbf{P}_{-z}^{+}\left( T_{\infty }\in dt\right) \nu _{\lambda }(dz)\ .
\end{eqnarray*}
\end{lemma}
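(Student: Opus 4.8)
The plan is to reduce the claimed three‑block factorisation to the two‑block statement of Lemma \ref{L_Newp41} by conditioning on the $\sigma$‑field $\mathcal{F}_{r}$ and peeling off the tail block of length $n-r$. First I would write
\[
\mathbf{E}[\varphi(G_{r},\tilde H_{r},S_{r};\hat T_{n-r})e^{-\lambda S_{r}};L_{n}\ge 0]
=\mathbf{E}\bigl[\Phi_{r,n}(G_{r},\tilde H_{r},S_{r})\,e^{-\lambda S_{r}};L_{r}\ge 0\bigr],
\]
where, using the Markov property and the fact that $\hat T_{n-r}=t_{n-r}(F_{r+1},\dots,F_{n})$ depends only on the post‑$r$ environment,
\[
\Phi_{r,n}(u,v,y):=\mathbf{E}_{y}\bigl[\varphi(u,v,y;\hat T_{n-r})\,;\,L_{n-r}\ge 0\bigr]\big/\mathbf{P}(L_{n-r}\ge 0)\cdot\mathbf{P}(L_{n-r}\ge 0)
\]
— more precisely I would factor $\mathbf{P}(L_{n}\ge 0\mid\mathcal{F}_{r})=\mathbf{P}_{S_{r}}(L'_{n-r}\ge 0)$ off and recognise that for $y=S_{r}\ge 0$ fixed,
\[
\frac{\mathbf{E}_{y}[\varphi(u,v,y;\hat T_{n-r});L_{n-r}\ge 0]}{\mathbf{P}(L_{n-r}\ge 0)}\longrightarrow U(y)\,\mathbf{E}_{y}^{+}[\varphi(u,v,y;T_{\infty})]
\]
as $n-r\to\infty$, by Lemma \ref{L_Condx} applied with $O_{n-r}=\varphi(u,v,y;\hat T_{n-r})$ (here $\hat T_{n-r}\to T_{\infty}$ $\mathbf{P}_{y}^{+}$‑a.s., and $\varphi$ is bounded continuous, so $O_{n-r}$ converges $\mathbf{P}_y^{+}$‑a.s. and is uniformly bounded). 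The point of invoking Lemma \ref{L_Condx} rather than a bare change of measure is that it delivers this convergence \emph{uniformly} in $y\in[0,N]$, which is what allows the substitution inside the outer expectation.

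Next I would insert a truncation: fix $N$, and split according to whether $S_{r}\le N$ or $S_{r}>N$. On $\{S_{r}>N\}$ the factor $e^{-\lambda S_{r}}$ together with the bound $|\varphi|\le\|\varphi\|_{\infty}$ and the crude estimate $\mathbf{P}(L_{n}\ge 0\mid\mathcal{F}_{r})\le C_{1}U(S_{r})\mathbf{P}(L_{n-r}\ge 0)$ from \eqref{Asym1} show that the contribution to the numerator is $O\!\left(\mathbf{E}[U(S_{r})e^{-\lambda S_{r}};S_{r}>N,L_{r}\ge 0]\right)\mathbf{P}(L_{n-r}\ge 0)$, and since $\int_{0}^{\infty}e^{-\lambda z}U(z)\,dz<\infty$ (this is \eqref{def_ci}) and, by Lemma \ref{L_Newp41} with $\psi\equiv 1$, $\varphi\equiv 1$, the law of $S_{r}$ under $\mathbf{E}[\,\cdot\,e^{-\lambda S_{r}};L_{r}\ge 0]/\mathbf{E}[e^{-\lambda S_{r}};L_{r}\ge 0]$ converges to $\nu_{\lambda}$ after reflection, this tail is $\le\varepsilon(N)$ uniformly in $n,r$ with $\varepsilon(N)\to 0$. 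The same tail bound is needed for the limiting integral, which is finite because $U(-z)e^{-\lambda\cdot}$ is integrable against the relevant measures. So it suffices to prove convergence with $\varphi$ replaced by $\varphi\cdot\mathbf 1_{\{S_{r}\le N\}}$ — but then on the truncated region the uniform convergence of $\Phi_{r,n}$ turns the problem into
\[
\frac{\mathbf{E}\bigl[\tilde\varphi(G_{r},\tilde H_{r},S_{r})\,e^{-\lambda S_{r}};L_{r}\ge 0\bigr]}{\mathbf{E}[e^{-\lambda S_{r}};L_{r}\ge 0]},\qquad \tilde\varphi(u,v,y):=\varphi\text{-averaged part}=U(y)\mathbf{E}_{y}^{+}[\varphi(u,v,-(-y);T_{\infty})]\mathbf 1_{\{y\le N\}},
\]
wait — more cleanly: the ratio of numerator to $\mathbf{E}[e^{-\lambda S_{r}};L_{n}\ge 0]$ equals the ratio of the truncated‑and‑substituted numerator to $\mathbf{E}[e^{-\lambda S_{r}};L_{r}\ge 0]$ times $\mathbf{P}(L_{n-r}\ge 0)/\bigl(\mathbf{E}[e^{-\lambda S_{r}};L_{n}\ge 0]/\mathbf{E}[e^{-\lambda S_{r}};L_{r}\ge 0]\bigr)$, and this last correction factor tends to $\int_{-\infty}^{0}e^{\lambda z}V(z)\,dz\cdot(\text{const})$-type constant that is precisely reabsorbed by $\nu_{\lambda}$; I would track it through \eqref{Asym0}. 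The upshot is that Lemma \ref{L_Newp41} applies directly to the bounded continuous function $(u,v,y)\mapsto\mathbf 1_{\{y\le N\}}U(y)\mathbf{E}_{y}^{+}[\varphi(u,v,y;T_{\infty})]$ of $(G_{r},\tilde H_{r},S_{r})$ — note this function inherits continuity from the continuity of $y\mapsto\mathbf{E}_{y}^{+}[\,\cdot\,]$ established inside the proof of Lemma \ref{L_Condx} — yielding
\[
\iiint U(-z)\,\mathbf{E}_{-z}^{+}[\varphi(u,v,-z;T_{\infty})]\,\mathbf{P}^{+}(G_{\infty}\in du)\,\mathbf{P}_{z}^{-}(H_{\infty}\in dv)\,\nu_{\lambda}(dz),
\]
and unfolding $\mathbf{E}_{-z}^{+}[\varphi(u,v,-z;T_{\infty})]=\int\varphi(u,v,-z,t)\,\mathbf{P}_{-z}^{+}(T_{\infty}\in dt)$ is exactly the asserted quadruple integral. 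Finally, letting $N\to\infty$ removes the truncation in both the prelimit and the limit.

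The main obstacle is the interchange of the $n-r\to\infty$ limit (inside, producing the $\mathbf{P}_{-z}^{+}$ factor) with the $r\to\infty$ limit (outside, producing $\mathbf{P}^{+}\otimes\mathbf{P}_{z}^{-}\otimes\nu_{\lambda}$): one cannot simply send $n-r$ and $r$ to infinity independently without controlling the convergence of $\Phi_{r,n}(u,v,y)$ \emph{uniformly in the starting point} $y=S_{r}$, and this is exactly why Lemma \ref{L_Condx} (with its uniformity in $x\in[0,N]$, which in turn rested on Hypothesis A3 for the continuity of $x\mapsto\mathbf{P}(L_{n}\ge -x)$) is indispensable. A secondary technical point is making the tail truncation in $S_{r}$ uniform in the pair $(r,n-r)$ simultaneously for the prelimit ratios and for the candidate limit; this is handled by the exponential weight $e^{-\lambda S_{r}}$ against the sub‑polynomial growth of $U$, exactly as in the proof of Lemma \ref{L_Newp41}.
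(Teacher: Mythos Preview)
Your proposal is correct and follows essentially the same route as the paper: condition on $\mathcal{F}_{r}$, use Lemma~\ref{L_Condx} (with its uniformity in $x\in[0,N]$) to replace the tail block by $U(S_{r})\mathbf{E}_{S_{r}}^{+}[\cdots]\,\mathbf{P}(L_{n-r}\ge 0)$, truncate in $S_{r}$, and then invoke Lemma~\ref{L_Newp41}. Two small technical deviations are worth noting: the paper uses the continuous cutoff $\Psi_{N}$ rather than the indicator $\mathbf 1_{\{S_{r}\le N\}}$ (your choice is not continuous, so Lemma~\ref{L_Newp41} does not apply verbatim), and the paper first treats product functions $\varphi_{1}(u,v,z)\varphi_{2}(t)$ and then passes to general $\varphi$ by Weierstrass approximation, which sidesteps having to verify joint continuity of $(u,v,y)\mapsto\mathbf{E}_{y}^{+}[\varphi(u,v,y;T_{\infty})]$; your direct treatment of general $\varphi$ would need this joint continuity, which is plausible but not immediate from Lemma~\ref{L_Condx} alone.
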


\begin{proof}
 Let $N$ be fixed and 
\begin{equation} \label{defPsiN}
\Psi _{N}(x)=\left\{ 
\begin{array}{ccc}
0 & \text{if} & x<-1, \\ 
1+x & \text{if} & x\in (-1,0), \\ 
1 & \text{if} & x\in \lbrack 0,N], \\ 
N+1-x & \text{if} & x\in (N,N+1), \\ 
0 & \text{if} & x>N+1.%
\end{array}%
\right. 
\end{equation}
 
Applying Lemma \ref{L_Condx} and (\ref{Asym0}) we obtain
that, for any bounded continuous functions $\varphi _{1}:\mathcal{G}\times 
\mathcal{H}\times \mathbb{R}\rightarrow \mathbb{R}$ and $\varphi _{2}:%
\mathcal{T}\rightarrow \mathbb{R}$ as $n-r\rightarrow \infty $%
\begin{align*}
\mathbf{E}&\left[ e^{-\lambda S_{r}}\varphi _{1}(G_{r},\tilde{H}%
_{r},S_{r})\varphi _{2}(\hat{T}_{n-r})\Psi _{N}(S_{r});L_{n}\geq 0\right] \ 
\\
&=\mathbf{E}[e^{-\lambda S_{r}}\varphi _{1}(G_{r},\tilde{H}_{r},S_{r})\Psi
_{N}(S_{r})\mathbf{E}\left[ \varphi _{2}(T_{n-r}^{\prime });
L_{n-r}^{\prime }\geq -S_{r} |S_{r}\right] ;L_{r}\geq 0] \\
&\sim \mathbf{E}[e^{-\lambda S_{r}}\varphi _{1}(G_{r},\tilde{H}%
_{r},S_{r})U(S_{r})\Psi _{N}(S_{r}); L_{r}\geq 0;  \mathbf{E}%
_{S_{r}}^{+}[\varphi _{2}(T_{\infty })]]\mathbf{P}\left( L_{n-r}\geq
0\right),
\end{align*}%
where we used that $\Psi _{N}(S_{r})\mathbf{1}_{L_{r}\geq 0} =0$ if $S_r \notin [0,N+1]$.
Now recalling (\ref{Cond1}) we obtain as $r\rightarrow \infty $%
\begin{eqnarray*}
&&\mathbf{E}[e^{-\lambda S_{r}}\varphi _{1}(G_{r},\tilde{H}%
_{r},S_{r})U(S_{r})\Psi _{N}(S_{r});  L_{r}\geq 0;  \mathbf{E}%
_{S_{r}}^{+}[\varphi _{2}(T_{\infty })]] \\
&\sim &\mathbf{E}[e^{-\lambda S_{r}}; L_{r}\geq 0 ]\times  \\
&&\times \iiint U(-z)\Psi _{N}(-z)\mathbf{E}_{-z}^{+}[\varphi _{2}(T_{\infty
})]\varphi _{1}(u,v,-z)\mathbf{P}^{+}\left( G_{\infty }\in du\right) \mathbf{%
P}_{z}^{-}\left( H_{\infty }\in dv\right) \nu _{\lambda }(dz) \\
&=&\mathbf{E}[e^{-\lambda S_{r}}; L_{r}\geq 0 ]\times  \\
&&\times \iiiint U(-z)\Psi _{N}(-z)\varphi _{2}(t)\varphi _{1}(u,v,-z)%
\mathbf{P}^{+}\left( G_{\infty }\in du\right) \mathbf{P}_{z}^{-}\left(
H_{\infty }\in dv\right) \mathbf{P}_{-z}^{+}\left( T_{\infty }\in dt\right)
\nu _{\lambda }(dz)\ .
\end{eqnarray*}%
Using the same arguments as earlier we conclude that%
\begin{align*}
\lim_{\min \left( r,n-r\right) \rightarrow \infty }&\frac{\mathbf{E}%
[\varphi _{1}(G_{r},\tilde{H}_{r},S_{r})\varphi _{2}(\hat{T}%
_{n-r})e^{-\lambda S_{r}}\;;\;L_{n}\geq 0]}{\mathbf{E}[e^{-\lambda
S_{r}};L_{n}\geq 0]\ } \\
&=\iiiint U(-z)\varphi _{2}(t)\varphi _{1}(u,v,-z)\mathbf{P}^{+}\left(
G_{\infty }\in du\right) \mathbf{P}_{z}^{-}\left( H_{\infty }\in dv\right) 
\mathbf{P}_{-z}^{+}\left( T_{\infty }\in dt\right) \nu _{\lambda }(dz).
\end{align*}

This proves our statement for the case $\varphi =\varphi _{2}\times \varphi
_{1}$. The general case follows from the Weierstrass theorem on
approximation of multivariate bounded continuous functions by polynomials
and a standard truncation procedure.
\end{proof}

\begin{lemma}
\label{L_pr3New}Let $G_{r},H_{r},\tilde{H}_{r}, T_r, \hat{T}_r$, $r \in \N $, be as in Lemma \ref{L_pr4New}, now fulfilling, as $r \to \infty$
\begin{equation*}
G_{r}\ \rightarrow \ G_{\infty }\quad \mathbf{P}^{+}_x\text{-a.s.}\quad \forall x\geq 0 ,\quad
H_{r}\ \rightarrow \ H_{\infty }\quad \mathbf{P}^{-}\text{-a.s. and }%
T_{r}\rightarrow T_{\infty }\text{ }\mathbf{P}^{+}\text{-a.s.}
\end{equation*}%
for all $x\geq 0$. Then, for $\lambda >0$ and for any bounded continuous
function $\varphi :\mathcal{G}\times \mathcal{H}\times \mathbb{R\times }%
\mathcal{T}\rightarrow \mathbb{R}$, as $\min \left( r,n-r\right) \rightarrow
\infty $ 
\begin{eqnarray*}
&&\frac{\mathbf{E}[\varphi (G_{r},\tilde{H}_{r},S_{r};\hat{T}_{n-r})e^{\lambda
S_{r}}\;;\;\tau (n)=r]}{\mathbf{E}[e^{\lambda S_{r}};\tau (n)=r]} \\
&\rightarrow &\ \iiiint \varphi (u,v,-z,t)\,\mathbf{P}^{+}_z\left(
G_{\infty }\in du\right) \mathbf{P}^{-}\left( H_{\infty }\in dv\right) 
\mathbf{P}^{+}\left( T_{\infty }\in dt\right) \mu _{\lambda }(dz)\ .
\end{eqnarray*}
\end{lemma}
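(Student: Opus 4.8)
The plan is to mirror the proof of Lemma \ref{L_pr4New}, but starting from the "minimum-is-attained-at-time-$r$" conditioning rather than the "global minimum nonnegative" conditioning, so that the roles of $\mathbf{P}^+$ and $\mathbf{P}^-$ are swapped for the pre-$r$ part and Lemma \ref{L_Newp42} (Equation \eqref{Cond3}) plays the role that Lemma \ref{L_Newp41} (Equation \eqref{Cond1}) played before. First I would decompose the event $\{\tau(n)=r\}$ as $\{M_r<0\}\cap\{L'_{n-r}\geq 0\}$ up to the usual duality for the first $r$ steps, where $\mathbf{S}'$ is an independent copy of $\mathbf{S}$ with its own running minimum $L'$; the point is that $\tau(n)=r$ means the walk strictly decreases to its minimum on $[0,r]$ and then stays above level $S_r$ on $[r,n]$. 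Conditionally on $S_r$ and on $\mathcal{F}_r$, the tail factor contributes $\mathbf{E}[\varphi_2(T'_{n-r});L'_{n-r}\geq -(\,-S_r)\,]$... — more precisely, since $S_r<0$ on $\{\tau(n)=r\}$, the relevant event for the shifted walk is $\{M'_{n-r}<-S_r\}$ with $-S_r>0$ playing the role of the barrier, so the natural conditioning to invoke is the second half of Lemma \ref{L_Condx}, Equation \eqref{Cond-}, giving $\mathbf{E}[\varphi_2(\hat T_{n-r})\mid M_{n-r}<-S_r]\to \mathbf{E}^-_{-S_r}[\varphi_2(T_\infty)]$ uniformly on $S_r\in[-(N+1),0]$.

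Here I must be careful: the statement of the lemma asserts the limit object involves $\mathbf{P}^+$ for $T_\infty$, not $\mathbf{P}^-$. This is consistent because after the global minimum at time $r$ the walk $\mathbf{S}-S_r$ behaves like a walk conditioned to stay \emph{nonnegative}, i.e. under a $\mathbf{P}^+$-type change of measure, and indeed the hypothesis of Lemma \ref{L_pr3New} is that $T_r\to T_\infty$ $\mathbf{P}^+$-a.s. So the input I actually need for the $n-r\to\infty$ asymptotics is a version of Lemma \ref{L_Condx} that converts conditioning on $\{L_{n-r}\geq -S_r\}$ (equivalently, on the walk after the minimum staying above its starting level) into $\mathbf{E}^+_{|S_r|}$; but $T$ is read off the environment \emph{forward} from time $r$, i.e. $\hat T_{n-r}=t_{n-r}(F_{r+1},\dots,F_n)$, and the relevant conditioning event is that the walk started at $0$ (after re-centering at $S_r$) has $L'_{n-r}\geq 0$ — wait, it is $\geq -x$ with $x=-S_r\geq 0$. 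So it is exactly Equation \eqref{Cond+} of Lemma \ref{L_Condx} with $x=|S_r|\in[0,N+1]$ that I want: $\mathbf{E}[\hat T_{n-r}\mid L_{n-r}\geq -|S_r|]\to \mathbf{E}^+_{|S_r|}[T_\infty]$ uniformly. This is the correct mechanism and it produces the $\mathbf{P}^+_z$ factor in the display (with $z=-S_r\geq 0$, matching $\mu_\lambda(dz)$ supported on $[0,\infty)$).

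With that identification, I would introduce the same truncation $\Psi_N$ as in \eqref{defPsiN}, write
\begin{align*}
\mathbf{E}&[e^{\lambda S_r}\varphi_1(G_r,\tilde H_r,S_r)\varphi_2(\hat T_{n-r})\Psi_N(-S_r);\tau(n)=r]\\
&=\mathbf{E}[e^{\lambda S_r}\varphi_1(G_r,\tilde H_r,S_r)\Psi_N(-S_r)\,\mathbf{E}[\varphi_2(T'_{n-r});L'_{n-r}\geq -|S_r|\mid S_r];M_r<0],
\end{align*}
then replace the inner conditional expectation by $U(|S_r|)\,\mathbf{E}^+_{|S_r|}[\varphi_2(T_\infty)]\,\mathbf{P}(L_{n-r}\geq 0)(1+o(1))$ via \eqref{Asym0} and \eqref{Cond+} (uniformly, since $\Psi_N(-S_r)\mathbf{1}_{M_r<0}=0$ off $S_r\in[-(N+1),0]$), and finally apply Lemma \ref{L_Newp42}, i.e.\ \eqref{Cond3}, to the resulting quantity $\mathbf{E}[e^{\lambda S_r}\varphi_1(G_r,\tilde H_r,S_r)U(-S_r)\Psi_N(-S_r)\mathbf{E}^+_{-S_r}[\varphi_2(T_\infty)];\tau(r)=r]$ divided by $\mathbf{E}[e^{\lambda S_r};\tau(r)=r]$. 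The factor $U(-S_r)$ gets absorbed into $\mu_\lambda(dz)=c_1e^{-\lambda z}U(z)\mathbf{1}_{z\ge0}dz$ exactly as $V(z)$ was absorbed into $\nu_\lambda$ in Lemma \ref{L_pr4New}; note that here no extra weight is needed since $U(-z)$ is already present, so the computation is in fact slightly cleaner than the $\mathbf{P}^+$-case. Then let $N\to\infty$ to remove the truncation (the exponential boundedness of $\psi$ — here absorbed, since $\varphi$ is simply bounded — makes the tail $S_r\notin[-(N+1),0]$ negligible after the $e^{\lambda S_r}$ weight, because under $\mu_\lambda$ the mass near $z=\infty$ decays like $e^{-\lambda z}U(z)$), and finally pass from product functions $\varphi=\varphi_2\times\varphi_1$ to general $\varphi$ by Weierstrass approximation and truncation, exactly as at the end of Lemma \ref{L_pr4New}.

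The main obstacle I anticipate is handling the two conditionings simultaneously and uniformly: on $\{\tau(n)=r\}$ the barrier level $|S_r|$ for the tail walk is itself random and depends on $\mathcal{F}_r$, so to justify replacing $\mathbf{E}[\varphi_2(T'_{n-r});L'_{n-r}\geq -|S_r|\mid S_r]$ by its limit I need the convergence in \eqref{Cond+} to be uniform over $|S_r|$ in a fixed compact set — which is precisely why the truncation $\Psi_N$ is inserted, and why Lemma \ref{L_Condx} was proved with uniformity in $x\in[0,N]$. A secondary but real point is checking the direction of all the changes of measure — it is easy to get $\mathbf{P}^+\leftrightarrow\mathbf{P}^-$ backwards — but the self-consistency check is the support of $\mu_\lambda$ (on $[0,\infty)$, so $z=-S_r\ge0$, forcing $\mathbf{P}^+_z$ for $T_\infty$ and $\mathbf{P}^-$ for the downward-looking $H_\infty$, with $\mathbf{P}^+_z$ for $G_\infty$ coming from the pre-minimum trajectory), and this matches the stated conclusion. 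Everything else is a routine transcription of the Lemma \ref{L_pr4New} argument with $L_n\ge0 \leadsto \tau(n)=n$, $e^{-\lambda S_r}\leadsto e^{\lambda S_r}$, $\nu_\lambda\leadsto\mu_\lambda$, $V\leadsto U$.
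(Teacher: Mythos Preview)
Your decomposition of $\{\tau(n)=r\}$ goes astray at the post-$r$ barrier. You begin correctly with $\{\tau(n)=r\}=\{\tau(r)=r\}\cap\{L'_{n-r}\geq 0\}$ for the re-centered walk $S'_k=S_{r+k}-S_r$, but then talk yourself into $L'_{n-r}\geq -|S_r|$ and invoke Lemma~\ref{L_Condx} with $x=|S_r|$, producing $\mathbf{E}^+_{|S_r|}[\varphi_2(T_\infty)]$ together with an extra factor $U(|S_r|)$. That is the wrong event: on $\{\tau(n)=r\}$ the post-$r$ condition is exactly $S_{r+k}\geq S_r$, i.e.\ $L'_{n-r}\geq 0$, with the barrier at $0$ \emph{independently of} $S_r$. (It is in Lemma~\ref{L_pr4New}, under the global conditioning $\{L_n\geq 0\}$, that the post-$r$ barrier becomes $-S_r$; here the situation is structurally simpler, not harder.) Because the two pieces are independent and the post-$r$ event does not involve $S_r$, the expectation factorizes outright:
\[
\mathbf{E}\bigl[\varphi_1(G_r,\tilde H_r,S_r)\varphi_2(\hat T_{n-r})e^{\lambda S_r};\tau(n)=r\bigr]
=\mathbf{E}\bigl[\varphi_1(G_r,\tilde H_r,S_r)e^{\lambda S_r};\tau(r)=r\bigr]\cdot\mathbf{E}\bigl[\varphi_2(T_{n-r});L_{n-r}\geq 0\bigr],
\]
and the same splitting holds for the denominator. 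No truncation $\Psi_N$ and no uniformity in $x$ are needed: apply Lemma~\ref{L_Newp42} with $\psi\equiv 1$ to the first ratio and Lemma~\ref{L_Condx} with $x=0$ to the second. This yields the $T$-factor as $\mathbf{E}^+[\varphi_2(T_\infty)]$, i.e.\ $\mathbf{P}^+(T_\infty\in dt)$ with no subscript, matching the statement. Note that in the target display the factor $\mathbf{P}^+_z$ attaches to $G_\infty$ (it comes out of Lemma~\ref{L_Newp42} for the pre-minimum part), not to $T_\infty$; you have swapped these. Your route would instead leave a spurious $U(z)\,\mathbf{E}^+_z[\varphi_2(T_\infty)]$ inside the $\mu_\lambda$-integral, which does not reduce to the claimed limit.
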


\begin{proof} We again consider bounded continuous functions $\varphi _{1}:%
\mathcal{G}\times \mathcal{H}\times \mathbb{R}\rightarrow \mathbb{R}$ and $%
\varphi _{2}:\mathcal{T}\rightarrow \mathbb{R}$. Then%
\begin{multline*}
\mathbf{E}\left[ \varphi _{1}(G_{r},\tilde{H}_{r},S_{r})\varphi
_{2}(\hat{T}_{n-r})e^{\lambda S_{r}}\;;\;\tau (n)=r\right]  \\
=\mathbf{E}\left[ \varphi _{1}(G_{r},\tilde{H}_{r},S_{r})e^{\lambda
S_{r}}\;;\;\tau (r)=r\right] \mathbf{E}\left[ \varphi
_{2}(T_{n-r});\;L_{n-r}\geq 0\right] .
\end{multline*}%
This representation, Lemma \ref{L_Newp42} with $\psi (t)\equiv 1$ and Lemma %
\ref{L_Condx} show that 
\begin{multline*}
\frac{\mathbf{E}\left[ \varphi _{1}(G_{r},\tilde{H}_{r},S_{r})\varphi
_{2}(T_{n-r})e^{\lambda S_{r}}\;;\;\tau (n)=r\right] }{\mathbf{E}[e^{\lambda
S_{r}};\tau (n)=r]} \\
\rightarrow \iiint \varphi _{1}(u,v,-z)\mathbf{P}_z^{+}\left( G_{\infty }\in
du\right) \mathbf{P}^{-}\left( H_{\infty }\in dv\right) \mu _{\lambda
}(dz)\mathbf{E}^{+}\left[ T_{\infty }\right]  \qquad \qquad \qquad \\
= \iiiint \varphi _{1}(u,v,-z)\varphi _{2}(t)\,\mathbf{P}_z^{+}\left(
G_{\infty }\in du\right) \mathbf{P}^{-}\left( H_{\infty }\in dv\right) 
\mathbf{P}^{+}\left( T_{\infty }\in dt\right) \mu _{\lambda }(dz).
\end{multline*}%
The general case follows by the same arguments as in the previous lemma.
\end{proof}

\begin{lemma}
\bigskip \label{L_pr6New}Let $G_{r},H_{r},\tilde{H}_{r}, T_r, \hat{T}_r$, $r \in \N$, be as in Lemma \ref{L_pr4New}, and $%
H_{r,N}=h_{r,N}(F_{N},F_{N+1},...,F_{r\delta -1})$, $\tilde{H}%
_{r,N}=h_{r,N}(F_{r-N},F_{r-N-1},...,F_{r-\delta n+1})$ now fulfilling as $%
n\rightarrow \infty $ 
\begin{equation*}
G_{r}\ \rightarrow \ G_{\infty }\quad \mathbf{P}_x^{+}\text{-a.s.},\quad \forall x \geq 0, \quad
H_{r}\ \rightarrow \ H_{\infty },\;H_{r,N}\rightarrow H_{\infty .N}\text{ }%
\mathbf{P}^{-}\text{-a.s.}
\end{equation*}%
and $T_{r}\rightarrow T_{\infty }$ $\mathbf{P}^{+}$-a.s. as $r\rightarrow
\infty $. Then, for $\lambda >0$ and for any bounded continuous function $%
\varphi :\mathcal{G}\times \mathcal{H}\times \mathcal{H}\times \mathbb{%
R\times }\mathcal{T}\rightarrow \mathbb{R}$, as $\min \left( r,n-r\right)
\rightarrow \infty $ 
\begin{eqnarray*}
&&\frac{\mathbf{E}[\varphi (G_{r},\tilde{H}_{r},\tilde{H}%
_{r,N},S_{r};\hat{T}_{n-r})e^{\lambda S_{r}}\;;\;\tau (n)=r]}{\mathbf{E}%
[e^{\lambda S_{r}};\tau (n)=r]} \\
&\rightarrow &\ \iiiint \varphi (u,v_1,v_2,-z,t)\,\mathbf{P}_z^{+}\left(
G_{\infty }\in du\right) \mathbf{P}^{-}\left( H_{\infty }\in
dv_{1},H_{\infty ,N}\in dv_{2}\right) \mathbf{P}^{+}\left( T_{\infty }\in
dt\right) \mu_{\lambda }(dz)\ .
\end{eqnarray*}
\end{lemma}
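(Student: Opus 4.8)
The plan is to reduce Lemma \ref{L_pr6New} to Lemma \ref{L_pr3New} by regarding the pair $(\tilde{H}_{r},\tilde{H}_{r,N})$ as a single $\mathcal{H}\times\mathcal{H}$-valued functional of the time-reversed environment read in a window of length $\delta r$ near time $r$. Exactly as in the proof of Lemma \ref{L_pr3New}, it suffices to treat a product test function $\varphi(u,v_{1},v_{2},-z,t)=\varphi_{1}(u,v_{1},v_{2},-z)\varphi_{2}(t)$ with $\varphi_{1},\varphi_{2}$ bounded continuous, the general case following from the Weierstrass approximation theorem together with the same truncation argument used there.

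First I would split the expectation at time $r$. Since $\{\tau(n)=r\}=\{\tau(r)=r\}\cap\{L'_{n-r}\geq 0\}$, where $L'$ denotes the running minimum of the walk shifted to start at time $r$, and since $(G_{r},\tilde{H}_{r},\tilde{H}_{r,N},S_{r})$ is $\mathcal{F}_{r}$-measurable while $\hat{T}_{n-r}=t_{n-r}(F_{r+1},\dots,F_{n})$ depends only on the independent environments after $r$, independence of increments yields
\begin{equation*}
\mathbf{E}[\varphi_{1}(G_{r},\tilde{H}_{r},\tilde{H}_{r,N},S_{r})\varphi_{2}(\hat{T}_{n-r})e^{\lambda S_{r}};\tau(n)=r]=\mathbf{E}[\varphi_{1}(G_{r},\tilde{H}_{r},\tilde{H}_{r,N},S_{r})e^{\lambda S_{r}};\tau(r)=r]\,\mathbf{E}[\varphi_{2}(T_{n-r});L_{n-r}\geq 0],
\end{equation*}
and likewise $\mathbf{E}[e^{\lambda S_{r}};\tau(n)=r]=\mathbf{E}[e^{\lambda S_{r}};\tau(r)=r]\,\mathbf{P}(L_{n-r}\geq 0)$. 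Hence the ratio in the statement factorizes into a ratio depending only on $r$ and one depending only on $n-r$, which may be passed to the limit separately as $\min(r,n-r)\to\infty$.

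For the $r$-part I would invoke the version of Lemma \ref{L_Newp42} with $\psi\equiv 1$ in which the single functional $\tilde{H}_{r}$ is replaced by the pair $(\tilde{H}_{r},\tilde{H}_{r,N})$. This extension is immediate: Lemma 7.3 in \cite{GV2017}, Chapter 7, on which Lemma \ref{L_Newp42} rests, applies verbatim to vector-valued functionals of the environment in a window of length $\delta r$ near an endpoint, and the hypotheses of Lemma \ref{L_pr6New} supply precisely the joint convergence $(H_{r},H_{r,N})\to(H_{\infty},H_{\infty,N})$ $\mathbf{P}^{-}$-a.s. (after harmlessly enlarging the index window $\{F_{N},\dots,F_{\delta r-1}\}$ of $H_{r,N}$ to $\{F_{1},\dots,F_{\delta r}\}$, which changes nothing asymptotically since $N$ is fixed). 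This gives, as $r\to\infty$,
\begin{equation*}
\frac{\mathbf{E}[\varphi_{1}(G_{r},\tilde{H}_{r},\tilde{H}_{r,N},S_{r})e^{\lambda S_{r}};\tau(r)=r]}{\mathbf{E}[e^{\lambda S_{r}};\tau(r)=r]}\longrightarrow\iiint\varphi_{1}(u,v_{1},v_{2},-z)\,\mathbf{P}^{+}_{z}(G_{\infty}\in du)\,\mathbf{P}^{-}(H_{\infty}\in dv_{1},H_{\infty,N}\in dv_{2})\,\mu_{\lambda}(dz).
\end{equation*}
For the $(n-r)$-part, $\hat{T}_{n-r}$ has the same joint law with the post-$r$ running minimum as $T_{n-r}$ has with $L_{n-r}$, so Lemma \ref{L_Condx} at $x=0$ gives $\mathbf{E}[\varphi_{2}(\hat{T}_{n-r});L_{n-r}\geq 0]/\mathbf{P}(L_{n-r}\geq 0)=\mathbf{E}[\varphi_{2}(T_{n-r})\mid L_{n-r}\geq 0]\to\mathbf{E}^{+}[\varphi_{2}(T_{\infty})]$. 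Multiplying the two limits reproduces the asserted integral against $\mathbf{P}^{+}_{z}\otimes\mathbf{P}^{-}\otimes\mathbf{P}^{+}\otimes\mu_{\lambda}$.

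The one point requiring care — and the main obstacle — is the joint limit for $(\tilde{H}_{r},\tilde{H}_{r,N})$: one must verify that these two reversed functionals, which read off overlapping windows of the environment near time $r$, converge jointly under $\mathbf{P}^{-}$ and that their limit is governed by the joint law $\mathbf{P}^{-}(H_{\infty}\in dv_{1},H_{\infty,N}\in dv_{2})$ appearing in the statement. This is exactly the content of the vector-valued form of Lemma 7.3 in \cite{GV2017}, Chapter 7; granting it, the remainder is a routine repetition of the proof of Lemma \ref{L_pr3New}.
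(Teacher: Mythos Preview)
Your proposal is correct and is precisely the argument the paper has in mind: the paper's own proof consists of the single sentence ``The proof is similar as the previous ones, and we do not provide it,'' and your write-up spells out exactly that similarity --- factor at time $r$ via $\{\tau(n)=r\}=\{\tau(r)=r\}\cap\{L'_{n-r}\ge 0\}$, apply the vector-valued form of Lemma~\ref{L_Newp42} (with $\psi\equiv 1$) to the pair $(\tilde H_r,\tilde H_{r,N})$ for the $r$-part, and use Lemma~\ref{L_Condx} at $x=0$ for the $(n-r)$-part, then extend from product test functions to general $\varphi$ by Weierstrass approximation. Your observation that marginal $\mathbf P^-$-a.s.\ convergence of $H_r$ and $H_{r,N}$ gives joint a.s.\ convergence of the pair, and that one may harmlessly enlarge the index window of $H_{r,N}$ to match that of $H_r$, is exactly what is needed to feed the pair into Lemma~7.3 of \cite{GV2017}.
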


\begin{proof}
 The proof is similar as the previous ones, and we do not provide it.
\end{proof}

\subsection{Some properties of driftless random walks}

In this subsection we consider a driftless random walk 
\begin{equation*}
S_{0}=0,\quad S_{k}=X_{1}+...+X_{k},\quad k \in \N
\end{equation*}%
satisfying the conditions%
\begin{equation}
\mathbf{E}X_{i}=0,\quad \sigma ^{2}=VarX_{i}\in \left( 0,\infty \right) .
\label{CondRW}
\end{equation}

\begin{lemma}
\label{L_simpleconditional}If condition (\ref{CondRW}) is valid then for
any $\lambda >0$ there exist positive constants $D_{i}=D_{i}(\lambda
),i=1,2,3$ such that, 
\begin{align}
\lim_{\min (r,n-r)\rightarrow \infty }r^{3/2}\left( n-r\right) ^{1/2}\mathbf{%
E}[e^{-\lambda S_{r}};L_{n} &\geq 0]=D_{1},  \label{D_1} \\
\lim_{\min (r,n-r)\rightarrow \infty }r^{3/2}\left( n-r\right) ^{1/2}\mathbf{%
E}[e^{\lambda S_{r}};M_{n} &<0]=D_{2},  \label{D_2} \\
\lim_{\min (r,n-r)\rightarrow \infty }r^{3/2}\left( n-r\right) ^{1/2}\mathbf{%
E}[e^{\lambda S_{r}};\tau (n) &=r]=D_{3}.  \label{D_3}
\end{align}
\end{lemma}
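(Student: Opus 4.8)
The common mechanism for the three relations is to cut the interval $[0,n]$ at the intermediate time $r$ and apply the Markov property of $\mathbf{S}$ there: each expectation then splits into a factor attached to $[0,r]$, carrying the $r^{-3/2}$ decay, and a factor attached to $[r,n]$, carrying the $(n-r)^{-1/2}$ decay. Throughout, $\mathbf{S}'$ denotes an independent copy of $\mathbf{S}$, with running extrema $L',M'$.

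I would begin with \eqref{D_3}, where the splitting is \emph{exact}. Since $\{\tau(n)=r\}=\{\tau(r)=r\}\cap\{S_{k}\geq S_{r},\ r<k\leq n\}$ and, conditionally on $\mathcal{F}_{r}$, the second event has probability $\mathbf{P}(L'_{n-r}\geq 0)$ not depending on $S_{r}$, we get
\begin{equation*}
\mathbf{E}[e^{\lambda S_{r}};\tau(n)=r]=\mathbf{E}[e^{\lambda S_{r}};\tau(r)=r]\,\mathbf{P}(L_{n-r}\geq 0)=\mathbf{E}[e^{\lambda S_{r}};M_{r}<0]\,\mathbf{P}(L_{n-r}\geq 0),
\end{equation*}
the last equality being the duality already used in \eqref{AsymConditional} (valid for any function of $S_{r}$ via the cyclic reversal $X_{j}\mapsto X_{r+1-j}$). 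By \eqref{Asym0} with $x=0$, $\sqrt{n-r}\,\mathbf{P}(L_{n-r}\geq 0)\to C_{3}$; and writing $\mathbf{E}[e^{\lambda S_{r}};M_{r}<0]=V(0)\,\mathbf{E}^{-}[e^{\lambda S_{r}}]$ it remains to show $r^{3/2}\mathbf{E}^{-}[e^{\lambda S_{r}}]\to\ell^{-}(\lambda)\in(0,\infty)$, a ``building block'' discussed below. This yields \eqref{D_3} with $D_{3}=C_{3}V(0)\ell^{-}(\lambda)$.

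For \eqref{D_1} the same cut gives, using $\{L_{n}\geq 0\}=\{L_{r}\geq 0\}\cap\{\min_{r\leq k\leq n}S_{k}\geq 0\}$,
\begin{equation*}
\mathbf{E}[e^{-\lambda S_{r}};L_{n}\geq 0]=\mathbf{E}\!\left[e^{-\lambda S_{r}}\mathbf{1}_{\{L_{r}\geq 0\}}\,\mathbf{P}\!\left(L'_{n-r}\geq -S_{r}\mid S_{r}\right)\right],
\end{equation*}
but now the inner probability genuinely depends on $S_{r}$. By \eqref{Asym0}, $\sqrt{n-r}\,\mathbf{P}(L'_{n-r}\geq -x)\to C_{3}U(x)$ uniformly for $x=o(\sqrt{n-r})$, and \eqref{Asym1} gives the uniform bound $\sqrt{n-r}\,\mathbf{P}(L'_{n-r}\geq -x)\leq C_{2}U(x)\leq C(1+x)$ (here $U(x)=O(1+x)$ by the renewal theorem, since $\mathbf{E}X^{2}<\infty$). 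On $\{L_{r}\geq 0\}$ one has $S_{r}\geq 0$, so $e^{-\lambda S_{r}}\leq 1$ and the range $\{S_{r}>\varepsilon\sqrt{n-r}\}$ is negligible; dominated convergence reduces the claim to the building block $r^{3/2}\mathbf{E}[e^{-\lambda S_{r}}U(S_{r});L_{r}\geq 0]=r^{3/2}U(0)\mathbf{E}^{+}[e^{-\lambda S_{r}}]\to\ell^{+}(\lambda)\in(0,\infty)$, whence $D_{1}=C_{3}U(0)\ell^{+}(\lambda)$. The proof of \eqref{D_2} is dual: conditioning on $\mathcal{F}_{r}$ peels off a factor $\mathbf{P}(M'_{n-r}<-S_{r}\mid S_{r})$, controlled by the $-\mathbf{S}$-version of \eqref{Asym0}--\eqref{Asym1}, and the same scheme, now with the roles of $U,V$ and of $\mathbf{P}^{+},\mathbf{P}^{-}$ exchanged, reduces it to $r^{3/2}\mathbf{E}^{-}[e^{\lambda S_{r}}]\to\ell^{-}(\lambda)\in(0,\infty)$ (up to the weak/strict ladder convention, which is immaterial here).

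Two external inputs underlie the building blocks. First, the local limit theorem for random walks conditioned to stay non-negative (resp.\ negative): $\mathbf{P}(L_{r}\geq 0,\,S_{r}\in dx)\sim r^{-3/2}\bar{g}(x)\,dx$ for fixed $x$, with $\bar{g}$ nonnegative, positive on $(0,\infty)$, and of at most linear growth (see \cite{Don12} and the monograph \cite{GV2017}), together with a dominated version permitting integration; this gives $r^{3/2}\mathbf{E}[e^{-\lambda S_{r}}U(S_{r});L_{r}\geq 0]\to\int_{0}^{\infty}e^{-\lambda x}U(x)\bar{g}(x)\,dx$, a convergent positive integral because $U$ is at most linear and $\lambda>0$, and similarly for $\ell^{-}$. (Alternatively, the unweighted asymptotics $r^{3/2}\mathbf{E}[e^{\pm\lambda S_{r}};L_{r}\geq 0\ \text{or}\ M_{r}<0]\to\text{const}$ are obtained exactly as \eqref{AsymConditional}, from a Sparre--Andersen identity and Proposition 2.1 of \cite{ABKV}, and the renewal weight is reinstated by a ladder decomposition.) The genuine difficulty, however, is that the limit is \emph{joint}, $\min(r,n-r)\to\infty$: one cannot simply send $n-r\to\infty$ and then $r\to\infty$, because the $o(1)$ error produced by \eqref{Asym0} is multiplied by the diverging normalization $r^{3/2}$. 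The point is that, after the cut, this error is bounded by $C\,\zeta_{n-r}\,r^{3/2}\mathbf{E}^{+}[e^{-\lambda S_{r}/2}]$ with $\zeta_{m}\to 0$ (combining the $o(1)$ in \eqref{Asym0} on $\{S_{r}\leq\varepsilon_{n-r}\sqrt{n-r}\}$ with the exponential cut-off on the complement), and $r^{3/2}\mathbf{E}^{+}[e^{-\lambda S_{r}/2}]$ stays bounded in $r$ — by the same local limit theorem, or even by an \eqref{AsymConditional}-type upper bound — so this term vanishes as $\min(r,n-r)\to\infty$. Once this uniformity is secured, the three assertions follow, with $D_{1},D_{2},D_{3}$ as above.
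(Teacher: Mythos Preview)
Your proposal is correct and follows essentially the same approach as the paper: cut at time $r$ via the Markov property, replace the tail factor $\mathbf{P}(L'_{n-r}\geq -S_r)$ by $U(S_r)\mathbf{P}(L_{n-r}\geq 0)$ using \eqref{Asym0}--\eqref{Asym1}, and then invoke a local limit theorem for conditioned walks (the paper uses Theorem~4 of \cite{VW07}, you cite \cite{Don12}/\cite{GV2017}) to obtain the $r^{-3/2}$ asymptotic of $\mathbf{E}[e^{-\lambda S_r}U(S_r);L_r\geq 0]$; for \eqref{D_3} both you and the paper use the exact factorization together with \eqref{AsymConditional} and \eqref{Asym0}. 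The only cosmetic difference is that the paper handles the joint limit via an explicit truncation $\Psi_N(S_r)$ (letting $N\to\infty$ at the end) rather than your direct dominated-convergence argument, but the underlying mechanism is identical.
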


\begin{proof} Let us prove relation (\ref{D_1}). For a fixed $r\in N$, let 
$S_{j}^{\prime }:=S_{j+r}-S_{r},j=0,1,2,...$ and $%
L_{m}^{\prime }:=\min_{0\leq j\leq m}S_{j}^{\prime }.$ From the Markov property,
\begin{equation*}
\mathbf{E}[e^{-\lambda S_{r}};L_{n}\geq 0]\ =\mathbf{E}[e^{-\lambda S_{r}}%
\mathbf{P}\left( L_{n-r}^{\prime }\geq -S_{r}|S_{r}\right) ;L_{r}\geq 0]\ .
\end{equation*}%

Recall the definition of $\Psi_N$ in \eqref{defPsiN}.
Using \eqref{Asym0} and observing
that $U(x)e^{-\theta x},x\geq 0$ is bounded for any $\theta >0$ we conclude
by Lemma \ref{L_Newp41} with $\varphi (x)\equiv 1$ that, for any $\lambda >0$
as $\min (r,n-r)\rightarrow \infty $ 
\begin{multline*}
\mathbf{E}\left[ e^{-\lambda S_{r}}\mathbf{P}\left( L_{n-r}^{\prime }\geq
-S_{r}|S_{r}\right) \Psi _{N}(S_{r});L_{r}\geq 0\right]  \\
\sim \mathbf{E}[e^{-\lambda S_{r}}U(S_{r})\Psi _{N}(S_{r});L_{r}\geq 0]%
\mathbf{P}\left( L_{n-r}\geq 0\right)  \\
\sim \mathbf{E}[e^{-\lambda S_{r}};L_{r}\geq 0]\mathbf{P}\left(
L_{n-r}\geq 0\right) \times \int U(-z)\Psi _{N}(-z)v_{\lambda }(dz).
\end{multline*}
Note further that 
\begin{align*}
\mathbf{E}[e^{-\lambda S_{r}}U(S_{r});S_{r} \leq N,L_{r}\geq 0]&\leq 
\mathbf{E}[e^{-\lambda S_{r}}U(S_{r})\Psi _{N}(S_{r});L_{r}\geq 0] \\
&\leq \mathbf{E}[e^{-\lambda S_{r}}U(S_{r});S_{r}\leq N+1;L_{r}\geq 0].
\end{align*}%
According to Theorem 4 in \cite{VW07} there exists a constant $C>0$ such
that for any $\Delta >0$, as $r\rightarrow \infty $ 
\begin{equation*}
\mathbf{P}\left( S_{r}\in (x,x+\Delta ];L_{r}\geq 0\right) \sim \frac{C}{%
r^{3/2}}\int_{x}^{x+\Delta }V(-w)dw
\end{equation*}%
uniformly in $0\leq x\ll \sqrt{r}$. Hence it follows that, as $r\rightarrow
\infty $%
\begin{equation*}
\mathbf{E}[e^{-\lambda S_{r}}U(S_{r});S_{r}\leq N;L_{r}\geq 0]\sim \frac{C}{%
r^{3/2}}\int_{0}^{N}e^{-\lambda w}V(-w)U(w)dw.
\end{equation*}%
Since 
\begin{equation*}
\int_{0}^{\infty }e^{-\lambda w}V(-w)U(w)dw<\infty ,
\end{equation*}%
we conclude that, as $\min \left( r,n-r\right) \rightarrow \infty $%
\begin{align}
\mathbf{E}\left[ e^{-\lambda S_{r}}\mathbf{P}\left( L_{n-r}^{\prime }\geq
-S_{r}|S_{r}\right) ;L_{r}\geq 0\right]  
&\sim \frac{C_{1}}{r^{3/2}\left( n-r\right) ^{1/2}}\int_{0}^{\infty
}e^{-\lambda w}V(-w)U(w)dw  \notag \\
&=\frac{C_{1}}{r^{3/2}\left( n-r\right) ^{1/2}}\int_{0}^{\infty }V(-w)\mu
_{\lambda }(dw).  \label{Asympy_ExpS_j}
\end{align}%
This proves (\ref{D_1}). 

Equality (\ref{D_2}) derives from similar arguments. 

Relation (\ref{D_3})\ follows from the equality%
\begin{equation*}
\mathbf{E}[e^{\lambda S_{r}};\tau (n)=r]=\mathbf{E}[e^{\lambda S_{r}};\tau
(r)=r]\mathbf{P}\left( L_{n-r}\geq 0\right) 
\end{equation*}%
and estimates (\ref{AsymConditional}) and (\ref{Asym0}) with $x=0$.

This ends the proof of Lemma \ref{L_simpleconditional}
\end{proof}

For a fixed positive integer $N\leq \min (j/2,n-j)$ set 
\begin{equation*}
\mathcal{K}_{1}=\mathcal{K}_{1}(j,N):=\left\{ k\in \left[ N,j-N\right]
\right\} ,\ \mathcal{K}_{2}=\mathcal{K}_{2}(j,n,N):=\left\{ l\in \left[ j+N,n%
\right] \right\} .
\end{equation*}

The two next lemmas quantify the expectation of some exponential functionals of the random walk 
$\mathbf{S}$ when $n$ is large and $\tau(n)$ belongs to $\mathcal{K}_{1}$ or $\mathcal{K}_{2}$.
These results will be needed in the proof of \eqref{AsymInterm}.

\begin{lemma}
\label{L_Negl} If conditions (\ref{CondRW}) are valid then for every $%
\varepsilon >0$ there exists $N=N(\varepsilon )$ such that 
\begin{equation*}
\mathbf{E}\left[ e^{-S_{j}}e^{S_{\tau (j-1)}}e^{S_{\tau (n)}};\tau (n)\in 
\mathcal{K}_{2}\right] \leq \frac{\varepsilon }{j^{3/2}\sqrt{n-j}}
\end{equation*}%
for all $j\geq j_{0}=j_{0}\left( \varepsilon \right) ,n\geq
n_{0}=n_{0}\left( \varepsilon \right) $.
\end{lemma}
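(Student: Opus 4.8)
The plan is to split the sum over $\tau(n)=l$ with $l\in\mathcal{K}_2=[j+N,n]$ according to the value of $l$ and to exploit the fact that, on the event $\{\tau(n)=l\}$, the walk splits into three independent pieces by the strong Markov property. Writing $\tau(n)=l$ forces $S_0,\dots,S_{j-1},\dots,S_l$ to attain their global minimum at time $l$, and it also forces $\tau(j-1)=\tau(j-1)$ to be some index in $[0,j-1]$. First I would decompose
\begin{align*}
\mathbf{E}\left[e^{-S_j}e^{S_{\tau(j-1)}}e^{S_{\tau(n)}};\tau(n)\in\mathcal{K}_2\right]
&=\sum_{l=j+N}^{n}\sum_{m=0}^{j-1}\mathbf{E}\left[e^{-S_j}e^{S_m}e^{S_l};\tau(j-1)=m,\ \tau(n)=l\right].
\end{align*}
On $\{\tau(n)=l\}$ with $l\ge j$, write $S_l$ in terms of the increments after time $j$, and use $e^{-S_j}e^{S_l}=e^{-(S_j-S_l)}=e^{S_l-S_j}$ together with $S_l-S_j\le 0$ (since $S_l$ is the running minimum on $[0,n]$), so this factor is bounded by $1$; more precisely I would keep the exponential to gain summability. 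The event factorises: on $[0,j-1]$ we need $\tau(j-1)=m$, on the block $[j-1,l]$ the walk must descend strictly below its past minimum and reach a new minimum at $l$, and on $[l,n]$ the walk must stay above the level $S_l$.

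The key estimates are the conditional-walk asymptotics already recorded in the excerpt. For the middle block one uses $\mathbf{E}[e^{\alpha S_k};\tau(k)=k]=\mathbf{E}[e^{\alpha S_k};M_k<0]\sim K/k^{3/2}$ from \eqref{AsymConditional} (after a duality/reversal to turn the ``new minimum at the right end'' into ``maximum negative''), and for the last block one uses $\mathbf{P}(L_{n-l}\ge 0)\sim C_3/\sqrt{n-l}$ from \eqref{Asym0}. For the first block, $\mathbf{E}[e^{S_m};\tau(j-1)=m]\le\mathbf{E}[e^{S_m};\tau(m)=m]\le C/(m+1)^{3/2}$ and summing over $m\le j-1$ gives a constant, so that part contributes an $O(1)$ factor and does not affect the order. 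Putting the three blocks together, the generic term is of order
$$
\frac{1}{j^{3/2}}\cdot\frac{1}{(l-j+1)^{3/2}}\cdot\frac{1}{\sqrt{n-l+1}}\,,
$$
uniformly, and summing over $l\in[j+N,n]$ one bounds $\sum_{l\ge j+N}(l-j+1)^{-3/2}(n-l+1)^{-1/2}\le \big(\sum_{p\ge N}p^{-3/2}\big)\big(\sup_l (n-l+1)^{-1/2}\cdot\sqrt{n-j}\big)\cdot\frac{1}{\sqrt{n-j}}$; a standard splitting of the convolution sum (terms with $l-j\le (n-j)/2$ versus $l-j>(n-j)/2$) shows it is bounded by $C\varepsilon_N/\sqrt{n-j}$ where $\varepsilon_N=\sum_{p\ge N}p^{-3/2}\to 0$. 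Hence the whole expectation is at most $C\varepsilon_N\,j^{-3/2}(n-j)^{-1/2}$, and choosing $N=N(\varepsilon)$ so that $C\varepsilon_N<\varepsilon$ finishes the proof.

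I expect the main obstacle to be making the three-way factorisation of $\{\tau(j-1)=m,\ \tau(n)=l\}$ fully rigorous and uniform: one must be careful that ``$S$ attains a strictly lower new minimum on $[j-1,l]$ and this minimum is at $l$'' is exactly the dual of the event $\{M_{l-j+1}<0\}$ for a reversed walk started afresh, and that the boundary level seen by the $[l,n]$-block is nonpositive so that $\mathbf{P}(L_{n-l}\ge -|S_l|)$ can be bounded by $\mathbf{P}(L_{n-l}\ge 0)$ times $U(|S_l|)$ via \eqref{Asym1}, with the extra $U(|S_l|)$ absorbed by the exponential weight $e^{S_l-S_j}\le e^{-|S_l-S_j|}$ coming from the middle block (here one uses $U(x)e^{-\theta x}$ bounded). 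Keeping all constants independent of $j,n,l$ while performing the reversal and the dominated-convergence-type passages is the delicate bookkeeping; once that uniformity is in place, the convolution sum estimate is routine and yields the claimed $\varepsilon/(j^{3/2}\sqrt{n-j})$ bound for all $j\ge j_0(\varepsilon)$, $n\ge n_0(\varepsilon)$.
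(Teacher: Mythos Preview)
Your decomposition has a genuine gap: the $j^{-3/2}$ factor in your ``generic term'' is never produced by the argument you describe. You say the first block $[0,j-1]$ contributes only $O(1)$ after summing over $m$, and the middle block $[j,l]$ contributes $(l-j+1)^{-3/2}$; with the last block giving $(n-l+1)^{-1/2}$ this yields only $C\varepsilon_N/\sqrt{n-j}$, not $C\varepsilon_N/(j^{3/2}\sqrt{n-j})$. The reason the three-block picture cannot do better is that the blocks $[0,j-1]$ and $[j,l]$ are \emph{not} independent under $\{\tau(n)=l\}$: the event $\tau(l)=l$ forces $S_l\le S_r$ for every $r\in[0,j-1]$, so the descent on $[j,l]$ must undercut the running minimum of the first block. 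Dropping this coupling, as you do when you bound $\mathbf{E}[e^{S_m};\tau(j-1)=m]$ in isolation, throws away precisely the constraint that generates the extra $j^{-3/2}$ decay.

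The paper recovers this factor by reversing time on $[0,l]$, setting $S'_r=S_l-S_{l-r}$. Under reversal $\{\tau(l)=l\}$ becomes $\{M'_l<0\}$ and $\{\tau(j-1)=k\}$ becomes a last-maximum condition on the subinterval $[l-j+1,l]$. This splits the expectation into a \emph{four}-piece product: a factor $\mathbf{E}[e^{S_k};M_k<0]\le C(k+1)^{-3/2}$, a factor $\Theta_{j-k-1}(S'_{l-j})\le C(j-k)^{-3/2}\,|S'_{l-j}|V(S'_{l-j})$ coming from the segment of length $j-k-1$ (this is where the coupling lives, encoded as the local estimate $\mathbf{P}(L^*_{j-k-1}\ge 0,\ S^*_{j-k-1}<r)\le C r V(-r)/(j-k)^{3/2}$), a factor $\mathbf{E}[e^{S'_{l-j}/2};M'_{l-j}<0]\le C(l-j)^{-3/2}$ after absorbing the polynomial $|S'_{l-j}|V(S'_{l-j})$ with half the exponential, and finally $\mathbf{P}(L_{n-l}\ge 0)\le C(n-l+1)^{-1/2}$. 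The convolution $\sum_{k=0}^{j-1}(k+1)^{-3/2}(j-k)^{-3/2}\le C j^{-3/2}$ then delivers the missing decay. Your sketch contains no analogue of the $\Theta_{j-k-1}$ step; without it the argument cannot close. (Incidentally, your remark about $\mathbf{P}(L_{n-l}\ge -|S_l|)$ is off: after restarting at time $l$ the constraint is exactly $L'_{n-l}\ge 0$, with no boundary level, so there is no $U(|S_l|)$ to absorb there.)
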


\begin{proof} Set for the sake of readability
\begin{align*}
R(j,n) :&=\mathbf{E}\left[ e^{-S_{j}}e^{S_{\tau (j-1)}}e^{S_{\tau
(n)}};\tau (j-1)<\tau (n)\right]  \\
&=\sum_{k=0}^{j-1}\sum_{l=j}^{n}\mathbf{E}\left[ e^{-S_{j}}e^{S_{\tau
(j-1)}}e^{S_{\tau (n)}};\tau (j-1)=k,\tau (n)=l\right] .
\end{align*}%
Observe that for $l\geq j$%
\begin{equation*}
\mathbf{E}\left[ e^{-S_{j}}e^{S_{\tau (j-1)}}e^{S_{\tau (n)}};\tau
(j-1)=k,\tau (n)=l\right] =\mathbf{E}\left[ e^{S_{k}}e^{S_{l}-S_{j}};\tau
(j-1)=k,\tau (l)=l\right] \mathbf{P}\left( L_{n-l}\geq 0\right) .
\end{equation*}%
Put
\begin{equation*}
S_{r}^{\prime }=S_{l}-S_{l-r},\quad r=0,1,...,l
\end{equation*}%
and denote $\mu ^{\prime }(m_{1},m_{2})$ the last point of maximum of $%
\left\{ S_{r}^{\prime },r=0,1,...,l\right\} $ on the interval $%
m_{1},m_{1}+1,...,m_{2}$. Using the independence of $S_{l}^{\prime
}-S_{l-k}^{\prime }$ and $S_{1}^{\prime },...,S_{l-k}^{\prime }$ we obtain, as $k<l$,
\begin{align*}
\mathbf{E}&\left[ e^{S_{l}}e^{S_{k}-S_{l}}e^{S_{l}-S_{j}};\tau (j-1)=k,\tau
(l)=l\right]  \\
&=\mathbf{E}\left[ e^{S_{l}^{\prime }}e^{-S_{l-k}^{\prime
}}e^{S_{l-j}^{\prime }};\mu ^{\prime }(l-j+1,l)=l-k,M_{l}^{\prime }<0\right] 
\\
&=\mathbf{E}\left[ e^{S_{l}^{\prime }-S_{l-k}^{\prime }}e^{S_{l-j}^{\prime
}};\max_{l-j+1\leq r\leq l-k}(S_{r}^{\prime }-S_{l-k}^{\prime })\leq
0,\max_{l-k< r\leq l}(S_{r}^{\prime }-S_{l-k}^{\prime })<0,M_{l}^{\prime
}<0\right]  \\
&=\mathbf{E}\left[ e^{S_{l-j}^{\prime }};\max_{l-j+1\leq r\leq
l-k}(S_{r}^{\prime }-S_{l-k}^{\prime })\leq 0,M_{l-k}^{\prime }<0\right] 
\mathbf{E}\left[ e^{S_{k}^{\prime }};M_{k}^{^{\prime }}<0\right] ,
\end{align*}%
where $M'$ is defined as $M$ in \eqref{def_LM} for the random walk $\mathbf{S}'$.
Thus,%
\begin{equation*}
R(j,n)=\sum_{k=0}^{j-1}\mathbf{E}\left[ e^{S_{k}^{\prime }},M_{k}^{\prime }<0%
\right] \sum_{l=j}^{n}\mathbf{E}\left[ e^{S_{l-j}^{\prime }};\max_{l-j+1\leq
r\leq l-k}(S_{r}^{\prime }-S_{l-k}^{\prime })\leq 0,M_{l-k}^{\prime }<0%
\right] \mathbf{P}\left( L_{n-l}\geq 0\right) .
\end{equation*}

For $x<0$ set 
\begin{equation*}
\Theta _{n}(x):=\mathbf{P}\left( \mu ^{\ast }(n)=n;S_{n}^{\ast }<-x\right)
\end{equation*}%
where the sequence $\mathbf{
S}^{\ast}:=\left\{ S_{m}^{\ast },m \in \N_0\right\} $ is an
independent copy of $\mathbf{
S}^{\prime}=\left\{ S_{m}^{\prime },m \in \N_0\right\} $ and $\mu
^{\ast }(j-k)$ is the last point of maximum of the random walk $\mathbf{
S}^{\ast}$ up to time $j-k$. Using duality we have

\begin{equation*}
\Theta _{n}(x)=\mathbf{P}\left( L_{n}^{\ast }\geq 0,S_{n}^{\ast }<-x\right) .
\end{equation*}%
Thus, 
\begin{equation*}
\mathbf{E}\left[ e^{S_{l-j}^{\prime }};\max_{l-j+1\leq r\leq
l-k}(S_{r}^{\prime }-S_{l-k}^{\prime })\leq 0,M_{l-k}^{\prime }<0\right] =%
\mathbf{E}\left[ e^{S_{l-j}^{\prime }}\Theta _{j-k-1}(S_{l-j}^{\prime
});M_{l-j}^{\prime }<0\right]
\end{equation*}%
and 
\begin{equation*}
R(j,n)=\sum_{k=0}^{j-1}\mathbf{E}\left[ e^{S_{k}};M_{k}<0\right]
\sum_{l=j}^{n}\mathbf{E}\left[ e^{S_{l-j}^{\prime }}\Theta
_{j-k-1}(S_{l-j}^{\prime });M_{l-j}^{\prime }<0\right] \mathbf{P}\left(
L_{n-l}\geq 0\right) .
\end{equation*}

From Proposition 2.3 in \cite{ABKV} as well as the monotonicity of the function $V$, we obtain that there exists a constant $C_{1}$ such
that for $n$ and $r\geq 1$%
\begin{equation}
\Theta _{n}(-r)=\mathbf{P}\left( L_{n}\geq 0,S_{n}<r\right) \leq \frac{C_{1}%
}{n^{3/2}}rV(-r).  \label{Asym11}
\end{equation}%
Thus, 
\begin{equation*}
\mathbf{E}\left[ e^{S_{l-j}^{\prime }}\Theta _{j-k-1}(S_{l-j}^{\prime
});M_{l-j}^{\prime }<0\right] \leq \frac{C_{1}}{\left( j-k\right) ^{3/2}}%
\mathbf{E}\left[ e^{S_{l-j}^{\prime }}\left\vert S_{l-j}^{\prime
}\right\vert V(S_{l-j}^{\prime });M_{l-j}^{\prime }<0\right] .
\end{equation*}%
Since $V$ is a renewal function (see Lemma 4.1 in \cite{GV2017} for instance), there is a constant $C$ such that $%
V(-y)\leq C\left( y+1\right) $ for all $y\geq 0$ (see \cite{F08} Ch. XI for instance). This, in turn, implies
existence of a constant $C_{2}$ such that%
\begin{equation*}
e^{-y/2}yV(-y)\leq Ce^{-y/2}\left( y+1\right) ^{2}\leq C_{1}
\end{equation*}%
for all $y\geq 0$. Combining this estimate with (\ref{AsymConditional}) we
see that there are constants $C_{1},C_{2}$ and $C_{3}$ such that%
\begin{equation*}
\mathbf{E}\left[ e^{S_{l-j}^{\prime }}\left\vert S_{l-j}^{\prime
}\right\vert V(S_{l-j}^{\prime });M_{l-j}^{\prime }<0\right] \leq C_{1}%
\mathbf{E}\left[ e^{S_{l-j}^{\prime }/2};M_{l-j}^{\prime }<0\right] \leq 
\frac{C_{2}}{\left( l-j\right) ^{3/2}}
\end{equation*}%
and thus
\begin{equation*}
\mathbf{E}\left[ e^{S_{l-j}^{\prime }}\Theta _{j-k}(S_{l-j}^{\prime
});M_{l-j}^{\prime }<0\right] \leq \frac{C_{3}}{\left( j-k\right) ^{3/2}}%
\frac{1}{\left( l-j\right) ^{3/2}}.
\end{equation*}%
Now we have, using the previous estimates,
\begin{align*}
R:&=\mathbf{E}\left[ e^{-S_{j}}e^{S_{\tau (j-1)}}e^{S_{\tau (n)}};\tau
(n)\in \mathcal{K}_{2}\right]  \\
&=\sum_{k=0}^{j-1}\mathbf{E}\left[ e^{-S_{k}},L_{k}\geq 0\right] \sum_{l\in 
\mathcal{K}_{2}}\mathbf{E}\left[ e^{S_{l-j}^{\prime }}\Theta
_{j-k}(S_{l-j}^{\prime });M_{l-j}^{\prime }<0\right] \mathbf{P}\left(
L_{n-l}\geq 0\right)  \\
&\leq C_{3}\sum_{k=0}^{j-1}\mathbf{E}\left[ e^{-S_{k}},L_{k}\geq 0\right] 
\frac{1}{\left( j-k\right) ^{3/2}}\sum_{l\in \mathcal{K}_{2}}\frac{1}{\left(
l-j\right) ^{3/2}}\mathbf{P}\left( L_{n-l}\geq 0\right)  \\
&\leq C_{4}\sum_{k=0}^{j-1}\frac{1}{\left( k+1\right) ^{3/2}}\frac{1}{%
\left( j-k+1\right) ^{3/2}}\sum_{l\in \mathcal{K}_{2}}\frac{1}{\left(
l-j\right) ^{3/2}}\frac{1}{\left( n-l+1\right) ^{1/2}},
\end{align*}%
where we applied \eqref{AsymConditional} and \eqref{Asym1}.
Since%
\begin{equation*}
\sum_{k=0}^{j-1}\frac{1}{\left( k+1\right) ^{3/2}}\frac{1}{\left( j-k\right)
^{3/2}}\leq \frac{C}{j^{3/2}}
\end{equation*}%
and 
\begin{eqnarray*}
\sum_{l\in \mathcal{K}_{2}}\frac{1}{\left( l-j\right) ^{3/2}}\frac{1}{\left(
n-l+1\right) ^{1/2}} &=&\sum_{k=N}^{n-j}\frac{1}{k^{3/2}}\frac{1}{\left(
n-j-k+1\right) ^{1/2}} \\
&=&\left( \sum_{k=N}^{\left( n-j\right) /2}+\sum_{k=(n-j)/2+1}^{n-j}\right) 
\frac{1}{k^{3/2}}\frac{1}{\left( n-j-k+1\right) ^{1/2}} \\
&\leq &\frac{C}{\left( n-j\right) ^{1/2}}\sum_{k=N}^{\left( n-j\right) /2}%
\frac{1}{k^{3/2}}+\frac{C}{\left( n-j\right) ^{3/2}}\sum_{k=1}^{\left(
n-j\right) /2}\frac{1}{k^{1/2}} \\
&\leq &\frac{\varepsilon }{2\left( n-j\right) ^{1/2}}+O\left( \frac{1}{n-j}%
\right) \leq \frac{\varepsilon }{\left( n-j\right) ^{1/2}}
\end{eqnarray*}%
for sufficiently large $N=N(\varepsilon )$, the desired estimate follows.

This end the proof of the Lemma.
\end{proof}

\begin{lemma}
\label{L_Negl2} If conditions (\ref{CondRW}) are valid then for every $%
\varepsilon >0$ there exists $N=N(\varepsilon )$ such that 
\begin{equation*}
\mathbf{E}\left[ e^{-S_{j}}e^{S_{\tau (j-1)}}e^{S_{\tau (n)}};\tau (n)\in 
\mathcal{K}_{1}\right] \leq \frac{\varepsilon }{j^{3/2}\sqrt{n-j}}
\end{equation*}%
for all $j\geq j_{0}=j_{0}\left( \varepsilon \right) ,n\geq
n_{0}=n_{0}\left( \varepsilon \right) $.
\end{lemma}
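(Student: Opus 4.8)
The plan is to follow the scheme of the proof of Lemma~\ref{L_Negl}, taking advantage of a simplification proper to the range $\mathcal{K}_1$. Namely, if $\tau (n)=k\in [N,j-N]$, then the global minimum of $(S_0,\dots ,S_n)$ is attained at the index $k<j$, so that $S_k=L_n=L_{j-1}$ and $k$ is also the \emph{first} index at which $(S_0,\dots ,S_{j-1})$ reaches its minimum; hence $\tau (j-1)=\tau (n)=k$ on this event. Therefore $e^{S_{\tau (j-1)}}e^{S_{\tau (n)}}=e^{2S_k}$ and
\[
\mathbf{E}\left[ e^{-S_j}e^{S_{\tau (j-1)}}e^{S_{\tau (n)}};\tau (n)\in \mathcal{K}_1\right] =\sum_{k=N}^{j-N}\mathbf{E}\left[ e^{-S_j}e^{2S_k};\tau (n)=k\right] ,
\]
so that, in contrast to Lemma~\ref{L_Negl}, only a single sum appears.

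Next I would factorise each summand. Setting $S_m':=S_{k+m}-S_k$, one has $e^{-S_j}e^{2S_k}=e^{S_k}e^{-S_{j-k}'}$, while $\{\tau (n)=k\}$ is the intersection of the ``pre-$k$'' event $\{S_m>S_k,\ 0\le m<k\}$ with the ``post-$k$'' event $\{L_{n-k}'\ge 0\}$, and these depend on disjoint blocks of increments. Reversing the increments $X_1,\dots ,X_k$ (duality) turns the pre-$k$ event carrying the weight $e^{S_k}$ into $\{M_k<0\}$ with the same weight, whence
\[
\mathbf{E}\left[ e^{-S_j}e^{2S_k};\tau (n)=k\right] =\mathbf{E}\left[ e^{S_k};M_k<0\right] \,\mathbf{E}\left[ e^{-S_{j-k}'};L_{n-k}'\ge 0\right] ,
\]
where $\mathbf{S}'$ is an independent copy of $\mathbf{S}$ and $L'$ its running minimum. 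By \eqref{AsymConditional} the first factor is at most $Ck^{-3/2}$, uniformly in $k\ge 1$. For the second factor I would condition on the first $j-k$ increments of $\mathbf{S}'$ and apply the Markov property together with \eqref{Asym1}; this extracts a factor $C(n-j)^{-1/2}$ times $\mathbf{E}[e^{-S_{j-k}'}U(S_{j-k}');L_{j-k}'\ge 0]$. Using the linear growth $U(x)\le C(1+x)$ to get $e^{-x}U(x)\le Ce^{-x/2}$, and then the estimate $\mathbf{E}[e^{-S_r/2};L_r\ge 0]\le C(r+1)^{-3/2}$ (which follows from Theorem~4 in \cite{VW07} exactly as in the proof of Lemma~\ref{L_simpleconditional}), one obtains
\[
\mathbf{E}\left[ e^{-S_{j-k}'};L_{n-k}'\ge 0\right] \le \frac{C}{(j-k)^{3/2}(n-j)^{1/2}}.
\]

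Combining the two bounds and summing over $k$ gives
\[
\mathbf{E}\left[ e^{-S_j}e^{S_{\tau (j-1)}}e^{S_{\tau (n)}};\tau (n)\in \mathcal{K}_1\right] \le \frac{C}{(n-j)^{1/2}}\sum_{k=N}^{j-N}\frac{1}{k^{3/2}(j-k)^{3/2}}.
\]
Splitting the sum at $k=j/2$ and bounding $(j-k)^{-3/2}\le Cj^{-3/2}$ on $k\le j/2$ (and symmetrically on $k\ge j/2$), the sum is at most $Cj^{-3/2}\sum_{k\ge N}k^{-3/2}\le CN^{-1/2}j^{-3/2}$, so the whole expression is $\le CN^{-1/2}j^{-3/2}(n-j)^{-1/2}$, which is $\le \varepsilon \,j^{-3/2}(n-j)^{-1/2}$ as soon as $N=N(\varepsilon )$ is chosen large enough. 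I expect the only delicate point to be retaining the correct $(n-j)^{-1/2}$ decay in the bound for the second factor: the cruder estimate $\mathbf{E}[e^{-S_{j-k}'};L_{n-k}'\ge 0]\le C(j-k)^{-3/2}$ would lose this factor, and then the summation would no longer beat $j^{-3/2}(n-j)^{-1/2}$ for a fixed $N$; this is precisely why the combination of \eqref{Asym1} with the $r^{-3/2}$-estimate for the exponential functional of the walk constrained to stay nonnegative is essential. Everything else is a rerun of estimates already present in Lemmas~\ref{L_Negl} and \ref{L_simpleconditional}.
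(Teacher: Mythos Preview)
Your proof is correct and follows essentially the same route as the paper: the paper also observes that $\tau(j-1)=\tau(n)=k$ on $\{\tau(n)=k\}$ for $k\in\mathcal{K}_1$, factorises $\mathbf{E}[e^{S_k-S_j}e^{S_k};\tau(n)=k]=\mathbf{E}[e^{S_k};\tau(k)=k]\,\mathbf{E}[e^{-S_{j-k}};L_{n-k}\ge 0]$, bounds the second factor by $C(j-k)^{-3/2}(n-j)^{-1/2}$ via the Markov property, the renewal-function growth of $U$ and the $r^{-3/2}$ decay of $\mathbf{E}[e^{-S_r/2};L_r\ge 0]$, and then handles the resulting sum $\sum_{k=N}^{j-N}k^{-3/2}(j-k)^{-3/2}$ exactly as you do. Your explicit comment about why the cruder bound $\mathbf{E}[e^{-S_{j-k}'};L_{n-k}'\ge 0]\le C(j-k)^{-3/2}$ would be insufficient is a useful remark that the paper leaves implicit.
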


\begin{proof} Applying \eqref{AsymConditional} and \eqref{Asym1} we conclude
as before that for $m<n$ 
\begin{eqnarray*}
\mathbf{E}\left[ e^{-S_{m}};L_{n}\geq 0\right] &=&\mathbf{E}\left[ e^{-S_{m}}%
\mathbf{P}\left( L_{n-m}^{^{\prime }}\geq -S_{m}|S_{m}\right) ;L_{m}\geq 0%
\right] \\
&\leq &\frac{C}{\left( n-m\right) ^{1/2}}\mathbf{E}\left[
e^{-S_{m}}V(-S_{m});L_{m}\geq 0\right] \\
&\leq &\frac{C_{1}}{\left( n-m\right) ^{1/2}}\mathbf{E}\left[
e^{-S_{m}/2};L_{m}\geq 0\right] \leq \frac{C_{2}}{\left( n-m\right) ^{1/2}}%
\frac{1}{m^{3/2}}.
\end{eqnarray*}%
Therefore,%
\begin{eqnarray*}
\mathbf{E}\left[ e^{-S_{j}}e^{S_{\tau (j-1)}}e^{S_{\tau (n)}};\tau (n)\in 
\mathcal{K}_{1}\right] &=&\sum_{k=N}^{j-N}\mathbf{E}\left[
e^{S_{k}-S_{j}}e^{S_{k}};\tau (n)=k\right] \\
&=&\sum_{k=N}^{j-N}\mathbf{E}\left[ e^{S_{k}};\tau (k)=k\right] \mathbf{E}%
\left[ e^{-S_{j-k}};L_{n-k}\geq 0\right] \\
&\leq &\frac{C_{3}}{\left( n-j\right) ^{1/2}}\sum_{k=N}^{j-N}\frac{1}{k^{3/2}%
}\frac{1}{\left( j-k\right) ^{3/2}} \\
&\leq &\frac{C_{4}}{\left( n-j\right) ^{1/2}j^{3/2}}\sum_{k=N}^{\infty }%
\frac{1}{k^{3/2}}\leq \frac{\varepsilon }{\left( n-j\right) ^{1/2}j^{3/2}}
\end{eqnarray*}%
for sufficiently large $N=N(\varepsilon )$.
\end{proof}

\subsection{Proof of relation (\protect\ref{AsymInterm}).}

In order to prove (\ref{AsymInterm}) we use a more convenient representation
for $\mathbf{P}\left( \mathcal{A}_{i}(n)\right) $. 
Recalling \eqref{expr_PAin} and Corollary \ref{C_fractional}, we get

\begin{eqnarray*}
\mathbf{P}\left( \mathcal{A}_{i}(n)\right)  &=&\mathbf{E}\left[ \frac{a_{i}}{%
a_{n}+b_{n}-b_{i+1}}\frac{a_{n}}{a_{n}+b_{n}-b_{1}}\right]  
=\mathbf{E}\left[ \frac{e^{-S_{i}}}{\sum_{k=i+1}^{n}e^{-S_{k}}}\frac{%
e^{-S_{n}}}{\sum_{k=1}^{n}e^{-S_{k}}}\right]\\ &=&\mathbf{E}\left[ \frac{%
e^{S_{n}-S_{i}}}{\sum_{k=i+1}^{n}e^{S_{n}-S_{k}}}\frac{1}{%
\sum_{k=1}^{n}e^{S_{n}-S_{k}}}\right]  
=\mathbf{E}\left[ \frac{%
e^{S_{n-i}}}{\sum_{k=0}^{n-i-1}e^{S_{k}}}\frac{1}{%
\sum_{k=0}^{n-1}e^{S_{k}}}\right]  ,
\end{eqnarray*}
where we have used that $(X_1,...,X_n)$ is distributed as $(X_n,...,X_1)$.
Hence, if we introduce a new BPRE with i.i.d. probability generating functions 
\begin{equation*}
\bar{F}_{k}(s):=\frac{1}{1+m(\bar{F}_{k})(1-s)}=\frac{1}{1+e^{-X_{k}}(1-s)},\quad k \in \N,
\end{equation*}%
and $\bar{X}_k:=\log \bar{F}_k(1)=-X_k$ for generation $k$, we obtain from \eqref{Frac_F} that
\begin{equation} \mathbf{P}\left( \mathcal{A}_{i}(n)\right) = \mathbf{E}\left[ e^{-\bar{S}_{n-i}}\left( 1-\bar{F}_{0,n-i-1}(0)\right)
 \left( 1-\bar{F}_{0,n-1}(0)\right) \right] .\label{Exp_j}\end{equation}
For the sake of readability, we will write $j$ instead of $n-i$ in the remaining part of the proofs.
Let us introduce: 
\begin{equation*}
\mathcal{\bar{H}}_{j,n}:=e^{-\bar{S}_{j}}\left( 1-\bar{F}_{0,j-1}(0)\right)
\left( 1-\bar{F}_{0,n-1}(0)\right) ,
\end{equation*}%
put $\bar{S}_{k}:=-S_{k},k \in \N_0$ and let $\bar{\tau}(n):=\min \left\{
k\geq 0:\bar{S}_{k}=\bar{L}_{n}\right\} $ with $\bar{L}_{n}:=\min_{0\leq
r\leq n}\bar{S}_{r}$.

We first show that for large but fixed $N$ the quantities 
\begin{equation*}
\mathbf{E}\left[ \mathcal{\bar{H}}_{j,n};\bar{\tau}(n)\in \left[ N,j-N\right]
\right] 
\quad
\text{and}
\quad
\mathbf{E}\left[ \mathcal{\bar{H}}_{j,n};\bar{\tau}(n)\in \left[ j+N,n\right]
\right] 
\end{equation*}%
give, as $\min \left( j,n-j\right) \rightarrow \infty $ a negligible
contribution to $\mathbf{E}\left[ \mathcal{\bar{H}}_{j,n}\right] $ in
comparison with $j^{-3/2}\left( n-j\right) ^{-1/2}$ and then demonstrate
that $\mathbf{E}\left[ \mathcal{\bar{H}}_{j,n}\right] \sim Cj^{-3/2}\left(
n-j\right) ^{-1/2}$ for $C>0.$

\begin{lemma}
\label{L_neglLeftInterm}For any $\varepsilon >0$ there exists $%
N\left( \varepsilon \right) $ such that, for all $N\geq N(\varepsilon)$ and
all sufficiently large $j$ and $n-j$ 
\begin{equation*}
\mathbf{E}\left[ \mathcal{\bar{H}}_{j,n} ;\bar{\tau}(n)\in \mathcal{K}_{1}\cup \bar{\tau}%
(n)\in \mathcal{K}_{2}\right] \leq \frac{\varepsilon }{j^{3/2}\left(
n-j\right) ^{1/2}}.
\end{equation*}
\end{lemma}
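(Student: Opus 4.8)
The plan is to dominate $\mathcal{\bar{H}}_{j,n}$ pointwise by an exponential functional of the running minima of the driftless walk $\bar{S}=-S$, and then invoke Lemmas~\ref{L_Negl} and~\ref{L_Negl2}, which already provide exactly this type of estimate; the only genuinely new point is the treatment of the single boundary value $\bar\tau(n)=n$.

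First, applying \eqref{Frac_F} to the reversed environment (whose associated walk is $\bar{S}$) one has $1-\bar{F}_{0,m}(0)=\big(\sum_{r=0}^{m}e^{-\bar{S}_{r}}\big)^{-1}$, hence $\mathcal{\bar{H}}_{j,n}=e^{-\bar{S}_{j}}\big(\sum_{r=0}^{j-1}e^{-\bar{S}_{r}}\big)^{-1}\big(\sum_{r=0}^{n-1}e^{-\bar{S}_{r}}\big)^{-1}$. Bounding each denominator from below by its largest summand, $\sum_{r=0}^{m}e^{-\bar{S}_{r}}\ge e^{-\bar{S}_{\bar{\tau}(m)}}$, gives $\mathcal{\bar{H}}_{j,n}\le e^{-\bar{S}_{j}+\bar{S}_{\bar{\tau}(j-1)}+\bar{S}_{\bar{\tau}(n-1)}}$. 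Since $\bar{\tau}(n-1)=\bar{\tau}(n)$ on $\{\bar{\tau}(n)\le n-1\}$, since $\mathcal{K}_{1}\subseteq[0,n-1]$, and since $\{\bar{\tau}(n)\in\mathcal{K}_{2}\}$ differs from $\{\bar{\tau}(n)\in\mathcal{K}_{2},\bar{\tau}(n)<n\}$ only by the atom $\{\bar{\tau}(n)=n\}$, I would split
\begin{equation*}
\mathbf{E}\big[\mathcal{\bar{H}}_{j,n};\bar{\tau}(n)\in\mathcal{K}_{1}\cup\mathcal{K}_{2}\big]\le\mathbf{E}\big[e^{-\bar{S}_{j}+\bar{S}_{\bar{\tau}(j-1)}+\bar{S}_{\bar{\tau}(n)}};\bar{\tau}(n)\in\mathcal{K}_{1}\cup\mathcal{K}_{2}\big]+\mathbf{E}\big[\mathcal{\bar{H}}_{j,n};\bar{\tau}(n)=n\big].
\end{equation*}
The walk $\bar{S}$ is driftless with finite positive variance, so it satisfies \eqref{CondRW}, and by Hypothesis~A2 it has the exponential moments $\mathbf{E}[e^{\bar{X}}+e^{-\bar{X}}]<\infty$ underlying \eqref{AsymConditional}, \eqref{Asym1} and \eqref{Asym11}; thus Lemmas~\ref{L_Negl} and~\ref{L_Negl2} apply with $(\bar{S},\bar{\tau})$ in place of $(S,\tau)$. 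Applying Lemma~\ref{L_Negl} to the part $\bar{\tau}(n)\in\mathcal{K}_{2}$ and Lemma~\ref{L_Negl2} to the part $\bar{\tau}(n)\in\mathcal{K}_{1}$, I obtain, for a suitable $N=N(\varepsilon)$ and all large $j,n-j$, that the first term on the right is at most $\varepsilon/(2j^{3/2}\sqrt{n-j})$.

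It remains to estimate the boundary term $\mathbf{E}[\mathcal{\bar{H}}_{j,n};\bar{\tau}(n)=n]$, which cannot be made small by enlarging $N$ and so must be shown to be $o(j^{-3/2}(n-j)^{-1/2})$; here one must \emph{not} discard the survival factors. Restarting $\bar{S}$ at time $j$ ($\bar{S}'_{k}:=\bar{S}_{j+k}-\bar{S}_{j}$), one has $\sum_{r=0}^{n-1}e^{-\bar{S}_{r}}\ge e^{-\bar{S}_{j}}\sum_{k=0}^{n-j-1}e^{-\bar{S}'_{k}}$, so the factor $e^{-\bar{S}_{j}}$ cancels and $\mathcal{\bar{H}}_{j,n}\le(1-\bar{F}_{0,j-1}(0))(1-\bar{F}'_{0,n-j-1}(0))$, with $\bar{F}'$ built from the restarted environment. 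Under Hypothesis~A3, $\{\bar{\tau}(n)=n\}$ forces, up to a null set, $\bar{\tau}'(n-j)=n-j$ together with $\bar{S}'_{n-j}<c$, where $c:=\min_{0\le r\le j-1}\bar{S}_{r}-\bar{S}_{j}$. Conditioning on $\mathcal{F}_{j}$: for the restarted block, time-reversal gives $(1-\bar{F}'_{0,n-j-1}(0))=e^{\hat{S}_{n-j}}/\sum_{i=1}^{n-j}e^{\hat{S}_{i}}\le e^{\hat{S}_{n-j}-\hat{M}_{n-j}}$, and decomposing the reversed walk $\hat{S}$ at its maximum over $[1,n-j]$ and using \eqref{AsymConditional} (with various exponents $\lambda>0$) bounds the conditional expectation by $C(n-j)^{-3/2}e^{-c^{-}/2}$, where $c^{-}=\max(-c,0)$; then, since $(1-\bar{F}_{0,j-1}(0))\le e^{\bar{L}_{j-1}}$ and $c^{-}=(\bar{S}_{j-1}+\bar{X}_{j}-\bar{L}_{j-1})^{+}$, I integrate out $\bar{X}_{j}$ (using $\mathbf{E}[e^{-\bar{X}}]<\infty$) and decompose $\bar{S}$ at its minimum over $[0,j-1]$, again applying \eqref{AsymConditional}, to get $\mathbf{E}[(1-\bar{F}_{0,j-1}(0))e^{-c^{-}/2}]\le Cj^{-3/2}$. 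Altogether $\mathbf{E}[\mathcal{\bar{H}}_{j,n};\bar{\tau}(n)=n]\le Cj^{-3/2}(n-j)^{-3/2}\le\varepsilon/(2j^{3/2}\sqrt{n-j})$ for $n-j$ large, and adding the two estimates proves the lemma. The main obstacle I expect is precisely this last step: establishing the exponential-in-$c^{-}$ decay of the restarted-block contribution, since it is what allows the coupling between the two blocks to produce the first-block factor $j^{-3/2}$ rather than the naive $\mathbf{E}[1-\bar{F}_{0,j-1}(0)]\asymp j^{-1/2}$; everything else is routine and parallel to the proofs of Lemmas~\ref{L_Negl} and~\ref{L_Negl2}.
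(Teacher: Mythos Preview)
Your approach is correct and coincides with the paper's: dominate $\mathcal{\bar H}_{j,n}$ by $e^{-\bar S_j+\bar L_{j-1}+\bar L_{n-1}}$ and invoke Lemmas~\ref{L_Negl} and~\ref{L_Negl2}. The paper's three-line proof does not isolate the atom $\{\bar\tau(n)=n\}$ (it simply writes $e^{\bar S_{\bar\tau(n-1)}}$ and appeals to the two lemmas, which carry $e^{S_{\tau(n)}}$), so your separate treatment of this boundary case is extra care rather than a different route; the claimed estimate $C(n-j)^{-3/2}e^{-c^{-}/2}$ for the restarted block is indeed valid and follows, for instance, by decomposing at $l=\bar\tau'(m-1)$, splitting on $\{\bar S'_l\lessgtr c\}$, and using $\mathbf{P}(L_{p-1}\ge 0,\,S_p<-x)\le C e^{-x/2}p^{-3/2}$.
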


\begin{proof} We know that $1-\bar{F}_{0,m}(0)\leq e^{\min_{0\leq k\leq m}%
\bar{S}_{k}}=e^{\bar{S}_{\bar{\tau}(m)}}$. Thus, 
\begin{equation*}
e^{-\bar{S}_{j}}\left( 1-\bar{F}_{0,j-1}(0)\right) \left( 1-\bar{F}%
_{0,n-1}(0)\right) \leq e^{-\bar{S}_{j}}e^{\bar{S}_{\bar{\tau}(j-1)}}e^{\bar{%
S}_{\bar{\tau}(n-1)}}
\end{equation*}%
Lemma \ref{L_neglLeftInterm} thus follows from Lemmas \ref{L_Negl} and \ref%
{L_Negl2}.\end{proof}

We now have all the ingredients to conclude the proof of Theorem \ref{T_total}.\\

\begin{proof}[Proof of point 3) of Theorem \ref{T_total}]
We have the decomposition 
\begin{align*}
\mathbf{E}\left[ \mathcal{\bar{H}}_{j,n}\right]  =&\mathbf{E}\left[ 
\mathcal{\bar{H}}_{j,n};\bar{\tau}(n)\in \mathcal{K}_{1}\cup \bar{\tau}%
(n)\in \mathcal{K}_{2}\right] 
+\mathbf{E}\left[ \mathcal{\bar{H}}_{j,n};\bar{\tau}(n)<N\right]\\ &+\mathbf{E%
}\left[\mathcal{\bar{H}}_{j,n};\bar{\tau}(n)\in (j-N,j)\right]
+\mathbf{E}\left[ \mathcal{\bar{H}}_{j,n};\bar{\tau}(n)\in \lbrack j,j+N)%
\right] .
\end{align*}%
We know from the previous lemma that, for any $\varepsilon >0$ there
exists $N=N(\varepsilon )$ such that, given $\min \left( j,n-j\right) $ is
large, the first term in the right-hand side satisfies:

\begin{equation}
\mathbf{E}\left[ \mathcal{\bar{H}}_{j,n};\bar{\tau}(n)\in \mathcal{K}%
_{1}\cup \bar{\tau}(n)\in \mathcal{K}_{2}\right] \leq \frac{\varepsilon }{%
j^{3/2}\left( n-j\right) ^{1/2}}.  \label{Term0}
\end{equation}%
The remaining proof is splitted into several steps.

1) Let $k<N<j$. By conditioning on the trajectory until time $k$ we obtain
\begin{equation*}
\mathbf{E}\left[ \mathcal{\bar{H}}_{j,n};\bar{\tau}(n)=k\right] =\mathbf{E}%
\left[ e^{-\bar{S}_{k}}\Lambda _{j-k,n-k}\left( Z_{k1},Z_{k2}\right) ;\bar{%
\tau}(k)=k\right]
\end{equation*}%
where $Z_{k1},Z_{k2}$ are independent random variables conditionally on $(S_1,...,S_k)$, and distributed as $Z_{k}$, and 
\begin{equation*}
\Lambda _{j,n}\left( z_{1},z_{2}\right) =\mathbf{E}\left[ e^{-\bar{S}%
_{j}}\left( 1-\bar{F}_{0,j-1}^{z_{1}}(0)\right) \left( 1-\bar{F}%
_{0,n-1}^{z_{2}}(0)\right) ;\bar{L}_{n}\geq 0\right] .
\end{equation*}%
Set $t=\left[ j/2\right] $ and denote 
\begin{align*}
G_{j} &:=\sum_{r=0}^{t}e^{-\bar{S}_{r}},\ H_{j}:=\sum_{r=0}^{j-t}e^{\bar{S}_{r}},\ 
\tilde{H}_{j}:=\sum_{r=t+1}^{j-1}e^{\bar{S}_{j}-\bar{S}_{r}}, \\
T_{n-j} &:=\sum_{r=0}^{n-j}e^{-\bar{S}_{r}},\ \hat{T}_{n-j}:=%
\sum_{r=j}^{n-1}e^{\bar{S}_{j}-\bar{S}_{r}}.
\end{align*}
As $\min (j,n-j)\rightarrow \infty $,
\begin{eqnarray*}
G_{j} &\rightarrow &G_{\infty }:=\sum_{r=0}^{\infty }e^{-S_{r}}\quad \mathbf{%
P}^{+}\text{-a.s.,} \\
H_{j} &\rightarrow &H_{\infty }:=\sum_{r=0}^{\infty }e^{S_{r}}\;\mathbf{P}_x
^{-}\text{-a.s.,} \quad \forall x \leq 0 \\
T_{n-j} &\rightarrow &T_{\infty }:=\sum_{r=0}^{\infty }e^{-S_{r}}\quad 
\mathbf{P}_x^{+}\text{-a.s.}, \quad \forall x \geq 0.
\end{eqnarray*}%
In addition
\begin{equation*}
\Lambda _{j,n}\left( z_{1},z_{2}\right) =\mathbf{E}\left[ e^{-\bar{S}%
_{j}}\varphi (G_{j},\tilde{H}_{j},S_{j};\hat{T}_{n-j});\bar{L}_{n}\geq 0%
\right] ,
\end{equation*}%
where (recall (\ref{Exp_j}))%
\begin{equation*}
\varphi (u,v,z;t):=\left( 1-\left(1- \frac{1}{u+e^{-z}v}\right)
^{z_{1}}\right) \left( 1-\left(1- \frac{1}{u+e^{-z}(v+t)}\right)
^{z_{2}}\right)
\end{equation*}
is bounded as $u \geq 1$ and $v,z_1,z_2 \geq 0$ by definition.
We may thus apply Lemma \ref{L_pr4New} and obtain that 
\begin{equation*}
\lim_{\min (j,n-j)\rightarrow \infty }\frac{\Lambda _{j,n}\left(
z_{1},z_{2}\right) }{\mathbf{E}[e^{-\bar{S}_{j}};\bar{L}_{n}\geq 0]\ }%
:=\Lambda _{\infty }\left( z_{1},z_{2}\right)
\end{equation*}%
exists for each fixed pair $(z_{1},z_{2})$. Hence, invoking the dominated
convergence theorem
we conclude that for any $k<N$, as $%
\min \left( j,n-j\right) \rightarrow \infty $%
\begin{multline*}
\mathbf{E}\left[ e^{-\bar{S}_{k}}\Lambda _{j-k,n-k}\left(
Z_{k1},Z_{k2}\right) ;\bar{\tau}(k)=k\right]  \\
\sim \mathbf{E}\left[ e^{-\bar{S}_{k}}\Lambda _{\infty }\left(
Z_{k1},Z_{k2}\right) ;\bar{\tau}(k)=k\right] \mathbf{E}[e^{-\bar{S}_{j}};%
\bar{L}_{n}\geq 0] \\ = C(k)  \mathbf{E}[e^{-\bar{S}_{j}};%
\bar{L}_{n}\geq 0].
\end{multline*}
Applying \eqref{D_1} and summing over the $k$'s in $[0,N-1]$, give, as $\min (j,n-j) \to \infty$
\begin{eqnarray}
\mathbf{E}\left[ \mathcal{\bar{H}}_{j,n};\bar{\tau}(n)<N\right]&\sim &\frac{C_{1}}{j^{3/2}\left( n-j\right) ^{1/2}}\int_{0}^{\infty
}V(-w)\mu _{1}(dw).  \label{Term1}
\end{eqnarray}

2) Now we take $1\leq k<N$, set $t=\left[ j/2\right] $ and denote 
\begin{align*}
G_{j} &:=\sum_{r=0}^{t}e^{-\bar{S}_{r}},\ H_{j,k}:=\sum_{r=0}^{j-t-k}e^{\bar{%
S}_{r}},\ \tilde{H}_{j,k}:=\sum_{r=t+1}^{j-k-1}e^{\bar{S}_{j-k}-\bar{S}_{r}},
\\
T_{0,k} &:=\sum_{r=0}^{k-1}e^{-\bar{S}_{r}},\ \hat{T}_{j-k,j}:=%
\sum_{r=j-k}^{j-1}e^{\bar{S}_{j-k}-\bar{S}_{r}}, \\
T_{0,n-j+k} &:=\sum_{r=0}^{n-j+k-1}e^{-\bar{S}_{r}},\ \hat{T}%
_{j-k,n}:=\sum_{r=j-k}^{n-1}e^{\bar{S}_{j-k}-\bar{S}_{r}}.
\end{align*}%
As $\min (j,n-j)\rightarrow \infty $
$$
G_{j} \rightarrow G_{\infty }\quad \mathbf{P}_x^{+}\text{-a.s.,} \quad \forall x \geq 0 \quad 
H_{j,k} \rightarrow H_{\infty } \quad \mathbf{P}^{-}\text{-a.s.}\quad \text{and} \quad
T_{0,n-j+k} \rightarrow T_{\infty }\quad \mathbf{P}^{+}\text{-a.s.}
$$
Besides,%
\begin{align*}
\mathbf{E}&\left[ \mathcal{\bar{H}}_{j,n};\bar{\tau}(n)=j-k\right] \\
&=\mathbf{E}\left[ e^{\bar{S}_{j}}\frac{1}{\sum_{r=0}^{j-1}e^{\bar{S}_{j}-%
\bar{S}_{r}}}\frac{1}{\sum_{r=0}^{j-1}e^{\bar{S}_{j}-\bar{S}%
_{r}}+\sum_{r=j}^{n-1}e^{-(\bar{S}_{r}-\bar{S}_{j})}};\bar{\tau}(n)=j-k%
\right] \\
&=\mathbf{E}\left[ e^{\bar{S}_{j-k}}\frac{e^{\bar{S}_{j-k}-\bar{S}_{j}}}{e^{\bar{S}%
_{j-k}}G_{j}+\tilde{H}_{j,k}+\hat{T}_{j-k,j}}\frac{1}{e^{\bar{S}_{j-k}}G_{j}+%
\tilde{H}_{j,k}+\hat{T}_{j-k,n}};\bar{\tau}(n)=j-k\right] \\
&=\mathbf{E}\left[ e^{\bar{S}_{j-k}}\varphi (G_{j},\tilde{H}_{j,k},\bar{S}%
_{j-k};\hat{T}_{j-k,j},\hat{T}_{j-k,n},\bar{S}_{j-k}-\bar{S}_{j});\bar{\tau}(n)=j-k%
\right] .
\end{align*}%
Recalling Lemma \ref{L_pr6New} we see that, for each fixed $k$ there exists a
constant $J_{-k}\geq 0$ such that, as $\min (j,n-j)\rightarrow \infty $ 
\begin{equation}
\mathbf{E}\left[ \mathcal{\bar{H}}_{j,n};\bar{\tau}(n)=j-k\right] \sim J_{-k}%
\mathbf{E}[e^{\bar{S}_{j-k}};\tau (n)=j-k]\sim \frac{CJ_{-k}}{j^{3/2}\left(
n-j\right) ^{1/2}}.  \label{Term2}
\end{equation}

3) Now we take $0\leq k<N$, set $t=\left[ j/2\right] ,$ denote 
\begin{align*}
G_{j} &:=\sum_{r=0}^{t}e^{-\bar{S}_{r}},\quad H_{0,j-t}:=\sum_{r=0}^{j-t-2}e^{\bar{%
S}_{r}}, \quad  \tilde{H}_{t+1,j-1}:=\sum_{r=t+1}^{j-1}e^{\bar{S}_{j+k}-\bar{S}_{r}},
\\
H_{0,j+k-t} &:=\sum_{r=0}^{j+k-t-2}e^{\bar{S}_{r}},\quad \tilde{H}%
_{t+1,j+k-1}:=\sum_{r=t+1}^{j+k-1}e^{\bar{S}_{j+k}-\bar{S}_{r}}, \quad
\Delta _{k} :=\bar{S}_{k},\\ \tilde{\Delta}_{j,k}&:=\bar{S}_{j+k}-\bar{S}_{j}
\quad
T_{0,n-j+k} :=\sum_{r=0}^{n-j-k-1}e^{-\bar{S}_{r}},\quad \hat{T}%
_{j+k,n}:=\sum_{r=j+k}^{n-1}e^{\bar{S}_{j+k}-\bar{S}_{r}}
\end{align*}%
and evaluate%
\begin{eqnarray*}
&&\mathbf{E}\left[ \mathcal{\bar{H}}_{j,n};\bar{\tau}(n)=j+k\right]  \\
&=&\mathbf{E}\left[ e^{\bar{S}_{j+k}}\frac{1}{\sum_{r=0}^{j-1}e^{\bar{S}%
_{j+k}-\bar{S}_{r}}}\frac{e^{\bar{S}_{j+k}-\bar{S}_{j}}}{\sum_{r=0}^{j-1}e^{%
\bar{S}_{j+k}-\bar{S}_{r}}+\sum_{r=j}^{n-1}e^{-(\bar{S}_{r}-\bar{S}_{j+k})}};%
\bar{\tau}(n)=j+k\right]  \\
&=&\mathbf{E}\left[ e^{\bar{S}_{j+k}}\frac{1}{e^{\bar{S}_{j+k}}G_{j}+\tilde{H%
}_{t+1,j-1}}\frac{e^{\bar{S}_{j+k}-\bar{S}_{j}}}{e^{\bar{S}_{j+k}}G_{j}+%
\tilde{H}_{t+1,j+k-1}+\hat{T}_{j+k,n}};\bar{\tau}(n)=j+k\right]  \\
&=&\mathbf{E}\left[ e^{\bar{S}_{j-k}}\varphi (G_{j},\tilde{H}_{t+1,j-1},\bar{%
S}_{j+k};\tilde{H}_{t+1,j+k-1},\hat{T}_{j+k,n},\bar{S}_{j+k}-\bar{S}_{j});%
\bar{\tau}(n)=j+k\right] 
\end{eqnarray*}%
with evident meaning for $\varphi $. As $\min (j,n-j)\rightarrow
\infty $%
\begin{eqnarray*}
G_{j} &\rightarrow &G_{\infty }\quad \mathbf{P}_x^{+}\text{-a.s.,} \quad \forall x \geq 0, \\
\left( H_{0,j-t},H_{0,j+k-t},\Delta _{k}\right)  &\rightarrow &(H_{\infty
},H_{\infty },\Delta _{k})\;\;\mathbf{P}^{-}\text{-a.s.,}  \\
T_{0,n-j+k} &\rightarrow &T_{\infty }\quad \mathbf{P}^{+}\text{-a.s.}
\end{eqnarray*}%
Hence, using Lemma \ref{L_pr6New} and Equation \eqref{D_1} we get that, for each fixed $k$ there
exists a constant $J_{+k}\geq 0$ such that, as $\min (j,n-j)\rightarrow
\infty $%
\begin{equation}
\mathbf{E}\left[ \mathcal{\bar{H}}_{j,n};\bar{\tau}(n)=j+k\right] \sim J_{+k}%
\mathbf{E}[e^{\bar{S}_{j+k}};\tau (n)=j+k]\sim \frac{CJ_{+k}}{j^{3/2}\left(
n-j\right) ^{1/2}}.  \label{Term3}
\end{equation}

Combining (\ref{Term0})-(\ref{Term3}) shows that 
\begin{equation*}
\lim_{\min (j,n-j)\rightarrow \infty }j^{3/2}\left( n-j\right) ^{1/2}\mathbf{%
E}\left[ \mathcal{\bar{H}}_{j,n}\right] =K\in \left( 0,\infty \right) .
\end{equation*}%
Recalling the substitution $i\rightarrow n-j$ we conclude that

\begin{equation*}
\lim_{\min (i,n-i)\rightarrow \infty }i^{1/2}\left( n-i\right) ^{3/2}\mathbf{%
P}\left( \mathcal{A}_{i}(n)\right) =K\in (0,\infty ).
\end{equation*}%
This completes the proof of Theorem \ref{T_total}.
\end{proof}

\begin{acknow}
 This work is supported by the Russian Science Foundation under the grant
19-11-00111.
\end{acknow}

\end{document}